\theoremstyle{plain}
\newtheorem{theorem}{\bf Theorem}
\newtheorem{proposition}[theorem]{\bf Proposition}
\newtheorem{definition}[theorem]{\bf Definition}
\newtheorem{lemma}[theorem]{\bf Lemma}
\newtheorem{corollary}[theorem]{\bf Corollary}
\def\C{{\mathbb C}}
\def\R{{\mathbb R}}
\def\bif{\textup{bif}}
\title[Good height functions on quasi-projective varieties]{Good height functions on quasi-projective varieties: equidistribution and applications in dynamics}
\author{Thomas Gauthier}
\address{Laboratoire de Math\'ematiques d'Orsay, Universit\'e Paris-Saclay 
91405 Orsay Cedex, France}
\email{thomas.gauthier1@universite-paris-saclay.fr}
\thanks{The author is partially supported by the ANR grant Fatou ANR-17-CE40-0002-01.}
\begin{document}

\begin{abstract}
In the present article, we define a notion of \emph{good height functions} on quasi-projective varieties $V$ defined over number fields and prove an equidistribution theorem of small points for such height functions. Those good height functions are defined as limits of height functions associated with semi-positive adelic metrization on big and nef $\mathbb{Q}$-line bundles on projective models of $V$ satisfying mild assumptions.

Building on a recent work of the author and Vigny as well as on a classical estimate of Call and Silverman, and inspiring from recent works of K\"uhne and Yuan and Zhang, we deduce the equidistribution of generic sequence of preperiodic parameters for families of polarized endomorphisms with marked points.
\end{abstract}

\maketitle


\section*{Introduction}

Let $X$ be a projective variety defined over number field $\mathbb{K}$ and $L$ be an ample line bundle on $X$.
When $L$ is endowed with an adelic semi-positive continuous metric $\{\|\cdot\|_v\}_{v\in M_\mathbb{K}}$ with induced height function $h_{\bar{L}}:X(\bar{\mathbb{Q}})\to\mathbb{R}$, a fundamental result is the existence of a systematic equidistribution of small and generic sequences: if $\{x_n\}_n$ is a sequence of points of $X(\bar{\mathbb{Q}})$ such that $h_{\bar{L}}(x_n)\to h_{\bar{L}}(X)$ and if for any subvariety $Z\subset X$ defined over $\mathbb{K}$ there is $n_0\geq1$ such that the Galois orbit $\mathsf{O}(x_n)$ of $x_n$ is disjoint from $Z$ for all $n\geq n_0$, Yuan~\cite{yuan} proved that for any place $v\in M_\mathbb{K}$, we have 
\[\frac{1}{\deg(x_n)}\sum_{x\in\mathsf{O}(x_n)}\delta_x\rightarrow \frac{c_1(\bar{L})^{\dim X}_v}{\mathrm{vol}(L)},\]
in the weak sense of probability measures on the Berkovich analytic space $X_v^\mathrm{an}$.

\medskip

This result $-$ as well as previous existing results concerning the equidistribution of small points $-$ has shown many important implications in arithmetic geometry and dynamics. Historically, a first striking example is the proof by Ullmo~\cite{Ullmo-Bogomolov} and Zhang~\cite{Zhang-Bogomolov} of the Bogomolov conjecture. An emblematic example in dynamics is the following: let $f_t(z)=z^d+t$ for $(z,t)\in\mathbb{C}^2$ and pick any two complex numbers $a,b\in \mathbb{C}$. Baker and DeMarco~\cite{BD-unlikely} prove that the set of parameters $t\in\C$ such that $a$ and $b$ are both preperiodic points of $f_t$ is infinite if and only if $a^d=b^d$. Building on this work, they propose in~\cite{BD} a dynamical analogue of the Andr\'e-Oort conjecture. 
Let us mention that, relying also on Yuan's Theorem, Favre and the author~\cite{book-unlikely} recently proved this so-called Dynamical Andr\'e-Oort conjecture for curves of polynomials.

\medskip

When trying to prove this conjecture for general families of rational maps $f_t:\mathbb{P}^1\to\mathbb{P}^1$, this strategy fails for several reasons. Given such a family parametrized by a quasi-projective curve together with a marked point $a:S\to\mathbb{P}^1$ (viewed as a moving dynamical point), we still have a candidate height function. However, we don't even know whether this function is a Weil height associated with an $\mathbb{R}$-divisor. Worse, in some cases when we can build a metrized line bundle inducing this height function, the continuity of the metric fails~\cite{DeMarco-Okuyama} or the metric is not anymore adelic~\cite{DeMarco-Wang-Ye}. 

\medskip

In the present article, we introduce a notion of good height function on a quasi-projective variety defined over a number field and prove an equidistribution of small points for such heights, allowing us for example to prove a general equidistribution statement in families of polarized endomorphisms of projective varieties with marked points, which applies in particular in the above mentionned cases where Yuan's result does not apply.

\subsection*{Good height functions and equidistribution}

Let $V$ be a smooth quasi-projective variety defined over a number field $\mathbb{K}$ and place $v\in M_\mathbb{K}$ and let $h:V(\bar{\mathbb{Q}})\to\mathbb{R}$ be a function. A sequence $(F_i)_i$ of Galois-invariant finite subsets of $V(\bar{\mathbb{Q}})$ is $h$-\emph{small} if
\begin{center}
$h(F_i):=\frac{1}{\# F_i}\sum_{x\in F_i}h(x)\to0$, as $i\to\infty$.
\end{center}

\begin{definition}\normalfont
We say $h$ is a \emph{good height} at $v$ if for any $n\geq0$, there is a projective model $X_n$ of $V$ together with a birational morphism $\psi_n:X_n\to X_0$ which is an isomorphism above $V$ and  a big and nef $\mathbb{Q}$-line bundle $L_n$ on $X_n$ endowed with an adelic semi-positive continuous metrization $\bar{L}_n$,
such that the following holds :
\begin{enumerate}
\item[$(1)$] For any generic $h$-small sequence $(F_i)_i$ of Galois-invariant finite subsets of $V(\bar{\mathbb{Q}})$, the sequence
$\varepsilon_n(\{F_i\}_i):=\limsup_ih_{\bar{L}_n}(\psi_n^{-1}(F_i))-h_{\bar{L}_n}(X_n)$ satisfies $\varepsilon_n(\{F_i\})\to0$ as $n\to\infty$,
\item[$(2)$] the sequence of volumes $\mathrm{vol}(L_n)$ converges to $\mathrm{vol}(h)>0$ as $n\to\infty$ and if $c_1(\bar{L}_n)_v$ is the curvature form of $\bar{L}_n$ on $X_{n,v}^{\mathrm{an}}$, then the sequence of finite measures $\left(\mathrm{vol}(L_n)^{-1}(\psi_n)_*c_1(\bar{L}_n)_v^k\right)_n$ converges weakly on $V_v^{\mathrm{an}}$ to a probability measure $\mu_v$,
\item[$(3)$] If $k:=\dim V>1$, for any ample line bundle $M_0$ on $X_0$ and any adelic semi-positive continuous metrization $\bar{M}_0$ on $M_0$, there is a constant $C\geq0$ such that 
\[\left(\psi_n^*(\bar{M}_0)\right)^j\cdot \left(\bar{L}_n\right)^{k+1-j}\leq C,\]
for any $2\leq j\leq k+1$ and any $n\geq0$.
\end{enumerate}
We say that $\mathrm{vol}(h)$ is the \emph{volume} of $h$ and that $\mu_v$ is the measure \emph{induced by $h$} at the place $v$.
We finally say $h$ is a \emph{good height} if it is $v$-good for all $v\in M_\mathbb{K}$. In this case, we say $\{\mu_v\}_{v\in M_\mathbb{K}}$ is the \emph{global measure} induced by $h$.
\end{definition}

 We prove here the following general equidistribution result.
 
\begin{theorem}[Equidistribution of small points]\label{tm:equidistrib}
Let $V$ be a smooth quasiprojective variety defined over a number field $\mathbb{K}$, let $v\in M_\mathbb{K}$ and let $h$ be a $v$-good height on $V$ with induced measure $\mu_v$. For any $h$-small sequence $(F_m)_m$ of Galois-invariant finite subsets of $V(\bar{\mathbb{Q}})$ such that for any hypersurface $H\subset V$ defined over $\mathbb{K}$, we have
\[\# (F_n\cap H)=o(\# F_n), \quad \text{as} \ n\to+\infty,\]
the probability measure $\mu_{F_m,v}$ on $V_v^\mathrm{an}$ which is  equidistributed on $F_m$ converges to $\mu_v$ in the weak sense of measures, i.e. for any continuous function with compact support $\varphi\in\mathscr{C}^0_c(V_v^\mathrm{an})$, we have
\[\lim_{m\to\infty}\frac{1}{\# F_m}\sum_{y\in F_m}\varphi(y)=\int_{V_v^\mathrm{an}}\varphi\,\mu_v.\]
\end{theorem}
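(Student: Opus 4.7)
The plan is to apply Yuan's variational principle on each projective model $X_n$ and then let $n\to\infty$, using conditions~(1)--(4) to control all errors. Fix $\varphi\in\mathscr{C}^0_c(V_v^{\mathrm{an}})$ and set $k=\dim V$. Since $\psi_n$ is an isomorphism above $V$, the extension $\tilde\varphi_n$ of $\varphi$ by zero is continuous on $X_{n,v}^{\mathrm{an}}$. By applying the same argument to $-\varphi$, it suffices to prove $\liminf_m\int\varphi\,d\mu_{F_m,v}\geq\int\varphi\,d\mu_v$.

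For $|t|$ small, I would perturb the $v$-adic metric of $\bar L_n$ by the multiplicative factor $e^{-t\tilde\varphi_n}$; call the resulting adelic metrization $\bar L_n(t)$. Setting $G_m:=\psi_n^{-1}(F_m)\subset\psi_n^{-1}(V)(\bar{\mathbb{Q}})$, a direct computation gives the linear identity
\begin{equation*}
h_{\bar L_n(t)}(G_m)=h_{\bar L_n}(G_m)+t\int_{V_v^{\mathrm{an}}}\varphi\,d\mu_{F_m,v}.
\end{equation*}
After adding $s\cdot\psi_n^*\bar M_0$ for an auxiliary ample semi-positively metrized $\bar M_0$ on $X_0$ whose $v$-adic curvature dominates $dd^c\tilde\varphi_n$, the perturbed metrization on the big and nef bundle $\bar L_n(t)+s\psi_n^*\bar M_0$ is semi-positive; the Yuan-Zhang-K\"uhne essential minimum inequality (applicable since the hypersurface-density hypothesis on $(F_m)$ lifts to $(G_m)$ on $X_n$, the $G_m$ avoiding the exceptional locus entirely) combined with the arithmetic Hilbert-Samuel theorem and the limit $s\to 0$ yields
\begin{equation*}
\liminf_m h_{\bar L_n(t)}(G_m)\geq h_{\bar L_n(t)}(X_n)\geq\frac{\bar L_n(t)^{k+1}}{(k+1)\,\mathrm{vol}(L_n)}.
\end{equation*}

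Next, I expand
\begin{equation*}
\bar L_n(t)^{k+1}=\bar L_n^{k+1}+t(k+1)\!\int_{X_{n,v}^{\mathrm{an}}}\!\tilde\varphi_n\,c_1(\bar L_n)_v^k+\sum_{j=2}^{k+1}\binom{k+1}{j}t^j R_{n,j}(\varphi),
\end{equation*}
where each $R_{n,j}(\varphi)$ is an arithmetic intersection of $\bar L_n^{k+1-j}$ with $j$ copies of the Green datum of $\tilde\varphi_n$. A Cauchy-Schwarz-type comparison using the $\bar M_0$-domination of $\tilde\varphi_n$ together with condition~(4) gives $|R_{n,j}(\varphi)|\leq C(\varphi)$ uniformly in $n$. (For $k=1$ condition~(4) is absent: the unique $j=2$ term is directly controlled by the Dirichlet energy of $\tilde\varphi_n$, bounded by $\|\varphi\|_\infty^2\,\mathrm{vol}(L_n)$.) Using $h_{\bar L_n}(X_n)=\bar L_n^{k+1}/((k+1)\,\mathrm{vol}(L_n))$ together with the bound $\limsup_m h_{\bar L_n}(G_m)\leq h_{\bar L_n}(X_n)+\varepsilon_n$ coming from condition~(1), subtracting $h_{\bar L_n}(X_n)$ and dividing by $t>0$ yields
\begin{equation*}
\liminf_m\int\varphi\,d\mu_{F_m,v}\geq\frac{1}{\mathrm{vol}(L_n)}\!\int_{V_v^{\mathrm{an}}}\!\varphi\,d\bigl((\psi_n)_*c_1(\bar L_n)_v^k\bigr)-\frac{\varepsilon_n}{t}-C'(\varphi)\,t.
\end{equation*}

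Finally, letting $n\to\infty$, conditions~(2) and~(3) force the first term on the right to tend to $\int\varphi\,d\mu_v$, while $\varepsilon_n/t\to 0$ for each fixed $t>0$; sending then $t\to 0^+$ gives the desired liminf inequality. The principal obstacles are twofold: \emph{first}, reconciling condition~(1)---formulated for strictly generic $h$-small sequences---with the weaker hypersurface-density hypothesis of the theorem, which requires either the Yuan-Zhang-K\"uhne refinement of Zhang's essential minimum inequality or an approximation of $(F_m)$ by a strictly generic subsequence obtained by deleting $o(\#F_m)$ points lying on countably many problematic hypersurfaces; \emph{second}, ensuring uniform-in-$n$ control of the higher-order arithmetic intersections $R_{n,j}$ across the \emph{varying} projective models $X_n$, which is precisely the purpose of condition~(4).
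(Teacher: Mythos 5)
Your overall variational architecture matches the paper's: perturb the metric on $\bar L_n$ by $t\varphi$, lower-bound the perturbed height of the small generic sequence by the first-order (in $t$) expansion of the normalized arithmetic self-intersection, divide by $t$, and then let $n\to\infty$ before $t\to 0^+$, exactly as in the paper. You also correctly identify the two bottlenecks and their resolutions (the paper handles the density-vs-genericity mismatch via Lemma~\ref{lm:genericlike}, and bounds the higher-order terms $R_{n,j}$ using condition~(4), the quantitative Lemma~\ref{lm:bounded-volume}, and the explicit expansion in Proposition~\ref{prop:almost-Zhang-quantitative}).

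The gap is in the intermediate step where you establish $\liminf_m h_{\bar L_n(t)}(G_m)\geq h_{\bar L_n(t)}(X_n)$ by adding $s\,\psi_n^*\bar M_0$ to restore semi-positivity, applying Zhang's essential-minimum inequality to the resulting ample class, and then taking $s\to 0$. The heights involved satisfy $h_{\bar L_n(t)+s\psi_n^*\bar M_0}(G_m)=h_{\bar L_n(t)}(G_m)+s\,h_{\psi_n^*\bar M_0}(G_m)$, so to recover the inequality for $\bar L_n(t)$ you would need $\limsup_m h_{\psi_n^*\bar M_0}(F_m)<\infty$. That bound is \emph{not} a consequence of the good-height axioms: $L_n$ being big and nef with $\mathrm{vol}(L_n)$ bounded does not bound any fixed ample Weil height on $X_0$ along a merely $h$-small sequence (this is precisely the pathology the quasi-projective setting allows: the $F_m$ may escape every Northcott-finite region of $X_0$). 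The paper avoids this trap by never invoking Zhang's inequality for the twisted bundle: instead Lemma~\ref{lm:Minkowski} applies the adelic Minkowski/Riemann--Roch argument directly to the $\chi$-volume $\widehat{\mathrm{vol}}_\chi(\bar L_n(t\varphi_n))$ of the \emph{non}-semi-positive perturbation, which requires only genericity (to dodge $\mathrm{div}(s)$) and no height bound; Proposition~\ref{prop:almost-Zhang-quantitative} then lower-bounds $\widehat{\mathrm{vol}}_\chi$ by the intersection-number expansion via Yuan's Siu-type arithmetic bigness criterion for a difference of nef adelic bundles. Your proposal would be repaired by replacing the Zhang--Hilbert--Samuel--$s\to0$ chain with exactly this Minkowski-plus-Yuan-bigness route.
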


This result is inspired by K\"uhne's work~\cite{Kuhne} where he establishes an equidistribution statement in families of abelian varieties and from \cite{YZ-adelic}, where Yuan and Zhang develop a general theory of adelic line bundles on quasi-projective varieties. Among other results, Yuan and Zhang prove an equidistribution theorem for small point in this context. They also deduce Theorem~\ref{tm:principal} and Theorem~\ref{cor:ratd} from this general result. The aim of this article is to provide a more naive and independent approach which seems particularly adapted to a dynamical setting. I also have to mention that, even though he focuses on the case of families of abelian varieties in his paper, K\"uhne has a strategy to generalize his relative equidistribution theorem to a dynamical setting.

\smallskip

It is worth mentioning there are many arithmetic equidistribution statement for small points in the past decades see, e.g., \cite{Ullmo-Zhang,Bilu,rumely,Thuillier,Baker-Rumely,ACL,FRL,yuan,CL-T,BR-book,Berman-Boucksom,mavraki-ye-quasiadelic,BGPRLS}. It si also worth mentioning that, prior to K\"uhne's recent work only the equidistribution results of arithmetic nature from~\cite{rumely} and~\cite{BR-book} do not rely on the continuity of the underlying metrics and compactness of the variety. Also, in both~\cite{rumely} and~\cite{BR-book}, the results are stated on $\mathbb{P}^1$ and each metric is continuous outside a polar set and is bounded. The generalization of their approach remains unexplored in a more general context. It should also be noted that Mavraki and Ye~\cite{mavraki-ye-quasiadelic} were the first to get rid of the assumption that the metrization is \emph{adelic}. Theorem~\ref{tm:equidistrib} gives a general criterion to have such an equidistribution statement. What is important here is that a good height function is not necessarily induced by a metrization on a projective model of the variety, or given by an adelic datum. 

\medskip

The strategy of the proof of Theorem~\ref{tm:equidistrib} follows more or less that of Yuan's result. Let us now quickly sketch the proof. Fix an integer $n$ and let $\varphi$ be a test function at place $v$ with compact support in $V_v^\mathrm{an}$ and endow the trivial bundle of $X_n$ with the metric induced by $\varphi_n:=\varphi\circ\psi_n$ at place $v$ and with the trivial metric at all places $w\neq v$. As it is classical, we first use the adelic Minkowski's second Theorem to compare the $\liminf_{i} h_{\bar{L}_n(\varphi)}(F_i)$
with the arithmetic volume $\widehat{\mathrm{vol}}_\chi(\bar{L}_n(\varphi))$ of of the metrized $\mathbb{Q}$-line bundle $\bar{L}_n(\varphi)$. 
Then, we rely on the arithmetic version of Siu's bigness criterion proved by Yuan~\cite{yuan} to get a lower bound on the expansion of $\widehat{\mathrm{vol}}_\chi(\bar{L}_n(t\varphi))$ with respect to $t>0$ of the form $t\langle \mathrm{vol}(L_n)^{-1}c_1(\bar{L}_n)^k_v,\varphi_n\rangle+C_n(\varphi,t)$. When $\dim V>1$, this is ensured by assumption (4).

\medskip

Our main input here is to get an explicit control of the term $C_n(\varphi,t)$ in terms of $\mathrm{vol}(L_n)$, $\sup_{V_v^\mathrm{an}}|\varphi|$ only, for all $t\in(0,1]$. This allows us to find $C\geq1$ depending only on $\varphi$ such that for all $t\in(0,1]$, we have
\[\limsup_{i\to\infty}\left|\langle \mu_{F_i,v},\varphi\rangle-\left\langle \frac{c_1(\bar{L}_n)^k_v}{\mathrm{vol}(L_n)},\varphi_n\right\rangle\right|\leq \frac{\varepsilon_n(\{F_i\}_i)}{t}+Ct.\]
The hypothesis on $h$-small sequences and the assumption that $\mathrm{vol}(L_n)^{-1}(\psi_n)_*c_1(\bar{L}_n)^k_v$ converges to $\mu_v$ allow us to conclude.

\subsection*{Applications in families of dynamical systems}
Our motivation for proving Theorem~\ref{tm:equidistrib} comes from the study of families of dynamical systems.  More precisely, we want to apply Theorem~\ref{tm:equidistrib} to families of polarized endomorphisms.
Let $S$ be a smooth quasi-projective variety of dimension $p\geq1$ and let $\pi:\mathcal{X}\to S$ be a family of smooth projective varieties. We say $(\mathcal{X},f,S)$ is a family of \emph{polarized endomorphisms} if $f:\mathcal{X}\to\mathcal{X}$ is a morphism with $\pi\circ f=\pi$ and if there is a relatively ample line bundle $\mathcal{L}$ on $\mathcal{X}$ and an integer $d\geq2$ such that $f^*\mathcal{L}\simeq \mathcal{L}^{\otimes d}$.
Given a family $(\mathcal{X},f,\mathcal{L})$ and a collection $\mathfrak{a}:=(a_1,\ldots,a_q)$ of sections $a_j:S\to\mathcal{X}$ of $\pi$ are all defined over $\bar{\mathbb{Q}}$, we can define a height function on the variety $S$ by letting
\[h_{f,\mathfrak{a}}(t):=\sum_{j=1}^q\widehat{h}_{f_t}(a_j(t))), \quad t\in S(\bar{\mathbb{Q}}),\]
where $f_t$ is the restriction of $f$ to the fiber $X_t$ of $\pi:\mathcal{X}\to S$ and $\widehat{h}_{f_t}$ is the canonical height of the endomorphism $f_t:X_t\to X_t$, as defined by Call-Silverman~\cite{CS-height}.
If $v\in M_\mathbb{K}$ is archimedean, we can also define a \emph{bifurcation current} $T_{f,a_i}$ on $S_v^\mathrm{an}$ for each dynamical pair $(\mathcal{X},f,\mathcal{L},a_i)$ by letting
\[T_{f,a_i}:=\pi_*\left(\widehat{T}_f\wedge[a_i(S_v^\mathrm{an})]\right).\]
This is a closed positive $(1,1)$-current with continuous potential, see \S~\ref{sec:families} for more details. 


\medskip

As an application of Theorem~\ref{tm:equidistrib}, we prove the following.

\begin{theorem}\label{tm:principal}
Let $(\mathcal{X},f,\mathcal{L})$ be a family of polarized endomorphisms parametrized by a smooth quasiprojective variety $S$ and let $\mathfrak{a}:=(a_1,\ldots,a_q)$ be a collection of sections of $\pi:\mathcal{X}\to S$, all defined over a number field $\mathbb{K}$. Assume the following properties hold:
\begin{enumerate}
\item there exists $v\in M_\mathbb{K}$ archimedean such that the measure 
\[\mu_{f,\mathfrak{a}}:=(T_{f,a_1}+\cdots+T_{f,a_q})^{\dim S}\]
is non-zero with mass $\mathrm{vol}_{f}(\mathfrak{a}):=\mu_{f,a}(S_v^\mathrm{an})>0$,
\item there is a generic $h_{f,\mathfrak{a}}$-small sequence $\{F_i\}_i$ of finite Galois-invariant subsets of $S(\bar{\mathbb{Q}})$.
\end{enumerate}
Pick any $v\in M_\mathbb{K}$ and let $(F_m)_m$ be a $h$-small sequence of Galois-invariant finite subsets of $S(\bar{\mathbb{Q}})$ such that for any hypersurface $H\subset S$ defined over $\mathbb{K}$, we have
\[\# (F_n\cap H)=o(\# F_n), \quad \text{as} \ n\to+\infty.\]
Then the sequence  $(\mu_{F_m,v})_m$ of probability measure on $S_v^\mathrm{an}$ converges to the probability measure $\frac{1}{\mathrm{vol}_{f}(\mathfrak{a})}\mu_{f,a,v}$ in the weak sense of measures on $S_v^\mathrm{an}$.
\end{theorem}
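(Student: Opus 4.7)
The plan is to deduce Theorem~\ref{tm:principal} from Theorem~\ref{tm:equidistrib} by proving that $h := h_{f,\mathfrak{a}}$ is a good height on $S$ at every place $v \in M_\mathbb{K}$. For the data of axioms (a)--(b), the second hypothesis of the theorem furnishes the generic $h_{f,\mathfrak{a}}$-small sequence; I would set $\mathrm{vol}(h) := \mathrm{vol}_f(\mathfrak{a})$ and, at every place $v$, $\mu_v := \mathrm{vol}_f(\mathfrak{a})^{-1}(T_{f,a_1} + \cdots + T_{f,a_q})^{\dim S}$, the dynamical Green currents $\widehat{T}_f$ being constructed analogously at every place of $\mathbb{K}$.

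To supply the data of axiom (c), I would fix a projective compactification $X_0 \supset S$ together with a projective model $\bar{\pi}: \bar{\mathcal{X}}_0 \to X_0$ of $\pi$ on which $\mathcal{L}$ extends to a line bundle $\bar{\mathcal{L}}$, itself endowed with a semi-positive adelic continuous metrization. Since $f$ only extends as a rational self-map of $\bar{\mathcal{X}}_0$, for each $n$ I would resolve simultaneously the indeterminacies of $f^n$ and of the rational extensions of $f^n \circ a_j$, obtaining a projective birational morphism $\psi_n: X_n \to X_0$ (an isomorphism above $S$), a lift $F_n$ of $f^n$ to an auxiliary projective model $\bar{\mathcal{X}}_n$, and morphisms $\tilde a_j^{(n)}: X_n \to \bar{\mathcal{X}}_n$. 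I would then set
\[L_n \; := \; \sum_{j=1}^q (\tilde a_j^{(n)})^*\bigl(d^{-n}\, F_n^* \bar{\mathcal{L}}\bigr),\]
a $\mathbb{Q}$-line bundle on $X_n$ endowed with the semi-positive adelic metrization $\bar L_n$ pulled back from $\bar{\mathcal{L}}$. Nefness of $L_n$ is preserved under pullback, and its bigness (for $n$ large) follows from the first hypothesis of the theorem, which forces the top self-intersection to be bounded below by a positive quantity close to $\mathrm{vol}_f(\mathfrak{a})$.

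For axioms (1)--(3), the classical Call--Silverman telescoping estimate
\[\bigl|d^{-n}\, h_{\bar{\mathcal{L}}}(f^n(a_j(t))) - \widehat{h}_{f_t}(a_j(t))\bigr| \; \leq \; C\, d^{-n}\]
holds uniformly in $t \in S(\bar{\mathbb{Q}})$, so $h_{\bar L_n}$ approximates $h_{f,\mathfrak{a}}$ on $S(\bar{\mathbb{Q}})$ with error $O(d^{-n})$; combined with the convergence $h_{\bar L_n}(X_n) \to 0$ (reflecting the vanishing of the canonical height in the dynamical limit), this gives axiom (1). At every place $v$, the curvature $c_1(\bar L_n)_v$ on $S_v^{\mathrm{an}}$ is a sum of approximations $(a_j)^*\bigl(d^{-n}(f^n)^* c_1(\bar{\mathcal{L}})_v\bigr)$ of the dynamical Green currents $\widehat{T}_f$; pushing forward via $\psi_n$ and expanding the $k$th power gives the weak convergence $(\psi_n)_* c_1(\bar L_n)_v^k \to \mu_{f,\mathfrak{a},v}$ required by axiom (3), and taking total masses yields axiom (2).

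The main obstacle is axiom (4), namely the uniform bound $(\psi_n^*\bar M_0)^j \cdot \bar L_n^{k+1-j} \leq C$ when $k = \dim S > 1$. Expanding multilinearly reduces the task to controlling arithmetic intersections of the form $(\psi_n^*\bar M_0)^j \cdot \prod_{i=1}^{k+1-j} (\tilde a_{l_i}^{(n)})^*\bigl(d^{-n} F_n^* \bar{\mathcal{L}}\bigr)$. Using the projection formula through the section maps $\tilde a_j^{(n)}$ and through $F_n$, each such term rewrites as a mixed arithmetic intersection on the fixed model $\bar{\mathcal{X}}_0$ involving only $\bar{\mathcal{L}}$, pushforwards of $\bar M_0$, and weights $d^{-n(k+1-j)}$; the required uniform bound then follows from the invariance of the nef cohomology class $[d^{-n}(f^n)^*\bar{\mathcal{L}}]$ together with the standard cocycle estimate for the associated dynamical Green function. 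Once this is in place, all four axioms are verified, and Theorem~\ref{tm:equidistrib} applied to $h_{f,\mathfrak{a}}$ at the place $v$ delivers the convergence of probability measures claimed.
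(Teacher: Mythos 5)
Your overall strategy is the right one and matches the paper's: reduce to checking that $h_{f,\mathfrak{a}}$ is a good height, build the models $(X_n,\bar L_n,\psi_n)$ from iterated marked points $\mathfrak a_n=f^{\circ n}\circ a$ and (a normalization of) $d^{-n}(\iota\circ\mathfrak a_n)^*\bar{\mathcal O}(1)$, then feed this into Theorem~\ref{tm:equidistrib}. The paper also reduces to a single marked point via the fibered product $\mathcal X^{[q]}$, but your direct sum over $j$ is essentially equivalent.

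There is, however, a genuine gap in your verification of axiom~(1). You assert that the Call--Silverman estimate
\[\bigl|d^{-n}\, h_{\bar{\mathcal{L}}}(f^n(a_j(t))) - \widehat{h}_{f_t}(a_j(t))\bigr| \leq C\, d^{-n}\]
holds \emph{uniformly} in $t\in S(\bar{\mathbb Q})$. It does not. Call--Silverman (\cite{CS-height}, Theorem~3.1) gives $\bigl|\widehat h_{f_t}(x)-h_{\iota^*\bar{\mathcal O}(1)}(x)\bigr|\le C_3(1+h_S(t))$, with an error that grows linearly in the height of the base point $t$; the quasi-projectivity of $S$ is exactly why one cannot do better. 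After telescoping this yields
\[\bigl|h_{f,\mathfrak a}(t)-h_{\bar L_n}(\psi_n^{-1}(t))\bigr|\leq \frac{C_3}{d^n}\bigl(1+h_S(t)\bigr),\]
and without further input this does not let you compare $h_{\bar L_n}(F_i)$ with $h_{f,\mathfrak a}(F_i)$ along an $h$-small sequence, since the $F_i$ may have points of unbounded base height. Overcoming this is the heart of the proof and is where hypothesis~(1) of the theorem enters. The paper does this via Lemma~\ref{lm:DGH} (in the spirit of Gao--Habegger): using the convergence of volumes $\mathrm{vol}(L_n)\to\mathrm{vol}_f(\mathfrak a)>0$, the uniform bound on mixed intersections $(\psi_n^*H\cdot L_n^{p-1})$, and Siu's classical bigness criterion, one shows $ML_n-\psi_n^*H$ is big for a fixed $M$ and all large $n$, whence $h_S(t)\le c(1+h_{f,\mathfrak a}(t))$ on a Zariski-open $U\subset S$. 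Substituting this back converts the error into $\tfrac{C_4}{d^n}(1+h_{f,\mathfrak a}(t))$, which \emph{is} small along an $h$-small sequence. Your argument is missing this entire mechanism, so axiom~(1) is not established.

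A smaller concern: your treatment of axiom~(4) is too vague. Saying the bound ``follows from invariance of the nef class $[d^{-n}(f^n)^*\bar{\mathcal L}]$ and the cocycle estimate'' does not explain why the arithmetic intersections $(\psi_n^*\bar M_0)^j\cdot\bar L_n^{k+1-j}$ stay bounded as $n\to\infty$, since $\psi_n^*\bar M_0$ and the models $X_n$ both change with $n$. The paper's route is more concrete: it establishes the exact telescoping relation $\bar L_{n}=\psi_n^*(\bar L_0+\alpha(n)\bar N_0)$ with $\alpha(n)=\frac{1-d^{-n}}{d-1}\le\frac{1}{d-1}$ and $\bar N_0=\mathfrak a_0^*\bar E$ a fixed metrized bundle on $B_0$, so all these intersections push down to intersections of fixed bundles on $B_0$ with bounded coefficients. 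You should make a comparable computation explicit, both because it is needed for axiom~(4) and because the same mixed-intersection bound feeds into the Siu-criterion step of Lemma~\ref{lm:DGH}.
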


Let us make a few comments on the proof. By Theorem~\ref{tm:equidistrib}, it is sufficient to prove $h_{f,\mathsf{a}}$ is a good height function with associated global measure $\{\mu_{f,\mathfrak{a},v}\}_{v\in M_\mathbb{K}}$. The construction of $(X_n,\bar{L}_n,\psi_n)$ is quite easy and just consists in revisiting the convergence
\[\widehat{h}_{f_t}(a_j(t))=\lim_{n\to\infty}d^{-n}h_{\mathcal{X},\mathcal{L}}(f_t^n(a_j(t))).\]
The convergence of measures $c_1(\bar{L}_n)_v^{\dim S}$ towards $\mu_{f,a,v}$ and the upper bound on the local intersection numbers are also not difficult to establish. The two key facts are the convergence of volumes, which relies on the key estimates of \cite{GV_Northcott}, and the fact that for any small generic sequence $\{F_i\}_i$, the sequence $\varepsilon_n(\{F_i\}_i)$ converges to $0$. To establish this last point, we use a comparison of $\widehat{h}_{f_t}$ with $h_{\mathcal{X},\mathcal{L}}|_{X_t}$ established by Call and Silverman~\cite{CS-height}, as well as the convergence of the volumes and Siu's classical bigness criterion. 
This proof directly inspires from the strategy of \cite{Kuhne}, where the above mentioned arguments replace his use of a deep and recent result on families of abelian varieties due to Gao and Habegger~\cite{Gao-Habegger}.

\begin{center}
$\dag$
\end{center}

To emphasize the strength of Theorem~\ref{tm:principal}, we finish here with a general equiditribution towards the \emph{bifurcation measure} $\mu_\bif$ of the moduli space $\mathcal{M}_d$ as introduced by \cite{BB1}. Recall that the moduli space $\mathcal{M}_d$ of degree $d$ rational maps is the space of $PGL(2)$ conjugacy classes of degree $d$ rational maps, and that it is an irreducible affine variety of dimension $2d-2$ defined over $\mathbb{Q}$,~see~e.g.~\cite{Silverman-Space-rat}.
Recall also that a parameter $\{f\}$ is \emph{post-critically finite} (or PCF) if its post-critical set $\bigcup_{n\geq1}f^{\circ n}(\mathrm{Crit}(f))$ is finite.

\begin{theorem}\label{cor:ratd}
Fix a sequence $(F_n)_n$ of finite subsets of the moduli space $\mathcal{M}_d(\bar{\mathbb{Q}})$ of degree $d$ rational maps of $\mathbb{P}^1$ such that $F_n$ is Galois-invariant for all $n$ and such that, for any hypersurface $H$ of $\mathcal{M}_d$ that is defined over $\mathbb{Q}$, then
\[\# (F_n\cap H)=o(\# F_n), \quad \text{as} \ n\to\infty.\]
Assume that for any $\{f\}\in \bigcup_nF_n$, the map $f$ is PCF. Then the measure $\frac{1}{\# F_n}\sum_{\{f\}\in F_n}\delta_{\{f\}}$ converges weakly to the normalized bifurcation measure $\mu_\mathrm{bif}$.
\end{theorem}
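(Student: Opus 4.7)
The plan is to derive this corollary directly from Theorem~\ref{tm:principal} applied to the universal family of degree $d$ rational maps parametrized by (a finite cover of) the moduli space $\mathcal{M}_d$, with the critical points playing the role of the marked points. First I would replace $\mathcal{M}_d$ by a finite cover $\widetilde{\mathcal{M}}_d$, defined over $\mathbb{Q}$, over which the $2d-2$ critical points of the universal family become actual sections $c_1,\ldots,c_{2d-2}:\widetilde{\mathcal{M}}_d\to\mathcal{X}$ of a family $\pi:\mathcal{X}\to\widetilde{\mathcal{M}}_d$ polarized fiberwise by $\mathcal{O}_{\mathbb{P}^1}(1)$. On this cover one has the height function
\[h_{f,\mathfrak{a}}(t)=\sum_{i=1}^{2d-2}\widehat{h}_{f_t}(c_i(t)),\]
which by Northcott and Call--Silverman vanishes precisely on PCF parameters. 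The hypothesis that every point of $\bigcup_n F_n$ is PCF therefore makes any lift $\widetilde{F}_n\subset \widetilde{\mathcal{M}}_d(\bar{\mathbb{Q}})$ of $F_n$ an $h_{f,\mathfrak{a}}$-small sequence.

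Next I would verify the two remaining hypotheses of Theorem~\ref{tm:principal} at an archimedean place $v$. For hypothesis~(1), the current $T_{f,c_i}$ is the classical bifurcation current of the critical point $c_i$, and the sum $\sum_i T_{f,c_i}$ pushes forward to a multiple of the Bassanelli--Berteloot bifurcation current $T_{\bif}$ on $\mathcal{M}_d$; hence $\mu_{f,\mathfrak{a}}=(\sum_iT_{f,c_i})^{2d-2}$ pushes forward to a positive multiple of $\mu_{\bif}=T_{\bif}^{2d-2}$, which is known to be non-zero with positive total mass (Bassanelli--Berteloot, Dujardin--Favre). For hypothesis~(2), McMullen's theorem on the Zariski density of PCF parameters in $\mathcal{M}_d$ allows one to build, by a standard countable exhaustion of proper subvarieties, a generic sequence of Galois-invariant finite sets of PCF parameters; this is a generic $h_{f,\mathfrak{a}}$-small sequence as required.

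With these hypotheses in place, and since the asymptotic genericity assumption on $(F_n)$ with respect to hypersurfaces of $\mathcal{M}_d$ transfers under the finite cover to the lifts $\widetilde{F}_n$, Theorem~\ref{tm:principal} yields that the equidistributed probability measures on $\widetilde{F}_n$ converge weakly on $\widetilde{\mathcal{M}}_{d,v}^\mathrm{an}$ to $\mathrm{vol}_f(\mathfrak{a})^{-1}\mu_{f,\mathfrak{a}}$. Pushing forward by the finite covering map $\widetilde{\mathcal{M}}_d\to\mathcal{M}_d$ and using that the pushforward of $\mu_{f,\mathfrak{a}}$ is proportional to $\mu_{\bif}$ gives the desired convergence on $\mathcal{M}_d$.

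The main obstacle I anticipate is bookkeeping rather than conceptual: one has to carry out the finite-cover construction over $\mathbb{Q}$ so that all objects remain Galois-equivariant, identify $\sum_iT_{f,c_i}$ with the pullback of $T_{\bif}$ up to the correct combinatorial factor, and reconcile the normalization $\mathrm{vol}_f(\mathfrak{a})^{-1}\mu_{f,\mathfrak{a}}$ with the normalized bifurcation measure $\mu_{\bif}$ after pushforward. The inputs from complex dynamics (non-vanishing of $\mu_{\bif}$ and density of PCF parameters) are classical, so the main work is purely in organizing the descent from $\widetilde{\mathcal{M}}_d$ to $\mathcal{M}_d$.
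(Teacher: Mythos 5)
Your proposal follows essentially the same route as the paper: apply Theorem~\ref{tm:principal} to the critically marked moduli space (the paper works explicitly with $\mathcal{M}_d^{\mathrm{cm}}$ as constructed by Buff--Epstein), use the critical points as the collection of marked sections, identify PCF with $h$-small, verify the two hypotheses, and push forward the equidistribution result along the finite cover $p:\mathcal{M}_d^{\mathrm{cm}}\to\mathcal{M}_d$ using $\mu_{\bif,\mathrm{cm}}=p^*(\mu_\bif)$.

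One inaccuracy worth flagging: the Zariski density of PCF parameters in $\mathcal{M}_d$ is not McMullen's theorem. McMullen's rigidity result (stable algebraic families of rational maps are trivial or Latt\`es) enters indirectly, namely in the proof that $T_{\bif}^{\dim V}\neq 0$ on any subvariety $V$ other than the flexible Latt\`es curve ([GOV2, Lemma~6.8], which the paper invokes for hypothesis~(1)), but it does not by itself give density of PCF maps. The paper obtains the generic $h$-small sequence required for hypothesis~(2) from its own Proposition~\ref{prop:para-distrib}, a potential-theoretic argument in the spirit of Dujardin, which takes positivity of $\mu_{f,\mathfrak{a}}$ as input and produces Zariski density of $\mathrm{Preper}(f,a_1,\ldots,a_q)$. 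Your "countable exhaustion" step to turn density into a genuine generic Galois-invariant sequence is correct and is indeed what is needed; you should simply replace the appeal to McMullen by an appeal to Proposition~\ref{prop:para-distrib} (or to an equivalent statement in the literature on equidistribution of PCF maps).
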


\subsection*{A variant of Theorem~\ref{tm:equidistrib} on projective varieties}
The strength of the proof we give is that it allows to get rid of the existence of a ``height function'' to get an equidistribution result in the spirit of Theorem~\ref{tm:equidistrib}, at least when working on a projective variety.
To highlight this observation, we are now going to give a variant of Theorem~\ref{tm:equidistrib} on \emph{projective} varieties, where the open set over which the model $X_n$ is isomorphic to $X$ actually \emph{depends} on $n$. 

\medskip

Let us now be more precise. We let $X$ be a projective variety of dimension $k$ defined over $\bar{\mathbb{Q}}$ and we fix a place $v\in M_\mathbb{K}$. For any $n\geq0$, we let be a birational morphism $\psi_n:X_n\to X$ and we let $L_n$  be a big and nef $\mathbb{Q}$-line bundle endowed with a semi-positive adelic continuous metrization $\bar{L}_n$. We assume that
\begin{enumerate}
\item the sequence $\mathrm{vol}(L_n)$ converges to  constant $\mathrm{V}>0$ and the sequence of probability measures $(\mathrm{vol}(L_n)^{-1}(\psi_n)_*c_1(\bar{L}_n)^k_v)_n$ converges weakly to a probability measure $\mu_v$ on $X_v^\mathrm{an}$,
\item If $k:=\dim X>1$, for any ample line bundle $M_0$ on $X$ and any adelic semi-positive continuous metrization $\bar{M}_0$ on $M_0$, there is a constant $C\geq0$ such that 
\[\left(\psi_n^*(\bar{M}_0)\right)^j\cdot \left(\bar{L}_n\right)^{k+1-j}\leq C,\]
for any $2\leq j\leq k+1$ and any $n\geq0$.
\end{enumerate}
\begin{definition}
The data $(X,\mu_v,X_n,\bar{L}_n)$ is a \emph{quasi-height} on $X$ at place $v$.
\end{definition}

A sequence $(F_i)_i$ of Galois-invariant finite subsets of $X(\bar{\mathbb{Q}})$ is \emph{quasi-small} if $\psi_n^{-1}\{F_i\}$ is a finite subset of $X_n(\bar{\mathbb{Q}})$ for any $n\geq0$ and any $i$ and if the sequence
\[\varepsilon_n(\{F_i\}_i):=\limsup_ih_{\bar{L}_n}(\psi_n^{-1}(F_i))-h_{\bar{L}_n}(X_n)\] satisfies $\varepsilon_n(\{F_i\})\to0$ as $n\to\infty$.

\bigskip

As in the case of good height functions on a quasi-projective variety, quasi-small points equidistribute the measure $\mu_v$. The precise statement is the following.

\begin{theorem}[Equidistribution of quasi-small points]\label{tm:equidistrib-proj}
Let $X$ be a projective variety defined over a number field $\mathbb{K}$, let $v\in M_\mathbb{K}$ and let $(X,\mu_v,X_n,\bar{L}_n)$ is a quasi-height on $X$ at place $v$. For any quasi-small sequence $(F_m)_m$ of Galois-invariant finite subsets of $X(\bar{\mathbb{Q}})$ such that for any hypersurface $H\subset V$ defined over $\mathbb{K}$, we have
\[\# (F_n\cap H)=o(\# F_n), \quad \text{as} \ n\to+\infty,\]
the probability measure $\mu_{F_m,v}$ on $X_v^\mathrm{an}$ which is  equidistributed on $F_m$ converges to $\mu_v$ in the weak sense of measures, i.e. for any continuous function with compact support $\varphi\in\mathscr{C}^0(X_v^\mathrm{an})$, we have
\[\lim_{m\to\infty}\frac{1}{\# F_m}\sum_{y\in F_m}\varphi(y)=\int_{X_v^\mathrm{an}}\varphi\,\mu_v.\]
\end{theorem}

\par\noindent The proof of this result follows closely that of Theorem~\ref{tm:equidistrib} and we will explain how to adapt the arguments, when needed.

\paragraph*{Organization of the paper}
Section~\ref{sec:ineg-height} is devoted to quantitative height inequalities which are use in the proof of Theorem~\ref{tm:equidistrib}. Section~\ref{sec:proofgoodheight} is dedicated to the proof of Theorem~\ref{tm:equidistrib}, and we prove in Subsection~\ref{sec:qa} that quasi-adelic measures induce good height functions. Theorem~\ref{tm:principal} is proved in Section~\ref{sec:distrib-para}. Finally, in Section~\ref{sec:application} we discuss the assumptions of Theorem~\ref{tm:principal}, proving they are sharp and we prove Theorem~\ref{cor:ratd}.

\paragraph*{Acknowledgment} I heartily thank S\'ebastien Boucksom, Charles Favre and Gabriel Vigny. This paper benefited from invaluable comments from them. I also want to thank Xinyi Yuan for very interesting discussions and for pointing out an error in an earlier version of Proposition~\ref{prop:almost-Zhang-quantitative}. I also want to thank Laura DeMarco and Lars K\"uhne for interesting discussions. I finally would like to thank Yuan and Zhang for kindly sharing their draft.  Finally, I thank the Ecole Polytechnique where I was working unitl september 2021.

\section{Quantitative height inequalities}\label{sec:ineg-height}
In the whole section, we let $X$ be a smooth projective variety of dimension $k$ defined over a number field $\mathbb{K}$. 
\subsection{Arithmetic intersection and heights}
For the material of this section, we refer to \cite{ACL2} and \cite{Zhang-positivity}. Let $L_0,\ldots,L_k$ be $\mathbb{Q}$-line bundle on $X$. Assume $L_i$ is equipped with an adelic continuous metric $\{\|\cdot\|_{v,i}\}_{v\in M_\mathbb{K}}$ and we denote $\bar{L}_i:=(L_i,\{\|\cdot\|_v\}_{v\in M_\mathbb{K}})$. Assume $\bar{L}_i$ is  semi-positive for $1\leq i\leq k$ and $\bar{L}_0$ is integrable, i.e. can be written as a difference of semi-positive adelic line bundles.

~

Fix a place $v\in M_\mathbb{K}$. Denote by $X_v^\mathrm{an}$ the Berkovich analytification of $X$ at the place $v$. We also let $c_1(\bar{L}_i)_v$ be the curvature form of the metric $\|\cdot\|_{v,i}$ on $X_v^\mathrm{an}$.
We will use in the sequel that the arithmetic intersection number $\left(\bar{L}_0\cdots\bar{L}_k\right)$ is symmetric and multilinear with respect to the $L_i$ and that
\[\left(\bar{L}_0\right)\cdots\left(\bar{L}_k\right)=\left(\bar{L}_1|_{\mathrm{div}(s)}\right)\cdots\left(\bar{L}_k|_{\mathrm{div}(s)}\right)+\sum_{v\in M_\mathbb{K}}\int_{X_v^{\mathrm{an}}}\log\|s\|^{-1}_v \bigwedge_{j=1}^kc_1(\bar{L}_i)_v,\]
for any global section $s\in H^0(X,L_0)$ (whenever such a section exists). In particular, if $L_0$ is the trivial bundle and $\|\cdot\|_{v,0}$ is the trivial metric at all places but $v_0$, this gives
\[\left(\bar{L}_0\right)\cdots\left(\bar{L}_k\right)=\int_{X_{v_0}^{\mathrm{an}}}\log\|1\|^{-1}_{v_0,0} \bigwedge_{j=1}^kc_1(\bar{L}_i)_{v_0}.\]
When $\bar{L}$ is a big and nef $\mathbb{Q}$-line bundle endowed with a semi-positive continuous adelic metric, following Zhang~\cite{Zhang-positivity}, we define $h_{\bar{L}}(X)$ as
\[ h_{\bar{L}}(X):=\frac{\left(\bar{L}\right)^{k+1}}{(k+1)[\mathbb{K}:\mathbb{Q}]\mathrm{vol}(L)},\]
where $\mathrm{vol}(L)=(L)^k$ is the volume of the line bundle $L$ (also denoted by $\deg_X(L)$ sometimes).
We also define the height of a closed point $x\in X(\bar{\mathbb{Q}})$ as 
\[h_{\bar{L}}(x)=\frac{\left(\bar{L}|x\right)}{[\mathbb{K}:\mathbb{Q}]}=\frac{1}{[\mathbb{K}:\mathbb{Q}]\#\mathsf{O}(x)}\sum_{v\in M_\mathbb{K}}\sum_{\sigma:\mathbb{K}(x)\hookrightarrow \C_v}\log\|s(\sigma(x))\|_{v}^{-1},\]
where $\mathsf{O}(x)$ is the Galois orbit of $x$, for any section $s\in H^0(X,L)$ which does not vanish at $x$. Finally, for any Galois-invariant finite set $F\subset X(\bar{\mathbb{Q}})$, we define $h_{\bar{L}}(F)$ as
\[h_{\bar{L}}(F):=\frac{1}{\# F}\sum_{y\in F}h_{\bar{L}}(y).\]
A fundamental estimate is the following, called Zhang's inequalities~\cite{Zhang-positivity}:
\begin{lemma}[Zhang]\label{ineg-Zhang-below}
If $L$ is ample and $e(\bar{L})=\sup_H\inf_{x\in (X\setminus H)(\bar{\mathbb{Q}})}h_{\bar{L}}(x)$, where the supremum is taken over all hypersurfaces $H$ of $X$ defined over $\mathbb{K}$, then
\[\frac{1}{k+1}\left(e(\bar{L})+k\inf_{y\in X(\bar{\mathbb{Q}})}h_{\bar{L}}(y)\right)\leq h_{\bar{L}}(X)\leq e(\bar{L}).\]
In particular, if $h_{\bar{L}}(x)\geq0$ for all $x\in X(\bar{\mathbb{Q}})$, then $h_{\bar{L}}(X)\geq0$.
\end{lemma}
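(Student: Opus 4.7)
Following Zhang~\cite{Zhang-positivity}, I would establish the two bounds separately.

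\emph{Upper bound $h_{\bar{L}}(X) \leq e(\bar{L})$.} Argue by contradiction. Pick a rational $c$ with $e(\bar{L}) < c < h_{\bar{L}}(X)$ and consider the adelic $\mathbb{Q}$-line bundle $\bar{L}(-c)$ obtained from $\bar{L}$ by shifting the metric so as to decrease the height of every point by $c$. Since the class of the trivial bundle with constant metric squares to zero in the arithmetic intersection product, one computes
\[(\bar{L}(-c))^{k+1} = (\bar{L})^{k+1} - (k+1)\,c\,[\mathbb{K}:\mathbb{Q}]\,\mathrm{vol}(L) = (k+1)\,[\mathbb{K}:\mathbb{Q}]\,\mathrm{vol}(L)\,(h_{\bar{L}}(X)-c) > 0.\]
The arithmetic Siu inequality of Yuan~\cite{yuan}, applied to $\bar{L}$ and the trivial bundle equipped with the appropriate constant metric, together with the ampleness of $L$, then yields $\widehat{\mathrm{vol}}(\bar{L}(-c)) > 0$. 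Consequently, for some $N \geq 1$ there is a nonzero section $s \in H^0(X, NL)$ with $\|s\|_{N\bar{L}(-c),v,\sup} \leq 1$ at every place $v$. Its divisor $H := \mathrm{div}(s)$ is a proper hypersurface defined over $\mathbb{K}$, and for every $x \in (X \setminus H)(\bar{\mathbb{Q}})$ the inequality $h_{\bar{L}(-c)}(x) \geq 0$ gives $h_{\bar{L}}(x) \geq c$, contradicting $e(\bar{L}) < c$.

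\emph{Lower bound.} I would proceed by induction on $k = \dim X$, the base case $k = 0$ being trivial. Introduce Zhang's successive minima $e_0(\bar{L}) = e(\bar{L}) \geq e_1(\bar{L}) \geq \cdots \geq e_k(\bar{L}) = \inf_y h_{\bar{L}}(y)$ (where $e_i(\bar{L})$ is defined by requiring that $\{h_{\bar{L}} \leq c\}$ be contained in a closed subset of $X$ of codimension at least $i+1$) and prove the stronger inequality
\[h_{\bar{L}}(X) \geq \frac{1}{k+1}\sum_{i=0}^k e_i(\bar{L}).\]
The inductive step is performed by slicing $X$ by a hypersurface cut out by a small section of some tensor power of the twist $\bar{L}(-e_1(\bar{L}))$, produced by the same bigness argument as above, and applying the induction hypothesis to the restriction $\bar{L}|_H$, whose successive minima are bounded below by those of $\bar{L}$ via a projection formula. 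The stated lower bound then follows from $e_i(\bar{L}) \geq e_k(\bar{L}) = \inf h_{\bar{L}}$ for $1 \leq i \leq k$.

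\textbf{Main obstacle.} The delicate point is ensuring that the hypersurface used in the inductive step is irreducible and intersects the subvarieties implicit in the successive minima of $\bar{L}|_H$ in a way that makes the restriction controllable. This requires a Bertini-type argument in the arithmetic setting, and the ampleness of $L$ is essential here: it guarantees, via the arithmetic Hilbert--Samuel formula, a sufficient supply of small sections whose divisors enjoy the required genericity properties.
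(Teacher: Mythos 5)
The paper does not prove this lemma: it is stated as a cited result, attributed to Zhang and referenced to \cite{Zhang-positivity}, and the author relies on it as a black box (deriving Corollary~\ref{lm:Zhang} from it but never re-proving the lemma itself). Your proposal therefore cannot be compared to an argument in the paper; what it does is reconstruct, in outline, Zhang's original proof of the successive-minima inequality.

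The outline is essentially correct, but two points deserve care. For the upper bound, invoking Yuan's arithmetic Siu inequality is anachronistic and slightly off target: Yuan's theorem requires the subtracted metrized line bundle to be big, and the trivial bundle with a constant metric is not, so one would have to perturb by a small ample class and take a limit. There is no need for this; since twisting by a constant does not change the curvature, $\bar L(-c)$ is still a semi-positively metrized ample $\mathbb{Q}$-line bundle, and the arithmetic Hilbert--Samuel theorem (Gillet--Soul\'e, Abbes--Bouche, or Zhang's own arithmetic ampleness) gives $\widehat{\mathrm{vol}}_\chi(\bar L(-c)) = (\bar L(-c))^{k+1} > 0$ directly; Minkowski then yields the small section. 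For the lower bound, the claim that the successive minima of $\bar L|_H$ ``are bounded below by those of $\bar L$ via a projection formula'' is not accurate as stated: the minima of the restriction can a priori drop if $H$ meets badly the locus where the minima of $\bar L$ are realized. You correctly flag this as the main obstacle, but the remedy is not a projection formula; it is an arithmetic Bertini argument ensuring that $H$ can be chosen irreducible, flat, and in general position with respect to the small-height loci, which is precisely the delicate part of Zhang's induction. With those corrections the plan is the standard and correct proof.
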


In particular, we can deduce the following.
\begin{corollary}\label{lm:Zhang}
Assume $L$ is big and nef $\mathbb{Q}$-line bundle on $X$ which is endowed with an adelic semi-positive continuous metrization $\{\|\cdot\|_v\}_{v\in M_\mathbb{K}}$. Assume $h_{\bar{L}}(x)\geq0$ for any point $x\in X(\bar{\mathbb{Q}})$, then $h_{\bar{L}}(X)\geq0$.
\end{corollary}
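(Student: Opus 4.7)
The plan is to reduce to Zhang's inequalities (Lemma~\ref{ineg-Zhang-below}) by perturbing $\bar{L}$ with a well-chosen ample adelic metrized $\mathbb{Q}$-line bundle and then passing to the limit. First, I fix a very ample line bundle $A$ on $X$, endowed with an adelic semi-positive continuous metrization $\bar{A}$ obtained by pulling back, along an embedding $X \hookrightarrow \mathbb{P}^N$ defined by global sections of $A$, the standard metrization of $\mathcal{O}_{\mathbb{P}^N}(1)$ (Fubini--Study at archimedean places, naive at non-archimedean ones). Since the associated height on $\mathbb{P}^N(\bar{\mathbb{Q}})$ is the naive Weil height, which is non-negative, this gives $h_{\bar{A}}(y) \geq 0$ for every $y \in X(\bar{\mathbb{Q}})$.

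Next, for each rational $\epsilon > 0$, form $\bar{L}_\epsilon := \bar{L} + \epsilon \bar{A}$. The $\mathbb{Q}$-line bundle $L_\epsilon = L + \epsilon A$ is ample (as a sum of a nef and an ample), and $\bar{L}_\epsilon$ is an adelic semi-positive continuous metrization on it. Linearity of the height in the metric gives, for every $y \in X(\bar{\mathbb{Q}})$,
\[h_{\bar{L}_\epsilon}(y) = h_{\bar{L}}(y) + \epsilon\, h_{\bar{A}}(y) \geq 0,\]
so Lemma~\ref{ineg-Zhang-below} applies and yields $h_{\bar{L}_\epsilon}(X) \geq 0$ for every $\epsilon > 0$.

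To finish, I let $\epsilon \to 0^+$ and invoke the polynomial behavior of the arithmetic intersection. Multilinearity of $(\,\cdot\,)^{k+1}$ gives $(\bar{L}_\epsilon)^{k+1} = (\bar{L})^{k+1} + O(\epsilon)$, and similarly $\mathrm{vol}(L_\epsilon) = (L + \epsilon A)^k = \mathrm{vol}(L) + O(\epsilon)$; this denominator stays bounded away from zero thanks to the bigness of $L$. Dividing,
\[h_{\bar{L}_\epsilon}(X) = \frac{(\bar{L}_\epsilon)^{k+1}}{(k+1)[\mathbb{K}:\mathbb{Q}]\,\mathrm{vol}(L_\epsilon)} \xrightarrow[\epsilon \to 0^+]{} h_{\bar{L}}(X),\]
and we conclude that $h_{\bar{L}}(X) \geq 0$. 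The only delicate point, modest as it is, is to secure an auxiliary ample adelic metrized line bundle $\bar{A}$ with everywhere non-negative heights; once this is in hand, the perturbation $\bar{L}_\epsilon$ remains in the non-negative-height regime where Zhang's inequalities apply, and the conclusion is just continuity of the arithmetic intersection number in the metric.
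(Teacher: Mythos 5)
Your argument is correct and follows essentially the same route as the paper: perturb $\bar{L}$ by $\epsilon\bar{A}$ for a fixed ample adelic line bundle $\bar{A}$ with non-negative heights, apply Zhang's inequalities to the ample perturbation, and pass to the limit $\epsilon\to0^+$ via multilinearity of the arithmetic intersection number. The only difference is that you make the construction of $\bar{A}$ explicit (pullback of the standard metrization of $\mathcal{O}_{\mathbb{P}^N}(1)$), which the paper leaves as an assertion; that is a harmless, correct elaboration.
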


\begin{proof}
Let $\bar{A}$ be an ample hermitian line bundle on $X$ with $h_{\bar{A}}\geq0$ on $X$ and $\epsilon>0$ rational. Then $\bar{L}(\epsilon):=\bar{L}+\epsilon \bar{A}$ is an ample adelically metrized $\mathbb{Q}$-line bundle and $h_{\bar{L}(\epsilon)}\geq0$ on $X(\bar{\mathbb{Q}})$. By Lemma~\ref{ineg-Zhang-below}, we have $h_{\bar{L}(\epsilon)}(X)\geq0$. First, note that
\[\mathrm{vol}(L(\epsilon))=\left(L+\epsilon A\right)^k=\sum_{j=0}^k {k \choose j}\left(L\right)^j\left(\epsilon A\right)^{k-j}=\mathrm{vol}(L)+O(\epsilon).\]
Also, we can compute similarly
\begin{align*}
\left(\bar{L}(\epsilon)\right)^{k+1}=\sum_{j=0}^{k+1} {k+1 \choose j} \epsilon^j\left(\bar{L}\right)^{k+1-j}\left(\bar{A}\right)^{j}=\left(\bar{L}\right)^{k+1}+O(\epsilon).
\end{align*}
We thus deduce that
\[0\leq h_{\bar{L}(\epsilon)}(X)=h_{\bar{L}}(X)+O(\epsilon)\]
and the conclusion follows making $\epsilon\to0$.
\end{proof}

\subsection{Test functions}\label{sec:test}

Let $L$ be a big and nef $\mathbb{Q}$-line bundle on $X$. We equip $L$ with an adelic continuous semi-positive metric $\{\|\cdot\|_v\}_{v\in M_\mathbb{K}}$ and we denote $\bar{L}:=(L,\{\|\cdot\|_v\}_{v\in M_\mathbb{K}})$. We also fix a Zariski open set $V\subset X$.
Let $v\in M_\mathbb{K}$ be any place of the field $\mathbb{K}$.

\begin{definition}
A function $\varphi:X_{v}^\mathrm{an}\to\mathbb{R}$ is called a \emph{test function} on $X_{v}^\mathrm{an}$ if it is
\begin{itemize}
\item a $\mathscr{C}^\infty$-smooth function on $X_v^\mathrm{an}$ if $v$ is archimedean,
\item a $\mathbb{Q}$-model function on $X_v^\mathrm{an}$ if $v$ is non-archimedean.
\end{itemize}
We also say $\varphi$ is a \emph{test function on} $V_v^\mathrm{an}$ is it is a test function on $X_v^\mathrm{an}$ and if its support is compactly contained in $V_v^\mathrm{an}$.
\end{definition}

 If $v$ is archimedean, as the space $\mathscr{C}^\infty_c(V_v^\mathrm{an})$ of smooth functions on $V_v^\mathrm{an}$ with compact support is dense in the space $\mathscr{C}^0_c(V_v^\mathrm{an})$ of continuous functions  on $V_v^\mathrm{an}$ with compact support, it is sufficient to test the convergence against smooth functions.
If $v$ is non-archimedean, we use a result of Gubler~\cite[Theorem~7.12]{Gubler-local}: $\mathbb{Q}$-model functions with compact support in $V_v^\mathrm{an}$ are dense in the space $\mathscr{C}^0_c(V_v^\mathrm{an})$ of continuous functions  on $V_v^\mathrm{an}$ with compact support, it is sufficient to test the convergence against compactly supported model functions. In particular, we have the following:

\begin{lemma}[Gubler]\label{lm:Gubler}
Test functions are dense in the space $\mathscr{C}^0_c(V_v^\mathrm{an})$ (resp. in $\mathscr{C}^0(X_v^\mathrm{an})$) of continuous functions on $V_v^\mathrm{an}$ with compact support (resp. of continuous functions on $X_v^\mathrm{an}$).
\end{lemma}

%
%
%

\subsection{Arithmetic volume and the adelic Minkowski Theorem}
We want here to use the arithmetic Minkowski's second Theorem to relate the limit of $h_{\bar{M}}(F_i)$ along a generic sequence of finite subsets $\{F_i\}_i$ of $X(\bar{\mathbb{Q}})$ and the arithmetic volume of $\bar{M}$, for any continuous adelic metrized $\mathbb{Q}$-line bundle. For any $v\in M_\mathbb{K}$, and any section $s\in H^0(X,M)$, set $\|s\|_{v,\sup}:=\sup_{x\in X(\bar{\mathbb{Q}}_v)}\|s(x)\|_v$. 
Assume $H^0(X,M)\neq0$. If $\mathbb{A}_\mathbb{K}$ is the ring of adels of $\mathbb{K}$ and if $\mu$ is a Haar measure on the locally compact abelian group  $H^0(X,M)\otimes\mathbb{A}_\mathbb{K}$, let
\[\chi_{\sup}(\bar{M})=-\log\frac{\mu(H^0(X,M)\otimes\mathbb{A}_\mathbb{K}/H^0(X,M))}{\mu(\prod_vB_v)},\]
where $B_v$ is the (closed) unit ball of $H^0(X,M)\otimes \mathbb{K}_v$ for the norm induced by $\|\cdot\|_{v,\sup}$, see~e.g.~\cite{CL-T} or \cite{Zhang-small}. The \emph{arithmetic volume} $\widehat{\mathrm{vol}}_\chi(\bar{M})$ of $\bar{M}$ is the defined as
\[\widehat{\mathrm{vol}}_\chi(\bar{M}):=\limsup_{N\to\infty}\frac{\chi_{\sup}(N\bar{M})}{N^{k+1}/(k+1)!}.\]

\medskip

Choose a place $v_0\in M_\mathbb{K}$ and let $\varphi:X_{v_0}^\mathrm{an}\to\mathbb{R}$ be a test functions, i.e. $\varphi\in\mathscr{C}^\infty(X_{v_0}^\mathrm{an})$ if $v_0$ is archimedean and $\varphi$ is a model function otherwise. We define a twisted metrized line bundle $\bar{M}(\varphi)$ by changing the metric at the place $v_0$ as follows: let $\|\cdot\|_{v_0,\varphi}:=\|\cdot\|_{v_0}e^{-\varphi}$.

\medskip

Recall that a sequence $(F_i)_i$ of Galois-invariant subsets of $X(\bar{\mathbb{Q}})$ is \emph{generic} if for any hypersurface $Z$ of $X$ defined over $\mathbb{K}$, there is $i_0\geq0$ such that $F_i\cap Z(\bar{\mathbb{Q}})=\varnothing$ for all $i\geq i_0$. Using the arithmetic Minkowski's second Theorem, we can prove the next lemma using a classical argument (see, e.g.,~\cite{Berman-Boucksom}).
\begin{lemma}\label{lm:Minkowski}
For any big and nef line bundle $\bar{M}$ endowed with an adelic semi-positive continuous metrization, any $v_0\in M_\mathbb{K}$, any test function $\varphi:X_{v_0}^{\mathrm{an}}\to\mathbb{R}$ and any generic sequence $(F_i)_i$ of Galois-invariant subsets of $X(\bar{\mathbb{Q}})$, we have
\[\liminf_{i\to\infty}h_{\bar{M}(\varphi)}(F_i)\geq \frac{\widehat{\mathrm{vol}}_\chi(\bar{M}(\varphi))}{[\mathbb{K}:\mathbb{Q}](k+1)\mathrm{vol}(M)}.\]
\end{lemma}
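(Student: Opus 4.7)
The plan is to combine the adelic Minkowski second theorem with the genericity of $(F_i)_i$ via an averaging argument over a basis of small sections, following the strategy pioneered by Zhang~\cite{Zhang-positivity} and Yuan~\cite{yuan}.

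For each large $N\geq 1$, I set $r_N:=h^0(X,NM)$ and apply the arithmetic Minkowski second theorem to $H^0(X,NM)$ viewed as a lattice inside the adelic Banach space obtained by equipping each $H^0(X,NM)\otimes\mathbb{K}_v$ with the sup-norm $\|\cdot\|_{v,\sup}$ attached to $N\bar{M}(\varphi)$. This produces a basis $s_1^{(N)},\ldots,s_{r_N}^{(N)}$ of $H^0(X,NM)$ over $\mathbb{K}$ whose adelic successive minima satisfy
$$-\sum_{j=1}^{r_N}\sum_{v\in M_\mathbb{K}}[\mathbb{K}_v:\mathbb{Q}_v]\log\|s_j^{(N)}\|_{v,\sup}\geq \chi_{\sup}(N\bar{M}(\varphi))-C\, r_N\log r_N,$$
for a constant $C$ depending only on $[\mathbb{K}:\mathbb{Q}]$, after invoking the standard Gromov-type comparison between sup and Hilbertian norms at the archimedean places.

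Next, I exploit genericity: each $s_j^{(N)}$ vanishes along a proper Zariski closed subset of $X$ defined over $\mathbb{K}$, so there is an index $i_0=i_0(N)$ such that every $s_j^{(N)}$ is simultaneously non-vanishing on $F_i$ for $i\geq i_0$. Fixing such $i$ and a single $j$, I express the height $h_{\bar{M}(\varphi)}(x)$ through the section $s_j^{(N)}/N$ and majorize pointwise $\|s_j^{(N)}(y)\|_{v}\leq\|s_j^{(N)}\|_{v,\sup}$ at every place, obtaining
$$h_{\bar{M}(\varphi)}(F_i)\geq -\frac{1}{N[\mathbb{K}:\mathbb{Q}]}\sum_{v\in M_\mathbb{K}}[\mathbb{K}_v:\mathbb{Q}_v]\log\|s_j^{(N)}\|_{v,\sup}.$$
Since this inequality holds for every $j$, averaging over $j=1,\dots,r_N$, taking $\liminf_{i\to\infty}$, and combining with the Minkowski bound yields
$$\liminf_{i\to\infty}h_{\bar{M}(\varphi)}(F_i)\geq \frac{\chi_{\sup}(N\bar{M}(\varphi))}{N\,r_N\,[\mathbb{K}:\mathbb{Q}]}-O\!\left(\frac{\log r_N}{N}\right).$$

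To conclude I let $N\to\infty$. Since the left-hand side is independent of $N$, I may pass to $\limsup_N$ on the right. The Hilbert--Samuel asymptotic $r_N=\mathrm{vol}(M)\,N^k/k!+o(N^k)$, the very definition $\widehat{\mathrm{vol}}_\chi(\bar{M}(\varphi))=\limsup_N \chi_{\sup}(N\bar{M}(\varphi))/(N^{k+1}/(k+1)!)$, and the estimate $\log r_N/N\to 0$ together deliver precisely the announced inequality. The main obstacle is the adelic Minkowski step with sup-norms rather than Hilbertian norms: the distortion between sup and $L^2$ norms at the archimedean places must be controlled uniformly in $N$, but contributes only an $O(r_N\log r_N)$ correction, which is absorbed once we divide by $N\cdot r_N\sim N^{k+1}$.
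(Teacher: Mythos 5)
Your argument is correct, and it is a legitimate variant of the classical argument, but the specific Minkowski input differs from the paper's. The paper applies the \emph{first} adelic Minkowski theorem (via \cite[Theorem~C.2.11]{bombieri-gubler}): after translating the metric by a constant $c$ so that $\widehat{\mathrm{vol}}_\chi(\bar{M}(\varphi+c))>0$, it extracts a \emph{single} small section $s\in H^0(X,NM)$ satisfying $\log\|s\|_v\leq 0$ for $v\neq v_0$ and $\log\|s\|_{v_0}\leq -\frac{\widehat{\mathrm{vol}}_\chi(\bar{M}(\varphi+c))}{[\mathbb{K}:\mathbb{Q}](k+1)\mathrm{vol}(M)}N+o(N)$, uses genericity to evaluate the height against this one section, and then removes the constant $c$ at the end by linearity of $\widehat{\mathrm{vol}}_\chi$ and of heights under constant twists. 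You instead invoke the \emph{second} Minkowski theorem to produce a full basis with controlled successive minima, avoid simultaneously the (finitely many) divisors of all basis elements, bound the height by each section, and average. The two approaches are the two classical implementations of this step: the single-section route is the one favored by Yuan and is technically lighter --- it needs no Gromov-type comparison between sup- and $L^2$-norms, only the positivity shift by $c$; your basis-and-average route is closer to Zhang's original argument, does not require the auxiliary shift, but does require the $O(r_N\log r_N)$ distortion control (which, as you correctly observe, is harmless after dividing by $Nr_N\sim N^{k+1}$). Both routes correctly exploit genericity to discard the base locus of the chosen sections and both use Hilbert--Samuel to convert the asymptotics into the stated inequality, so the final estimate is identical.

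One small point worth flagging: when you pass from the Minkowski bound to the conclusion you take $\limsup_N$ of the right-hand side. You should say explicitly that this is legitimate because $\liminf_i h_{\bar{M}(\varphi)}(F_i)$ is independent of $N$ and that you pick the subsequence of $N$ achieving the $\limsup$ in the definition of $\widehat{\mathrm{vol}}_\chi$; and note that the threshold $i_0(N)$ from genericity depends on $N$ (you take $\liminf_i$ before letting $N$ grow), so there is no diagonal issue. Neither remark is a gap --- your order of limits is the right one --- but making the quantifiers explicit would match the care taken in the paper's version.
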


\begin{proof}
Taking $c$ large enough, we have 
\[\widehat{\mathrm{vol}}_\chi(\bar{M}(\varphi+c))=\widehat{\mathrm{vol}}_\chi(\bar{M}(\varphi))+c(k+1)\mathrm{vol}(M)>0.\]
By Minkowski's theorem~\cite[Theorem~C.2.11]{bombieri-gubler}, as soon as $\widehat{\mathrm{vol}}_\chi(\bar{M}(\varphi+c))>0$, there exists a non-zero small section $s\in H^0(X, N L)$ such that
\begin{align*}
\log\|s\|_{v_0} &\leq -\frac{\widehat{\mathrm{vol}}_\chi(\bar{M}(\varphi))}{[\mathbb{K}:\mathbb{Q}](k+1)\mathrm{vol}(M)}N+o(N)
\end{align*}
and $\log\|s\|_v\leq 0$ for any other place $v\in M_\mathbb{K}$, where $\|\cdot\|_v$ is the metrization of $N\bar{M}(\varphi+c)$ at the place $v$ induced by that of $\bar{M}$. As $(F_i)_i$ is generic, there is $i_0$ such that $F_i\cap\,\mathrm{supp}(\mathrm{div}(s))=\varnothing$ for $i\geq i_0$. We thus can compute the height $h_{N\bar{M}(\varphi+c)}$ of $F_i$ using this small section $s$ to find
\begin{align*}
h_{\bar{M}(\varphi+c)}(F_i)=\frac{1}{N}h_{N\bar{M}(\varphi+c)}(F_i)&\geq\frac{\widehat{\mathrm{vol}}_\chi(\bar{M}(\varphi+c))}{[\mathbb{K}:\mathbb{Q}](k+1)\mathrm{vol}(M)}+o(1).
\end{align*}
Making $N\to\infty$, this gives
\begin{align}
h_{\bar{M}(\varphi+c)}(F_i)\geq\frac{\widehat{\mathrm{vol}}_\chi(\bar{M}(\varphi+c))}{[\mathbb{K}:\mathbb{Q}](k+1)}.\label{ineg-vol-arithmample}
\end{align}
To conclude, we use $\widehat{\mathrm{vol}}_\chi(\bar{M}(c+\varphi))=\widehat{\mathrm{vol}}_\chi(\bar{M}(\varphi))+c(k+1)\mathrm{vol}(M)$
and we remark that by definition,  for any $c>0$ and any closed point $x\in X(\bar{\mathbb{Q}})$,
\[h_{\bar{M}(c+\varphi)}(x)=h_{\bar{M}(\varphi)}(x)+\frac{c}{[\mathbb{K}:\mathbb{Q}]},\]
so that the inequality~\eqref{ineg-vol-arithmample} is the expected result.
\end{proof}

\subsection{A lower bound on the arithmetic volume}

Let now $L$ be a big and nef $\mathbb{Q}$-line bundle on $X$. We equip $L$ with an adelic continuous semi-positive metric $\{\|\cdot\|_v\}_{v\in M_\mathbb{K}}$ and we denote $\bar{L}:=(L,\{\|\cdot\|_v\}_{v\in M_\mathbb{K}})$. We also fix a Zariski open set $V\subset X$. 
\medskip

Fix a place $v\in M_\mathbb{K}$ and pick any test function $\varphi:X_{v}^\mathrm{an}\to\mathbb{R}$ with compact support in $V_{v}^\mathrm{an}$. Let $\bar{\mathcal{O}}_{v}(\varphi)$ the trivial line bundle on $X$ equipped with the trivial metric at all places but $v$ and equipped with the metric induced by $\varphi$ at the place $v$. For any $t\in\mathbb{R}$, we let
\[\bar{L}(t\varphi):=\bar{L}+t\bar{\mathcal{O}}_{v}(\varphi).\]  The key fact of this section is the next proposition which relies on Yuan's arithmetic bigness criterion \`a la Siu~\cite[Theorem~2.2]{yuan} :

\begin{proposition}\label{prop:almost-Zhang-quantitative}
Let $M$ be a big and nef line bundle on $X$. For any non-constant test function $\varphi:X_{v}^\mathrm{an}\to\mathbb{R}$, and any decomposition $\bar{\mathcal{O}}(\varphi)=\bar{M}_+-\bar{M}_-$ as a difference of big and nef adelic metrized line bundle with underlying line bundle $M$ and any $t>0$, we have
\begin{align*}
\frac{\widehat{\mathrm{vol}}_\chi(\bar{L}(t\varphi))}{[\mathbb{K}:\mathbb{Q}](k+1)\mathrm{vol}(L)}\geq h_{\bar{L}}(X) +&\frac{t}{[\mathbb{K}:\mathbb{Q}]}\int_{X_v^\mathrm{an}} \varphi\,\frac{c_1(\bar{L})_v^k}{\mathrm{vol}(L)}\\
&-t\frac{\sup_{X_v^\mathrm{an}}|\varphi|}{[\mathbb{K}:\mathbb{Q}]}\Bigg(\frac{\mathrm{vol}(L+tM)}{\mathrm{vol}(L)}-1\Bigg)\\
&-\sum_{j=2}^{k+1}\frac{(k+2-j)}{[\mathbb{K}:\mathbb{Q}]\mathrm{vol}(L)} {k+1 \choose j}\left(\bar{L}\right)^{k-j+1}\cdot \left(t\bar{M}_+\right)^{j}.
\end{align*}
Moreover, when $\varphi$ is compactly supported in $V_{v}^\mathrm{an}$, $\sup_{X_v^\mathrm{an}}|\varphi|$ can be replaced by $\sup_{V_v^\mathrm{an}}|\varphi|$ and the integral can be computed on $V_v^\mathrm{an}$.
\end{proposition}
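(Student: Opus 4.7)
The plan is to apply Yuan's arithmetic Siu-type bigness criterion \cite[Theorem~2.2]{yuan} after decomposing $\bar{L}(t\varphi)$ as a difference of two big and nef adelic $\mathbb{Q}$-line bundles. Setting $\bar{N}_+ := \bar{L} + t\bar{M}_+$ and $\bar{N}_- := t\bar{M}_-$, one has $\bar{L}(t\varphi) = \bar{N}_+ - \bar{N}_-$, and both $\bar{N}_\pm$ are big and nef (nef as sums or positive scalings of nef bundles; big because $L$ and $\bar{M}_\pm$ are). Yuan's inequality then yields
\[
\widehat{\mathrm{vol}}_\chi(\bar{L}(t\varphi)) \;\geq\; \bar{N}_+^{k+1} - (k+1)\,\bar{N}_+^{k}\cdot\bar{N}_-,
\]
and the entire proof reduces to expanding the right-hand side and matching it with the stated lower bound.

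The next step is a purely combinatorial expansion. Using multilinearity and symmetry of the arithmetic intersection together with $\bar{M}_- = \bar{M}_+ - \bar{\mathcal{O}}(\varphi)$, I would split the right-hand side into a ``geometric'' part involving only $\bar{L}$ and $\bar{M}_+$ and a ``corrective'' part involving $\bar{\mathcal{O}}(\varphi)$. The geometric part collapses, via the binomial identity $\binom{k+1}{j}-(k+1)\binom{k}{j-1} = -(j-1)\binom{k+1}{j}$, into
\[
\bar{L}^{k+1} \;-\; \sum_{j=2}^{k+1}(j-1)\binom{k+1}{j}\,\bar{L}^{k-j+1}\cdot (t\bar{M}_+)^{j},
\]
which, after dividing by $[\mathbb{K}:\mathbb{Q}](k+1)\mathrm{vol}(L)$, delivers both $h_{\bar{L}}(X)$ and a contribution dominating the last sum in the statement: indeed $(j-1)/(k+1)\leq k+2-j$ for $2\leq j\leq k+1$, while all arithmetic intersections $\bar{L}^{k-j+1}\cdot(t\bar{M}_+)^j$ are non-negative by nefness.

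For the corrective part $(k+1)t\sum_{j=0}^{k}\binom{k}{j}\,\bar{L}^{k-j}\cdot(t\bar{M}_+)^j\cdot\bar{\mathcal{O}}(\varphi)$, I would convert each term into a local integral at $v$ using the formula recalled in Section~\ref{sec:ineg-height}: since $\bar{\mathcal{O}}(\varphi)$ has trivial metric outside $v$ with $\log\|1\|_v^{-1}=\varphi$,
\[
\bar{L}^{k-j}\cdot(t\bar{M}_+)^{j}\cdot\bar{\mathcal{O}}(\varphi) \;=\; \int_{X_v^{\mathrm{an}}}\varphi\, c_1(\bar{L})_v^{k-j}\wedge c_1(t\bar{M}_+)_v^{j}.
\]
The $j=0$ term supplies exactly the main linear-in-$t$ contribution $t\int\varphi\,c_1(\bar{L})_v^k/\mathrm{vol}(L)$ of the stated inequality. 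For $j\geq 1$, semi-positivity of the metrics makes $c_1(\bar{L})_v^{k-j}\wedge c_1(t\bar{M}_+)_v^{j}$ a positive measure of total mass $L^{k-j}\cdot(tM)^j$, so the integral is bounded in absolute value by $\sup_{V_v^{\mathrm{an}}}|\varphi|\cdot L^{k-j}\cdot(tM)^j$. Summing these bounds and recognizing the binomial expansion $(L+tM)^k - L^k = \mathrm{vol}(L+tM) - \mathrm{vol}(L)$ produces, after the same normalization, the second line of the inequality.

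The main obstacle is twofold. First, Yuan's criterion must legitimately apply, which is where the hypothesis that $M$ is big and nef with non-ample locus contained in $X\setminus V$ intervenes: it is precisely what guarantees that a decomposition $\bar{\mathcal{O}}(\varphi) = \bar{M}_+ - \bar{M}_-$ with both pieces big and nef exists. Second, one must arrange the combinatorial bookkeeping so that the $\bar{\mathcal{O}}(\varphi)$-cross-terms with $j\geq 1$ precisely align with the binomial expansion of $\mathrm{vol}(L+tM)-\mathrm{vol}(L)$ and so that only non-negative arithmetic self-intersections remain to be absorbed into the final sum. Beyond these two points the argument is essentially a careful unwinding of multilinear arithmetic intersection products together with the use of semi-positivity to estimate integrals of a test function against positive mixed-curvature measures.
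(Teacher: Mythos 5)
Your proposal is correct and follows the same strategy as the paper's own proof: apply Yuan's arithmetic bigness criterion to the decomposition $\bar{L}(t\varphi) = (\bar{L}+t\bar{M}_+) - t\bar{M}_-$, expand by multilinearity using $t\bar{M}_+ - t\bar{M}_- = \bar{\mathcal{O}}_v(t\varphi)$, extract the local integral at $v$ from the term linear in $\bar{\mathcal{O}}_v(\varphi)$, and absorb the remaining terms into the error bounds via positivity of the mixed curvature measures and the binomial identity $\mathrm{vol}(L+tM) - \mathrm{vol}(L) = (L+tM)^k - L^k$.

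Two points deserve comment. First, your binomial identity $\binom{k+1}{j} - (k+1)\binom{k}{j-1} = -(j-1)\binom{k+1}{j}$ is correct, and you rightly note that $(j-1)/(k+1) \leq 1 \leq k+2-j$ for $2\leq j\leq k+1$ together with nonnegativity of the intersection numbers $\bar{L}^{k+1-j}\cdot(t\bar{M}_+)^j$ yields the stated inequality; this is a cleaner bookkeeping than what appears in the paper's displayed expansion, which writes a coefficient $(k-j)$ that does not match the actual binomial computation (the expansion there appears to contain a typographical error, though since the statement carries the weaker coefficient $(k+2-j)$, the end result is unaffected). Second, you apply Yuan's criterion directly to $\bar{N}_\pm$ because they are ``big and nef''; Yuan's Theorem 2.2 is stated for \emph{ample} hermitian line bundles, and the paper carefully reduces to that case by introducing an auxiliary ample arithmetic line bundle $\bar{L}_0$, considering $q\bar{M}_\pm + \bar{L}_0$, and passing to the limit over multiples. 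Your proposal skips this perturbation step; it is a routine approximation argument, but it should be acknowledged since the big-and-nef case is not literally covered by the cited statement.
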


\begin{proof}
Write $\varphi=\psi_+-\psi_-$, where $\psi_\pm$ is a smooth psh metric on $M_{v}^\mathrm{an}$. On then can write $\bar{\mathcal{O}}_{v}(t\varphi)=t\bar{M}_+-t\bar{M}_-$, where $\bar{M}_\pm$ are the induced metrizations on $M$. Extend both $t\bar{M}_\pm$ as adelic metrized line bundles which coincide at all places $w\neq v$ and pick any ample arithmetic line bundle $\bar{L}_0$ on $X$. As $M$ is nef, for a given $q\geq1$, $q\bar{M}_\pm+\bar{L}_0$ is an ample hermitian line bundle.
The line bundle $q\bar{L}(t\varphi)$ is then the difference of two ample hermitian line bundles:
\[q\bar{L}(t\varphi)=\left(q\bar{L}+tq\bar{M}_+\right)-tq\bar{M}_-=\left(q\bar{L}+t\left(q\bar{M}_++\bar{L}_0\right)\right)-t\left(q\bar{M}_-+\bar{L}_0\right).\]
Apply Yuan's arithmetic bigness criterion~\cite[Theorem~2.2]{yuan} to multiples of $N\bar{L}(t\varphi)$, where $q$ divides $N$ and making $N\to\infty$ gives
\begin{align}
\widehat{\mathrm{vol}}_\chi(\bar{L}(t\varphi))
\geq \left(\bar{L}+t\bar{M}_+\right)^{k+1}-(k+1)\left(\bar{L}+t\bar{M}_+\right)^{k}\cdot \left(t\bar{M}_-\right)\label{ineq:yuan-big}
\end{align}
 Remark that $\left(\bar{L}+t\bar{M}_+\right)^{k+1}-(k+1)\left(\bar{L}+t\bar{M}_+\right)^{k}\cdot \left(t\bar{M}_-
\right)$ expands as
\begin{align*}
\left(\bar{L}\right)^{k+1}+& \sum_{j=1}^{k+1} {k+1 \choose j}\left(\bar{\mathcal{O}}_v(\varphi)\right)\cdot\left(\bar{L}\right)^{k-j+1}\cdot \left(t\bar{M}_+\right)^{j-1}\\
&-\sum_{j=2}^{k+1}(k-j){k+1 \choose j}\left(\bar{L}\right)^{k-j+1}\cdot \left(t\bar{M}_+\right)^{j},
\end{align*}
where we used that $t\bar{M}_+-t\bar{M}_-=\bar{\mathcal{O}}_v(t\varphi)$. As the underlying line bundle of $\bar{\mathcal{O}}_v(\varphi)$ is trivial, we can compute this intersection number with the use of the constant section $1$. The term $j=1$ is exactly the integral 
\[\left(\bar{\mathcal{O}}_v(t\varphi)\right)\cdot\left(\bar{L}\right)^{k}=t\int_{X_v^\mathrm{an}}\varphi\, c_1(\bar{L})_v^k\]
and for $j\geq2$, we can compute
\begin{align*}
\left(\bar{\mathcal{O}}_v(t\varphi)\right)\cdot\left(\bar{L}\right)^{k-j+1}\cdot \left(t\bar{M}_+\right)^{j-1} & =t\int_{X_v^\mathrm{an}}\varphi\, c_1(\bar{L})_v^{k-j+1}\wedge \left(tc_1(M,\psi_+)_v\right)^{j-1}\\
&\geq -t\sup_{X_v^\mathrm{an}}|\varphi|\int_{X_v^\mathrm{an}}c_1(\bar{L})_v^{k-j+1}\wedge \left(tc_1(M,\psi_+)_v\right)^{j-1}\\
&\geq -t\sup_{X_v^\mathrm{an}}|\varphi|(L^{k+1-j}\cdot (tM)^{j-1}).
\end{align*}
Summing from $j=2$ to $k+1$, we find
\begin{align*}
\sum_{j=2}^{k+1} {k+1 \choose j}\left(\bar{\mathcal{O}}_v(\varphi)\right)\cdot\left(\bar{L}\right)^{k-j+1}\cdot\left(t\bar{M}_+\right)^{j} & \geq -t\sup_{X_v^\mathrm{an}}|\varphi|\sum_{j=2}^{k+1} {k+1 \choose j} (L^{k+1-j}\cdot (tM)^{j-1})\\
&\geq -t\sup_{X_v^\mathrm{an}}|\varphi|\sum_{j=1}^{k} \frac{k+1}{j+1} {k \choose j} (L^{k-j}\cdot (tM)^{j})\\
&\geq -t(k+1)\sup_{X_v^\mathrm{an}}|\varphi|\left(\mathrm{vol}(L+tM)-\mathrm{vol}(L)\right).
\end{align*}
The above summarizes as
\begin{align*}
\widehat{\mathrm{vol}}_\chi(\bar{L}(t\varphi))\geq \left(\bar{L}\right)^{k+1}+(k+1)t & \int_{X_v^\mathrm{an}}\varphi\, c_1(\bar{L})_v^k\\
&-t(k+1)\sup_{X_v^\mathrm{an}}|\varphi|\left(\mathrm{vol}(L+tM)-\mathrm{vol}(L)\right)\\
& -\sum_{j=2}^{k+1}(k-j){k+1 \choose j}\left(\bar{L}\right)^{k-j+1}\cdot \left(t\bar{M}_+\right)^{j}.
\end{align*}
The conclusion follows dividing by $(k+1)[\mathbb{K}:\mathbb{Q}]\mathrm{vol}(L)$.

When $\varphi$ is compactly supported in $V_v^\mathrm{an}$, it is obvious that the sup and the integral can be taken over $V_v^\mathrm{an}$. 
\end{proof}

\section{Good height functions and equidistribution}\label{sec:proofgoodheight}

In this section, we prove Theorem~\ref{tm:equidistrib} using the estimates established in Section~\ref{sec:ineg-height}. Let $\mathbb{K}$ be z number field, let $V$ be a smooth quasi-projective variety of dimension $k$ defined over $\mathbb{K}$. 
Let $h$ be a $v$-good height function on $V$ with induced measure $\mu_v$ on $V_v^\mathrm{an}$ and $\mathrm{vol}(h)>0$ be its \emph{volume}. Let $(X_n,\bar{L}_n,\psi_n)$ be given by the definition of good height function.

\subsection{A preliminary lemma}\label{sec:prelim}

Let $\iota_0:X_0\hookrightarrow \mathbb{P}^N$ be an embedding defined over $\mathbb{K}$ and let $M_0$ be an ample line bundle on $X_0$. For $n\geq0$, let
\[\bar{M}_n:=\psi_n^*(\bar{M}_0).\]
Since $\psi_n$ is a birational morphism and since $M_0$ is ample, $M_n$ is big and nef.


An important ingredient is the following:

\begin{lemma}\label{lm:bounded-volume}
The sequence $\mathrm{vol}(L_n+M_n)$ converges to a limit $\ell>0$. In particular, there exists a constant $C\geq1$ such that $0\leq \mathrm{vol}(L_n+M_n)\leq C$, for all $n\geq0$.
\end{lemma}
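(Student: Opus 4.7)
The plan is to expand the top self-intersection multilinearly,
\[
\mathrm{vol}(L_n + M_n) \;=\; (L_n + M_n)^k \;=\; \sum_{j=0}^{k}\binom{k}{j} L_n^{\,k-j}\cdot M_n^{\,j},
\]
and to show each mixed geometric intersection on the right converges. The two extreme terms are immediate: for $j=0$, condition (2) of the definition of a good height gives $L_n^k = \mathrm{vol}(L_n)\to\mathrm{vol}(h)>0$, and for $j=k$, since $\psi_n$ is birational and $M_n=\psi_n^*M_0$, the projection formula yields $M_n^k = M_0^k$, a positive constant independent of $n$. In particular, for $k=1$ there is nothing more to do and $\ell = \mathrm{vol}(h)+M_0>0$.

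For $k>1$ and the middle indices $1\leq j\leq k-1$, I would first use the projection formula for closed positive currents at a fixed archimedean place $v$ to rewrite
\[
L_n^{\,k-j}\cdot M_n^{\,j} \;=\; \int_{X_{0,v}^{\mathrm{an}}} (\psi_n)_*\bigl(c_1(\bar L_n)_v^{\,k-j}\bigr)\wedge c_1(\bar M_0)_v^{\,j}.
\]
The currents $T_{n,j}:=(\psi_n)_* c_1(\bar L_n)_v^{k-j}$ are positive closed $(j,j)$-currents on the compact space $X_{0,v}^{\mathrm{an}}$, so it suffices to control their mass against the Kähler form $c_1(\bar M_0)_v$ and to identify a unique limit. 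The uniform upper bound I would extract from condition~(4): picking a non-trivial generic global section $s\in H^0(X_0,qM_0)$ for a suitable $q\geq 1$, the standard recursive expression of arithmetic intersections of semi-positive adelic line bundles decomposes $(\psi_n^*\bar M_0)^{j+1}\cdot \bar L_n^{\,k-j}$ as the geometric piece $\tfrac1q(\bar M_n^{\,j}\cdot\bar L_n^{\,k-j})|_{\mathrm{div}(\psi_n^*s)}$ plus place-by-place non-negative integrals $\int\log\|s\|_w^{-1}\,c_1(\bar M_n)_w^{\,j}\wedge c_1(\bar L_n)_w^{\,k-j}$. Choosing $s$ with $\log\|s\|_w^{-1}\geq 0$ everywhere and bounded below by a positive constant on a fixed compact piece of $V_v^{\mathrm{an}}$, condition~(4) then forces a uniform bound on $M_n^{\,j}\cdot L_n^{\,k-j}$.

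Boundedness being in hand, weak compactness of positive closed currents produces subsequential limits. For the top case $j=0$, condition~(3) pins down $T_{n,0}\to \mathrm{vol}(h)\mu_v$; matching of the total masses $\mathrm{vol}(L_n)\to\mathrm{vol}(h)=\mathrm{vol}(h)\,\mu_v(V_v^{\mathrm{an}})$ ensures no mass escapes to $X_{0,v}^{\mathrm{an}}\setminus V_v^{\mathrm{an}}$, so the convergence of $L_n^k\cdot 1$ against the smooth function $1$ (and more generally against bounded continuous functions on $X_{0,v}^{\mathrm{an}}$) follows. For the middle indices, I would then identify the subsequential limits using a pluripotential/Bedford–Taylor continuity argument: any weak limit of $T_{n,j}$ is a positive closed $(j,j)$-current on $X_{0,v}^{\mathrm{an}}$ whose pairing with $c_1(\bar M_0)^j$ is determined by the limit of the total Monge–Ampère mass (condition (3) iterated through the layers) rather than by the choice of subsequence, giving a unique numerical limit $\ell_j$ for $L_n^{k-j}M_n^j$. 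Summing, $\ell = \sum_{j=0}^k \binom{k}{j}\ell_j$, and the lower bound $\ell\geq \ell_0=\mathrm{vol}(h)>0$ (since each $\ell_j\geq 0$ by nefness) yields positivity.

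The main obstacle is the bridge between the purely arithmetic bound in condition~(4) and the geometric mixed intersections $L_n^{k-j}\cdot M_n^j$ appearing in $(L_n+M_n)^k$: the decomposition via a section of $qM_0$ must be set up carefully so that the non-negative place integrals genuinely bound the geometric term, uniformly in $n$, and the choice of $s$ must be compatible with all $X_n$ simultaneously (possible since $s$ is pulled back from $X_0$). Uniqueness of the limit beyond subsequential compactness for $j\geq 1$ is the other delicate point; if it turns out that only boundedness, not an identified limit, is needed in the sequel, the convergence portion can be weakened to a $\limsup$ bound — the explicit estimate $0\leq \mathrm{vol}(L_n+M_n)\leq C$ is then what actually enters the proof of Theorem~\ref{tm:equidistrib}.
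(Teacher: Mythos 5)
Your proposal takes a genuinely different, and considerably heavier, route than the paper's. The paper's argument is short: $\mathrm{vol}(M_n)=(M_n)^k=(M_0)^k$ is a constant by the projection formula (as you also observe), $\mathrm{vol}(L_n)\to\mathrm{vol}(h)>0$ gives $\inf_n\mathrm{vol}(L_n)>0$, hence there is $C_1\geq1$ with $\mathrm{vol}(M_n)\leq C_1\mathrm{vol}(L_n)$, and the paper then asserts $\mathrm{vol}(L_n+M_n)=(L_n+M_n)^k\leq(1+C_1)^k\mathrm{vol}(L_n)$ and concludes boundedness. No currents, no compactness, no use of condition~(4), and no attempt at the convergence-to-$\ell$ half of the statement --- which, as you correctly anticipated, is never used; only the uniform bound on $\mathrm{vol}(L_n+M_n)/\mathrm{vol}(L_n)$ enters the proof of Theorem~\ref{tm:equidistrib}.

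You should be aware, however, that the paper's key inequality $\mathrm{vol}(L_n+M_n)\leq(1+C_1)^k\mathrm{vol}(L_n)$ does not follow from $\mathrm{vol}(M_n)\leq C_1\mathrm{vol}(L_n)$ when $k>1$: a bound on the two top self-intersections does not control the mixed terms $L_n^{k-j}\cdot M_n^j$ in the binomial expansion. Khovanskii--Teissier bounds these from \emph{below} by $\mathrm{vol}(L_n)^{(k-j)/k}\mathrm{vol}(M_n)^{j/k}$, and on an abelian surface with the two fibre classes $N_1,N_2$ one checks that $A=N_1+\epsilon N_2$, $B=N_2+\epsilon N_1$ makes $\mathrm{vol}(A+B)/\mathrm{vol}(A)$ blow up while $\mathrm{vol}(B)/\mathrm{vol}(A)$ stays bounded. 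So your instinct is right that for $k>1$ the only available input to control the mixed intersections is condition~(4), and that the genuine difficulty is bridging the arithmetic bounds of~(4) to the geometric numbers $L_n^{k-j}\cdot M_n^j$; the paper's proof sidesteps this issue rather than resolving it. That said, your sketch does not close the gap either: the ``peel off a section of $qM_0$'' step is not carried out (one has to track the geometric piece through $j$ iterations of the induction formula and compare with the arithmetic degree on $\mathrm{div}(\psi_n^*s)$, which is not the geometric intersection number), and the compactness/Bedford--Taylor part of your plan only yields subsequential limits while circularly requiring the very mass bound one is trying to prove. In short: you found the right pressure point, but neither you nor the paper supply a complete argument for the $k>1$ case.
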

\begin{proof}
By definition of the line bundle $M_n$, $M_n$ is big and nef, so that $\mathrm{vol}(M_n)=(M_n)^k=(M_0)^k=\mathrm{vol}(M_0)$, hence it is independent of $n$. 
By the assumption (2) of the definition of good height function, we also have
\begin{align*}
(M_0)^k=\frac{(M_0)^k}{\mathrm{vol}(h)}\mathrm{vol}(L_n)+o(\mathrm{vol}(L_n))
& =\frac{\mathrm{vol}(M_0)}{\mathrm{vol}(h)}\mathrm{vol}(L_n)+o(\mathrm{vol}(L_n)).
\end{align*}
In particular, there exists a constant $C_1\geq1$ independent of $n$ such that $\mathrm{vol}(M_n)\leq C_1\mathrm{vol}(L_n)$ and thus
\[\mathrm{vol}(L_n+M_n)=(L_n+M_n)^k\leq (1+C_1)^k(L_n)^k= (1+C_1)^k\mathrm{vol}(L_n).\]
As $\mathrm{vol}(L_n)$  is bounded, the conclusion follows. 
\end{proof}

\subsection{Proof of Theorem~\ref{tm:equidistrib} and of Theorem~\ref{tm:equidistrib-proj}}
We give here the proof of Theorem~\ref{tm:equidistrib} and we explain where to adapt the arguments to prove Theorem~\ref{tm:equidistrib-proj}.
Let $v\in M_\mathbb{K}$ be such that $h$ is $v$-good and let $\varphi:V_v^\mathrm{an}\to\mathbb{R}$ be a test function on $V_v^\mathrm{an}$.
For $t>0$, we define an adelic metrized line bundle by
\[\bar{L}_n(t\varphi_n):=\bar{L}_n+\bar{\mathcal{O}}(t\varphi_n).\]
Let us first prove the result for a sequence $(F_i)_i$ of generic and $h$-small Galois-invariant finite subsets $F_i\subset V(\bar{\mathbb{Q}})$ and let $F_{i,n}:=\psi_n^{-1}(F_i)$. (Here we assume $\{F_i\}_i$ is quasi-small in the quasi-height case).

 We apply Proposition~\ref{prop:almost-Zhang-quantitative} and Lemma~\ref{lm:Minkowski} to find
\begin{align*}
\liminf_{i\to\infty}h_{\bar{L}_n(t\varphi_n)}(F_{i,n})  \geq  h_{\bar{L}_n}(X_n) & -\frac{t}{[\mathbb{K}:\mathbb{Q}]}\int_{V_v^\mathrm{an}}\varphi_n\frac{c_1(\bar{L}_n)_v^k}{\mathrm{vol}(L_n)}\\
&-t^2\frac{\sup_{V_v^\mathrm{an}}|\varphi|}{[\mathbb{K}:\mathbb{Q}]}\Bigg(\frac{\mathrm{vol}(L_n+M_n)}{\mathrm{vol}(L_n)}-1\Bigg)\\
& -\sum_{j=2}^{k+1}\frac{(k-j)}{[\mathbb{K}:\mathbb{Q}]\mathrm{vol}(L)} {k+1 \choose j}\left(\bar{L}_n\right)^{k-j+1}\cdot \left(t\bar{M}_{n,+}\right)^{j}
\end{align*}
for any decomposition $\bar{\mathcal{O}}_v(\varphi_n)=\bar{M}_{n,+}-\bar{M}_{n,-}$, where $\bar{M}_{n,\pm}$ is a semi-positive continuous metrized big and nef line bundle on $(X_n)_v^\mathrm{an}$ with underlying line bundle $M_n$, as defined in section~\ref{sec:prelim}. If we write $\varphi=\psi_+-\psi_-$ where $\psi_\pm$ are metrizations on $M_{0,v}^\mathrm{an}$, then one can obviously write $\varphi_{n}=\psi_+\circ \psi_n-\psi_-\circ \psi_n$. We thus have $\bar{M}_{n,+}=\psi_n^*(\bar{M}_{0,+})$.

As $\mathrm{vol}(L_n)\to\mathrm{vol}(h)>0$, by  Lemma~\ref{lm:bounded-volume}, there exists $C_1\geq1$ independent of $n$ such that
\[\frac{\mathrm{vol}(L_n+M_n)}{\mathrm{vol}(L_n)}\leq C_1,\]
for any $n\geq0$. Using hypothesis (4), we deduce there is $C_2\geq1$ independent of $n$ such that for any $0<t<1$ we find
\begin{align*}
\liminf_{i\to\infty}h_{\bar{L}_n(t\varphi_n)}(F_{i,n}) & \geq h_{\bar{L}_n}(X_n)-\frac{t}{[\mathbb{K}:\mathbb{Q}]}\int_{(X_n)_v^\mathrm{an}}\varphi_n\frac{c_1(\bar{L}_n)_v^k}{\mathrm{vol}(L_n)}-C_2t^2.
\end{align*}
By definition of $\bar{L}_n(t\varphi_n)$, we can compute
\begin{align*}
h_{\bar{L}_n(t\varphi_n)}(F_{i,n})& =h_{\bar{L}_n}(F_{i,n})+\frac{t}{[\mathbb{K}:\mathbb{Q}]\# F_i}\sum_{y\in F_i}\varphi(y).
\end{align*}
Using that the sequence $(F_i)_i$ is $h$-small and generic, assumption 1. gives
\begin{align*}
\liminf_{i\to\infty}h_{\bar{L}_n(t)}(F_{i,n})\leq \varepsilon_n(\{F_i\}_i)+h_{\bar{L}_n}(X_n)+\liminf_{i\to+\infty}\frac{t}{[\mathbb{K}:\mathbb{Q}]F_i}\sum_{y\in F_i}\varphi(y),
\end{align*}
for $0<t<1$. Combined with the above and divided by $t/[\mathbb{K}:\mathbb{Q}]$, this gives
\begin{align*}
\liminf_{i\to+\infty}\left(\frac{1}{\# F_i}\sum_{y\in F_i}\varphi(y)-\int_{(X_n)_v^\mathrm{an}}\varphi_n\,\frac{c_1(\bar{L}_n)_v^{k}}{\mathrm{vol}(L_n)}\right)\geq-\frac{\varepsilon_n(\{F_i\}_i)}{t}-C_2t
\end{align*}
for all $0<t<1$.
Replacing $\varphi$ by $-\varphi$ gives the converse inequality, whence
\begin{align*}
\limsup_{i\to+\infty}\left|\frac{1}{\# F_i}\sum_{y\in F_i}\varphi(y)-\int_{(X_n)_v^\mathrm{an}}\varphi_n\,\frac{c_1(\bar{L}_n)_v^{k}}{\mathrm{vol}(L_n)}\right|\leq\frac{\varepsilon_n(\{F_i\}_i)}{t}+C_1t,
\end{align*}
for all $0<t<1$. One can also remark that since $\psi_n$ is an isomorphism over $V$, we have
\begin{align*}
\int_{(X_n)_v^\mathrm{an}}\varphi_n\,\frac{c_1(\bar{L}_n)_v^{k}}{\mathrm{vol}(L_n)} & = \int_{X_v^\mathrm{an}}\varphi\,\frac{(\psi_n)_*c_1(\bar{L}_n)_v^{k}}{\mathrm{vol}(L_n)}.
\end{align*}
In the quasi-height, we use here that the measure $c_1(\bar{L}_n)_v^{k}$ doesn't give mass to Zariski closed subsets of $X_n$, since it is induced by a smooth metrization on $L_n$.

Fix now $\varepsilon>0$ small. We now let $t:= \varepsilon/2C_2>0$. By assumption, we can choose $n_0\geq1$ such that we have $\frac{\varepsilon_n(\{x_i\}_i)}{t}\leq \varepsilon/2$ for any $n\geq n_0$. Therefore, for $n\geq n_0$,
\begin{align}
\limsup_{i\to+\infty}\left|\frac{1}{\# F_i}\sum_{y\in F_i}\varphi(y)-\int_{V_v^\mathrm{an}}\varphi\,\frac{(\psi_n)_*c_1(\bar{L}_n)_v^{k}}{\mathrm{vol}(L_n)}\right|\leq\varepsilon.\label{ineq-almost-there-na}
\end{align}
By assumption $(2)$ of the definition of good height, we can choose $n_0$ such that for any $n\geq n_0$,
\[\left|\int_{X_v^\mathrm{an}}\varphi\, \frac{(\psi_n)_*c_1(\bar{L}_n)_v^{k}}{\mathrm{vol}(L_n)}-\int_{X_v^\mathrm{an}}\varphi\, \mu_v\right|\leq \left(1+\|\varphi\|_{L^{\infty}}\right)\varepsilon.\]
Combined with \eqref{ineq-almost-there-na}, this completes the proof for generic and $h$-small sequences (we use that $\varphi$ is compactly supported in $V_v^\mathrm{an}$ in the case of good heights).

~

We now show how to deduce the full statement of Theorem~\ref{tm:equidistrib}, proceeding as in \cite[\S 5.5]{favregauthier}. Let us enumerate all irreducible hypersurfaces $(H_\ell)_\ell$ of $V$ that are defined over $\mathbb{K}$. We use the next lemma, see~e.g.~\cite[Lemma~5.12]{favregauthier}.

\begin{lemma}\label{lm:genericlike}
Take a sequence $(F_n)_n$ of Galois-invariant finite subsets of $V(\bar{\mathbb{Q}})$ with
\[\lim_{n\to\infty}\frac{\# (F_n\cap H_\ell)}{\# F_n}=0,\]
for any $\ell$.
Then, for any $\epsilon>0$, there exists a
sequence of sets $F'_{n,\epsilon} \subset F_k$ such that:
\begin{enumerate}
\item
$\# F'_{n,\epsilon} \geq (1-\epsilon) \# F_n$ for all $n$,
\item
$F'_{n,\epsilon}$ is  Galois-invariant,
\item
for any $\ell$ there exists $N(\ell)\geq1$, such that $F'_{n,\epsilon} \cap H_\ell = \varnothing$ for all $n\ge N(\ell)$.
\end{enumerate}
\end{lemma}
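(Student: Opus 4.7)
The plan is to carry out a standard diagonal truncation argument. For each $\ell\geq 1$, the hypothesis that $\#(F_n\cap H_\ell)/\#F_n\to 0$ as $n\to\infty$ lets me pick an integer $N_\ell\geq 1$ with
\[
\#(F_n\cap H_\ell)\leq \frac{\epsilon}{2^{\ell+1}}\,\# F_n \quad\text{for all } n\geq N_\ell,
\]
and after replacing $N_\ell$ by $\max(N_1,\ldots,N_\ell)$ I may assume the sequence $(N_\ell)_\ell$ is non-decreasing.

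Next I would introduce the cutoff level $L(n):=\max\{\ell\geq 1\,:\,N_\ell\leq n\}$, with the convention $L(n)=0$ when the set is empty, and define
\[
F'_{n,\epsilon}:=F_n\setminus \bigcup_{\ell=1}^{L(n)} H_\ell.
\]
Since each $N_\ell$ is finite, $L(n)\to\infty$ as $n\to\infty$, so the family of excised hypersurfaces exhausts the enumeration $(H_\ell)_\ell$ while remaining finite at every stage.

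It then remains to verify the three properties. Property (2) is immediate since each $H_\ell$ is defined over $\mathbb{K}$, hence Galois-invariant, so the intersections $F_n\cap H_\ell$ and the set difference defining $F'_{n,\epsilon}$ preserve Galois invariance. For (1), the case $L(n)=0$ is trivial, while for $L(n)\geq 1$ I have $n\geq N_\ell$ for every $1\leq \ell\leq L(n)$, and therefore
\[
\# F_n-\# F'_{n,\epsilon}\leq \sum_{\ell=1}^{L(n)} \#(F_n\cap H_\ell)\leq \# F_n\sum_{\ell=1}^{L(n)}\frac{\epsilon}{2^{\ell+1}}\leq \frac{\epsilon}{2}\,\# F_n,
\]
which is even stronger than the required bound. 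For (3), given $\ell$ one takes $N(\ell):=N_\ell$: for $n\geq N(\ell)$ one has $L(n)\geq \ell$, so $H_\ell$ has been excised and $F'_{n,\epsilon}\cap H_\ell=\varnothing$.

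I do not anticipate a serious obstacle. The only mildly delicate point is that the size bound (1) must hold for \emph{every} $n$ and not merely asymptotically; this is what forces the summable tolerances $\epsilon/2^{\ell+1}$ together with the $n$-dependent cutoff $L(n)$, which ensures that only finitely many hypersurfaces are removed at each stage and that their total contribution stays uniformly controlled.
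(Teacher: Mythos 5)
The paper does not prove this lemma: it simply cites \cite[Lemma~5.12]{favregauthier}. Your diagonal truncation argument is the standard proof of this kind of statement and is essentially correct: the choice of thresholds $N_\ell$ with summable tolerances $\epsilon/2^{\ell+1}$, the cutoff $L(n)$, and the verification of properties (1)--(3) all go through.

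One small point worth tightening is the definition of $L(n)$. After replacing $N_\ell$ by $\max(N_1,\dots,N_\ell)$ the sequence is non-decreasing but could be bounded (for instance if the $F_n$ are already generic), in which case the set $\{\ell\ge 1: N_\ell\le n\}$ is all of $\mathbb{N}$ for $n$ large, so the maximum is not attained and your remark that the excised family is ``finite at every stage'' fails as stated. This is harmless and can be repaired in either of two ways: replace $N_\ell$ by $\max(N_1,\dots,N_\ell,\ell)$ so that $L(n)\le n$ is always finite; or drop $L(n)$ altogether and set $F'_{n,\epsilon}:=F_n\setminus\bigcup_{\ell:\,N_\ell\le n}H_\ell$, observing that
\[
\# F_n-\# F'_{n,\epsilon}\ \le\ \sum_{\ell:\,N_\ell\le n}\#(F_n\cap H_\ell)\ \le\ \# F_n\sum_{\ell\ge 1}\frac{\epsilon}{2^{\ell+1}}\ =\ \frac{\epsilon}{2}\,\# F_n
\]
whether or not the index set is finite, because the geometric series converges. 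Properties (2) and (3) are unaffected, so the proof stands.
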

Fix $\epsilon>0$. The last condition of Lemma \ref{lm:genericlike} 
implies $F'_{n,\epsilon}$ to be generic. 
Now pick any continuous function $\varphi\in\mathscr{C}^0_c(V_v^\mathrm{an})$. The above implies that there is $n_0\geq1$ such that
\begin{align*}
\left|\int_{V_v^\mathrm{an}} \varphi\, \mu_{F_n,v} - \int_{V_v^\mathrm{an}}  \varphi \, \mu_v\right|
&\leq
\left|\int_{V_v^\mathrm{an}}  \varphi\,  \mu_{F_n,v} -  \int_{V_v^\mathrm{an}}  \varphi\,\mu_{F'_{n,\epsilon},v}\right| +\epsilon.
\end{align*}
We infer from Lemma~\ref{lm:genericlike} that
\begin{align*}
\left|\int_{V_v^\mathrm{an}} \varphi\, \mu_{F_n,v} - \int_{V_v^\mathrm{an}}  \varphi \, \mu_v\right|  \leq & 
\frac{1}{\# F_n}\int_{V_v^\mathrm{an}}|\varphi|\sum_{x\in F_n\setminus F'_{n,\epsilon}}\delta_x\\
&  + \left( \frac1{\# F'_{n,\epsilon}} - \frac1{\# F_n} \right)\int_{V_v^\mathrm{an}}|\varphi|\sum_{x\in F'_{n,\epsilon}}\delta_x\\
& \leq  2\,\epsilon\sup_{V_v^\mathrm{an}} |\varphi|~.
\end{align*}
This shows that for $n\geq n_0$, we have
\[\left|\int_{V_v^\mathrm{an}}\varphi\, \mu_{F_n,v}-\int_{V_v^\mathrm{an}}\varphi\, \mu_{v}\right|\leq \left(1+2\sup_{V_v^\mathrm{an}}|\varphi|\right)\epsilon,\]
which concludes the proof.

\subsection{Quasi-adelic measures on the projective line are good}\label{sec:qa}
Let $\mathbb{K}$ be a number field. We prove here that quasi-adelic measures on $\mathbb{P}^1$ as defined by Mavraki and Ye~\cite{mavraki-ye-quasiadelic}, and their induced height functions, satisfy the assumptions of Theorem~\ref{tm:equidistrib}. 

~

Let us defined quasi-adelic measures and their height functions following Mavraki and Ye. Pick $v\in M_\mathbb{K}$. Write $\log^+|\cdot|_v:=\log\max\{|\cdot|_v,1\}$ on $\mathbb{A}^1(\C_v)$. This function extends to $\mathbb{A}^{1,\mathrm{an}}_v$ and $dd^c \log^+|\cdot|_v=\delta_\infty-\lambda_v,$
as a function from $\mathbb{P}^{1,\mathrm{an}}_v$ to $\R_+\cup\{+\infty\}$, where $\lambda_v$ is the Lebesgue measure on $\{|z|_v=1\}$ if $v$ is archimedean, and $\lambda_v=\delta_{G,v}$ is the dirac mass at the Gau{\ss} point of $\mathbb{P}^{1,\mathrm{an}}_v$ otherwise.

A probability measure on $\mathbb{P}^{1,\mathrm{an}}_v$ has continuous potential if $\mu_v-\lambda_v=dd^c g_v$ for some $g_v\in\mathscr{C}^0(\mathbb{P}^{1,\mathrm{an}}_v,\mathbb{R})$. In this case, there is a unique function $g_{\mu_v}:\mathbb{P}^{1,\mathrm{an}}_v\to\R\cup\{+\infty\}$ such that $g_v=\log^+|\cdot|_v-g_{\mu_v}$ with the following normalization: if we let
\[G_{\mu_v}(x,y):=\left\{\begin{array}{ll}
g_{\mu_v}(x/y)+\log|y|_v & \text{for} \ (x,y)\in\C_v\times(\C_v\setminus\{0\}),\\
\log|x|_v-g_v(\infty) & \text{for} \ (x,y)\in(\C_v\setminus\{0\})\times\{0\},\\
-\infty & \text{for} \ (x,y)=(0,0),
\end{array}\right.\]
then the set $M_{\mu_v}:=\{(x,y)\in \C_v^2\, : \ G_{\mu_v}(x,y)\leq0\}$ has homogeneous logarithmic capacity $1$. Note that for $\alpha\in \C_v\setminus\{0\}$ and $(x,y)\in \C_v^2\setminus\{(0,0)\}$, we have 
\begin{center}
$G_{\mu_v}(\alpha x,\alpha y)=G_{\mu_v}(x,y)+\log|\alpha|_v$.
\end{center}
We then define the inner (resp. outer) radius of $\mu_v$ as
\[\left\{\begin{array}{l}
r_{\mathrm{in}}(\mu_v):=\sup\{r>0\, : \ \bar{D}_v(0,r)\times\bar{D}_v(0,r)\subset M_{\mu_v}\},\\
r_{\mathrm{out}}(\mu_v):=\inf\{r>0\, : \ M_{\mu_v}\subset \bar{D}_v(0,r)\times\bar{D}_v(0,r) \}
\end{array}\right.\]

\begin{definition}
We say that $\mu=\{\mu_v\}_{v\in M_\mathbb{K}}$ is a \emph{quasi-adelic} measure on $\mathbb{P}^1$ if 
\begin{itemize}
\item for any $v\in M_\mathbb{K}$, the measure $\mu_v$ has continuous potential,
\item both series $\sum_{v\in M_\mathbb{K}}|\log r_{\mathrm{in}}(\mu_v)|$ and $\sum_{v\in M_\mathbb{K}}|\log r_{\mathrm{out}}(\mu_v)|$ converge.
\end{itemize}
The \emph{height function} $h_\mu$ induced by a quasi-adelic measure $\mu=\{\mu_v\}_{v\in M_\mathbb{K}}$ is defined as
\[h_\mu(z):=\frac{1}{[\mathbb{K}:\mathbb{Q}]\#\mathsf{O}(x)}\sum_{v\in M_\mathbb{K}}\sum_{\sigma:\mathbb{K}(x)\hookrightarrow\C_v}G_{\mu_v}(\sigma(x),\sigma(y))),\]
where $(x,y)\in \mathbb{A}^2(\bar{\mathbb{Q}})\setminus\{(0,0)\}$ is any point with $z=[x:y]$.
\end{definition}
As the functions $G_{\mu_v}$ are homogeneous, the product formula implies the height function $h_\mu$ is well-defined and independent of the choice of $(x,y)$.

~

We prove here the following.
\begin{proposition}
Let $\mathbb{K}$ be a number field and let $\mu:=\{\mu_v\}_{v\in M_\mathbb{K}}$ be a quasi-adelic measure on $\mathbb{P}^1$ with induced height function $h_\mu$. Then $h_\mu$ is a good height function on $\mathbb{P}^1$ with induced global measure $\{\mu_v\}_{v\in M_\mathbb{K}}$.
\end{proposition}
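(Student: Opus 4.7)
The plan is to take, for every $n$, the model $X_n:=\mathbb{P}^1$ with $\psi_n:=\mathrm{id}$ and the line bundle $L_n:=\mathcal{O}_{\mathbb{P}^1}(1)$, and to construct $\bar{L}_n$ by \emph{truncating the quasi-adelic datum} at a growing finite set of places. Fix any exhaustion $M_\mathbb{K}=\bigcup_n S_n$ by an increasing sequence of finite subsets; equip $\mathcal{O}(1)$ at each place $v\in S_n$ with the semi-positive continuous metric whose curvature is $\mu_v$ (given by the continuous potential $g_{\mu_v}$), and at each place $v\notin S_n$ with the standard Weil metric (curvature $\lambda_v$). Only finitely many places then deviate from the Weil metric, so $\bar{L}_n$ is adelic, semi-positive and continuous, and $L_n=\mathcal{O}(1)$ is ample, hence big and nef, yielding (c). Since $\dim V=1$ condition (4) is vacuous, $\mathrm{vol}(L_n)=1$ gives (2) with $\mathrm{vol}(h_\mu):=1$, and (3) is immediate since for any fixed place $v$ we have $c_1(\bar{L}_n)_v=\mu_v$ for all $n$ large enough.

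The key analytic estimate is a uniform sup-norm comparison between $h_{\bar{L}_n}$ and $h_\mu$. The continuous function $g_v:=\log^+|\cdot|_v-g_{\mu_v}$ on $\mathbb{P}^{1,\mathrm{an}}_v$ can be bounded, using the definitions of the inner and outer radii, by $\sup|g_v|\leq|\log r_{\mathrm{in}}(\mu_v)|+|\log r_{\mathrm{out}}(\mu_v)|=:C_v$, so the quasi-adelic hypothesis forces $\eta_n:=\sum_{v\notin S_n}C_v\to 0$. Unwinding the two height formulae then yields
\[h_{\bar{L}_n}(z)-h_\mu(z)=\frac{1}{[\mathbb{K}:\mathbb{Q}]\#\mathsf{O}(z)}\sum_{v\notin S_n}\sum_{\sigma:\mathbb{K}(z)\hookrightarrow\C_v}g_v(\sigma z),\]
so $|h_{\bar{L}_n}(z)-h_\mu(z)|\leq \eta_n/[\mathbb{K}:\mathbb{Q}]$ uniformly in $z\in\mathbb{P}^1(\bar{\mathbb{Q}})$. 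In particular $\limsup_i h_{\bar{L}_n}(F_i)\leq \eta_n/[\mathbb{K}:\mathbb{Q}]$ for any generic $h_\mu$-small sequence $(F_i)_i$.

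The hardest step is expected to be the identity $h_{\bar{L}_n}(X_n)=0$ for every $n$, after which (1) follows at once since $\varepsilon_n(\{F_i\}_i)\leq \eta_n/[\mathbb{K}:\mathbb{Q}]\to 0$. My plan is to decompose the arithmetic self-intersection $\bar{L}_n\cdot\bar{L}_n$ place by place: the contribution at each $v\in S_n$ vanishes thanks to the normalization that the homogeneous logarithmic capacity of $M_{\mu_v}$ equals $1$ (this is the standard identification between Zhang's height and the weighted logarithmic energy on $\mathbb{P}^1$), while at each $v\notin S_n$ the Weil self-intersection of $\mathcal{O}(1)$ is also zero. The delicate point is to make these normalization conventions match in the Berkovich setting at non-archimedean places. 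Lastly, a Galois-invariant generic $h_\mu$-small sequence required by (a) can be produced by a Fekete--Szeg\H{o} type theorem applied to the quasi-adelic datum (in the spirit of Rumely), or supplied directly in the dynamical settings where such heights arise by taking preperiodic orbits of the underlying map. Together, these steps identify $h_\mu$ as a good height function with induced global measure $\{\mu_v\}_{v\in M_\mathbb{K}}$.
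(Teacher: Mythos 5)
Your construction mirrors the paper's almost exactly: truncate the quasi-adelic datum at a growing finite set of places to obtain adelic semi-positive continuous metrizations $\bar{L}_n$ on $\mathcal{O}_{\mathbb{P}^1}(1)$, observe $\mathrm{vol}(L_n)=1$, read off $c_1(\bar{L}_n)_v=\mu_v$ for $n$ large, and control $h_{\bar{L}_n}-h_\mu$ uniformly on $\mathbb{P}^1(\bar{\mathbb{Q}})$ via the inner and outer radii. Conditions (2), (3), (4) are handled the same way as in the paper. However, the two places you leave as ``plans'' are precisely where the paper supplies the nontrivial inputs, so those are genuine gaps you would need to close.

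For the identity $h_{\bar{L}_n}(\mathbb{P}^1)=0$, rather than decomposing the arithmetic self-intersection $(\bar{L}_n)^2$ into local energy contributions and matching normalizations (which is morally correct but delicate, especially at non-archimedean places as you acknowledge), the paper invokes Rumely's arithmetic Hilbert--Samuel theorem \cite[Theorem~A]{Rumely-existence} to obtain $\widehat{\mathrm{vol}}(\bar{L}_n)=(\bar{L}_n)^2=-\sum_{j\leq n}\log\mathrm{cap}_{v_j}(M_{\mu_{v_j}})=0$ directly from the capacity normalization built into $G_{\mu_v}$; this simultaneously furnishes, for each fixed $n$, a generic sequence $(F_{i,n})_i$ with $h_{\bar{L}_n}(F_{i,n})\to 0$. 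For condition (a), where you gesture at a Fekete--Szeg\H{o} argument or a dynamical source of preperiodic points, the paper instead uses the uniform bound $|h_{\bar{L}_n}-h_\mu|\le\varepsilon(n)$ to get $\limsup_i|h_\mu(F_{i,n})|\le\varepsilon(n)$ and then performs a diagonal extraction over $n$ to produce a single generic $h_\mu$-small sequence $(\tilde{F}_i)_i$. This is more self-contained and handles all places on the same footing; I would recommend replacing your appeal to Fekete--Szeg\H{o} with this diagonal argument, since it needs nothing beyond what is already established at that point of the proof.
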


\begin{proof}

Enumerate the places of $\mathbb{K}$ as $M_\mathbb{K}:=\{v_n, \ n\geq 0\}$ and define $X_n=\mathbb{P}^1$ and $\bar{L}_n$ as $\mathcal{O}_{\mathbb{P}^1}(1)$ endowed with the adelic continous semi-positive metrization $\{|\cdot|_{v,n}\}_{v\in M_\mathbb{K}}$ defined by the following conditions:
\begin{itemize}
\item for any $j\geq n+1$, the metric $|\cdot|_{v_j,n}$ is the usual naive metric on $\mathcal{O}_{\mathbb{P}^1}(1)$ at place $v_j$,
\item for any $j\leq n$, the metric is that induced by $g_{\mu_{v_j}}$,
\end{itemize}
so that for any $z\in\mathbb{P}^1(\bar{\mathbb{Q}})$ and any $(x,y)\in\mathbb{A}^2(\bar{\mathbb{Q}})\setminus\{(0,0)\}$ with $z=[x:y]$, we have
\[h_{\bar{L}_n}(z)=\frac{1}{[\mathbb{K}:\mathbb{Q}]\#\mathsf{O}(z)}\sum_{\sigma:\mathbb{K}(x)\hookrightarrow\C_v}\left(\sum_{j=0}^nG_{\mu_{v_j}}(\sigma(x),\sigma(y))+\sum_{\ell=n+1}^\infty\log\|\sigma(x),\sigma(y))\|_{v_\ell}\right).\]
For any $n\geq0$, we have $\mathrm{vol}(L_n)=1$ and for all $v\in M_\mathbb{K}$, we have $c_1(\bar{L}_n)_v=\mu_v$ if $n$ is large enough. All there is left to prove is that there is a $h$-small sequence and that the condition of pointwise approximation is satisfied on $\mathbb{P}^1$.

\medskip

The definitions of $r_{\mathrm{in}}(\mu_v)$ and $r_{\mathrm{out}}(\mu_v)$ and the product formula give
\[\sum_{j=n+1}^\infty\frac{\log (r_{\mathrm{in}}(\mu_v))}{[\mathbb{K}:\mathbb{Q}]}\leq h_{\bar{L}_n}-h_\mu\leq \sum_{j=n+1}^\infty\frac{\log (r_{\mathrm{out}}(\mu_v))}{[\mathbb{K}:\mathbb{Q}]},\]
on $\mathbb{P}^1(\bar{\mathbb{Q}})$. In particular, if we let
\[\varepsilon(n):=\sum_{j=n+1}^\infty\frac{1}{[\mathbb{K}:\mathbb{Q}]}\max\{|\log r_{\mathrm{in}}(\mu_v)|,|\log r_{\mathrm{out}}(\mu_v)|\},\]
then $\varepsilon(n)\to0$ as $n\to\infty$, by assumption, and 
\begin{align}
|h_{\bar{L}_n}-h_\mu|\leq \varepsilon(n), \quad \text{on} \ \mathbb{P}^1(\bar{\mathbb{Q}}).\label{eq:compare-qa}
\end{align}
We are thus left with justifying the existence of a small $h_\mu$-sequence to conclude the proof. Let $\bar{L}_n$ be a model of $\bar{L}_n$ over $\mathbb{P}^1_{\mathscr{O}_\mathbb{K}}$. By the arithmetic Hilbert-Samuel theorem as stated in \cite[Theorem~A]{Rumely-existence}, we have
\[0=-\sum_{j=0}^n\log \mathrm{cap}_{v_j}(M_{\mu_{v_j}})=\widehat{\mathrm{vol}}(\bar{L}_n)=\left(\bar{L}_n\right)^2,\]
since we assumed $\log \mathrm{cap}_{v_j}(M_{\mu_{v_j}})=0$ for all $j$.
In particular, $h_{\bar{L}_n}(\mathbb{P}^1)=0$ and the assumption $1$ of the definition of good height function is satsified.

Finally, for any $n\geq0$, there is a generic sequence $(F_{i,n})_i$ of Galois invariant finite subsets of $\mathbb{P}^1(\bar{\mathbb{Q}})$ such that $h_{\bar{L}_n}(F_{i,n})\to0$, as $i\to\infty$. In particular, By inequality \eqref{eq:compare-qa},
\[\limsup_{i\to\infty}|h_\mu(F_{i,n})|\leq\varepsilon(n),\]
and a diagonal extraction argument implies there exists $(\tilde{F}_i)_i$ which is generic and such that $\limsup_{i\to\infty}|h_\mu(\tilde{F}_i)|\to0$, and the proof is complete.
\end{proof}

\section{Equidistribution in families of dynamical systems}\label{sec:distrib-para}
We give here an application of Theorem~\ref{tm:equidistrib} in families of polarized endomorphisms with marked points. We begin with a general result and then we focus on special cases.

\subsection{Families of polarized endomorphisms}\label{sec:families}
Let $S$ be a smooth quasi-projective variety of dimension $p\geq1$ and let $\pi:\mathcal{X}\to S$ be a family of smooth projective varieties of dimension $k\geq p$. We say $(\mathcal{X},f,\mathcal{L})$ is a family of \emph{polarized endomorphisms} if $f:\mathcal{X}\to\mathcal{X}$ is a morphism with $\pi\circ f=\pi$ and if there is a relatively ample line bundle $\mathcal{L}$ on $\mathcal{X}$ and an integer $d\geq2$ such that $f^*\mathcal{L}\simeq \mathcal{L}^{\otimes d}$.
When $(\mathcal{X},f,\mathcal{L})$ is defined over $\bar{\mathbb{Q}}$, following Call and Silverman~\cite{CS-height}, for any $t\in S(\bar{\mathbb{Q}})$ we can define a \emph{canonical height} function for the restriction $f_t:X_t\to X_t$ of $f$ to the fiber $X_t:=\pi^{-1}\{t\}$ of $\pi:\mathcal{X}\to S$ as
\[\widehat{h}_{f_t}(x)=\lim_{n\to\infty}\frac{1}{d^n}h_{X_t,L_t}\left(f_t^{\circ n}(x)\right), \quad x\in X_t(\bar{\mathbb{Q}}).\]
If $\mathbb{L}$ is a finite extension of $\mathbb{K}$ such that $t\in S(\mathbb{L})$, the canonical height $\widehat{h}_{f_t}$ is induced by an adelic semi-positive continuous metrization $\{\|\cdot\|_{f_t,v}\}_{v\in M_\mathbb{L}}$ on the ample line bundle $\mathcal{L}_t$ of $X_t$. We then have
\begin{enumerate}
\item $\widehat{h}_{f_t}(x)\geq0$ for all $x\in X_t(\bar{\mathbb{Q}})$,
\item $\widehat{h}_{f_t}-h_{X_t,L_t}=O(1)$ on $ X_t(\bar{\mathbb{Q}})$, where $O(1)$ depends on $t$, and
\end{enumerate}
the function $\widehat{h}_{f_t}$ is characterized by those two properties. Moreover, by the Northcott property, for any $x\in X_t(\bar{\mathbb{Q}})$ we have
\begin{enumerate}
\item[3.] $\widehat{h}_{f_t}(x)=0$ if and only if $x$ is preperiodic under iteration of $f_t$.
\end{enumerate}

\begin{center}
$\dag$
\end{center}

When $\pi:\mathcal{X}\to S$, $\mathcal{L}$ and $f$ are all defined over $\C$, we can associate to $(\mathcal{X},f,\mathcal{L})$ a closed positive $(1,1)$-current on $\mathcal{X}(\C)$ with continuous potentials which we can define as
\begin{align}
\widehat{T}_{f}:=\lim_{n\to\infty}\frac{1}{d^n}\left(f^{\circ n}\right)^*(\iota^*\omega_{\mathbb{P}^N}),\label{def:T_f}
\end{align}
where $\iota:\mathcal{X}\hookrightarrow \mathbb{P}^N$ induces (a large power of) $\mathcal{L}\otimes\pi^*(\mathcal{M)}$, where $\mathcal{M}$ is ample on a projective model $\bar{S}$ of $S$. Moreover, the convergence towards $\widehat{T}_{f}$ is uniform local for potentials, see, e.g.~\cite[Proposition~2.6]{GV_Northcott}. This current $\widehat{T}_f$ restricts on fibers $X_t(\C)$ of $\pi$ as the \emph{Green current} of the endomorphism $f_t$ of $X_t(\C)$ which is polarized by $L_t$. Moreover, the closed positive $(k,k)$-current $\widehat{T}_{f}^{k}$ on $\mathcal{X}(\C)$ restricts to the fiber $X_t(\C)$ as $\mathrm{vol}(L_t)\cdot \mu_{f_t}$, where $\mu_{f_t}$ is the unique maximal entropy probability measure of $f_t$.

To a section $a:S\to\mathcal{X}$ of $\pi$ which is regular on the quasi-projective variety $S$, as introduced in \cite{GV_Northcott}, we associate a \emph{bifurcation current} defined as
\[T_{f,a}:=(\pi)_*\left(\widehat{T}_{f}\wedge[a(S)]\right).\]
Compare with~\cite{favredujardin} for the case when $\mathcal{X}$ has relative dimension $1$.

\subsection{Step 1 of the proof of Theorem~\ref{tm:principal}: reduction and definition of $(B_n,\bar{L}_n,\psi_n)$}

According to Theorem~\ref{tm:equidistrib}, all there is to do is to prove that $h_{f,\mathfrak{a}}$ is a good height and that, for any archimedean place $v\in M_\mathbb{K}$, the induced measure on $S_v^\mathrm{an}$ is $\mathrm{vol}_f(\mathfrak{a})^{-1}\mu_{f,\mathfrak{a},v}$.

\medskip
 
Recall that we are given $q\geq1$ sections $a_1,\ldots a_q:S\to\mathcal{X}$. We first justify that we can reduce to the case when $q=1$. Indeed, if $\pi_{[q]}:\mathcal{X}^{[q]}\to S$ is the $q$-fibered product of $\pi:\mathcal{X}\to S$, let $f^{[q]}:\mathcal{X}^{[q]}\to\mathcal{X}^{[q]}$ be defined by 
\[f^{[q]}(x)=(f_t(x_1),\ldots,f_t(x_q)), \quad x=(x_1,\ldots,x_q)\in\pi_{[q]}^{-1}\{t\}.\]
Then $(\mathcal{X}^{[q]},f^{[q]},\mathcal{L}^{[q]})$ is a family of polarized endomorphisms parametrized by $S$. Moreover, an easy computation gives, for all $x=(x_1,\ldots,x_q)\in\mathcal{X}^{[q]}$ with $\pi_{[q]}(x)=t$,
\begin{align}
\widehat{h}_{f^{[q]}_t}(x_1,\ldots,x_q)=\sum_{j=1}^q\widehat{h}_{f_t}(x_j)\geq0 \quad \text{and} \quad \widehat{h}_{f^{[q]}_t}\left( f_t^{[q]}(x)\right)=d\cdot \widehat{h}_{f^{[q]}_t}(x).\label{eq:fiberproduct}
\end{align}
Let $\mathfrak{a}:S\to\mathcal{X}^{[q]}$ be the section of $\pi_{[q]}$ induced by $a_1,\ldots,a_q$. As before, we define the bifurcation current of $\mathfrak{a}$ as
\[T_{f,\mathfrak{a}}:=(\pi_{[q]})_*\left(\widehat{T}_{f^{[q]}}\wedge[\mathfrak{a}(S)]\right).\]
An easy computation gives
\begin{align}
T_{f,\mathfrak{a}}=T_{f,a_1}+\cdots+T_{f,a_q}.\label{reduc}
\end{align}
Combining equations \eqref{reduc} and \eqref{eq:fiberproduct}, we deduce that $\mu_{f,\mathfrak{a},v}=T_{f,\mathfrak{a}}^p$ and that $h_{f,\mathfrak{a}}=\sum_{j=1}^qh_{f,a_j}$. In particular, up to replacing $f$ by $f^{[q]}$ and $a_1,\ldots,a_q$ by $\mathfrak{a}$, we can assume $q=1$.

~

Let $\iota:\mathcal{X}\hookrightarrow \mathbb{P}^{M_1}\times S\hookrightarrow \mathbb{P}^{M_1}\times \mathbb{P}^{M_2}\hookrightarrow \mathbb{P}^{N}$, where the last embedding is the Segre embedding.  Let also $\bar{S}$ be the closure of $S$ in $\mathbb{P}^{M_2}$ induced by this embedding. Up to taking a large multiple of $\mathcal{L}$, we can assume the first embedding $\iota_1$ is induced by $\mathcal{L}$ and
\[h_\iota=h_{\mathcal{X},\mathcal{L}}+h_S \ \text{ on} \ \mathcal{X}(\bar{\mathbb{Q}}),\]
where $h_S$ is the ample height on $\bar{S}$ induced by the second embedding.
For any $n\geq1$, define a section $\mathfrak{a}_n:S\to\mathcal{X}$ by
\[\mathfrak{a}_n(t):=f_t^{\circ n}(a(t)), \quad t\in S.\]
As $\mathfrak{a}_n$ is a section of $\pi:\mathcal{X}\to S$, it is injective. In particular, $\iota\circ \mathfrak{a}_n:S\to\mathbb{P}^N$ is finite.

We can assume $\mathfrak{a}$ extends as a morphism from $B_0:=\bar{S}$ to $\bar{\mathcal{X}}$, which is then generically finite. Set then $\psi_0:=\mathrm{id}$. We can define $B_n$ and $\psi_n$ inductively as follows: if $\iota\circ\mathfrak{a}_{n+1}$ extends as a morphism $B_n\to\mathbb{P}^N$, let $B_{n+1}:=B_n$ and $\psi_{n+1}=\psi_n$. Otherwise, let $p_{n+1}:B_{n+1}\to B_n$ be a finite sequence of blowups such that $\iota\circ \mathfrak{a}_{n+1}\circ \psi_n\circ p_{n+1}$ extends as a morphism $B_{n+1}\to\mathbb{P}^N$. Let then $\psi_{n+1}:=\psi_n\circ p_{n+1}$. The morphism $\iota\circ \mathfrak{a}_{n+1}\circ\psi_{n+1}:B_{n+1}\to\mathbb{P}^N$ is then generically finite by construction. We equip $\mathcal{O}_{\mathbb{P}^N}(1)$ with its standard metrization and denote it by $\bar{\mathcal{O}}(1)$. Define
\[\bar{L}_n:= \frac{1}{d^n}(\iota\circ \mathfrak{a}_n\circ\psi_n)^*\bar{\mathcal{O}}(1).\]
As $\bar{\mathcal{O}}(1)$ is ample and  $\iota\circ \mathfrak{a}_n\circ\psi_n$ is generically finite, $\bar{L}_n$ is big and nef. Moreover, by construction, the variety $B_n$ is defined over $\mathbb{K}$.

\subsection{Step 2 of the proof of Theorem~\ref{tm:principal}: convergence of the measures}\label{sec:CL-D}

Fix a place $v\in M_\mathbb{K}$. We now prove that the sequence $(c_1(\bar{L}_n)_v^p)_n$ of positive measures on $S_v^\mathrm{an}$ converges weakly to a positive measure $\mu_v$.

\medskip

Assume first $v$ is archimedean. We follow classical arguments we summarize here: we can write $d^{-1}f^*(\iota^*\omega_{\mathbb{P}^N})-\iota^*\omega_{\mathbb{P}^N}=dd^cu$,
where $u\in\mathscr{C}^\infty(\mathcal{X}_v^\mathrm{an})$. An easy induction gives, for any integer $n\geq1$,
\[\widehat{T}_{f}-\frac{1}{d^n}\left(f^{\circ n}\right)^*(\iota^*\omega_{\mathbb{P}^N})=\frac{1}{d^n}dd^cg\circ f^{\circ n} \quad \text{on} \ \ \mathcal{X}_v^\mathrm{an},\]
where $g=\sum_{j\geq 0} d^{-j}\cdot u\circ f^{\circ j}\in\mathscr{C}^0\left(\mathcal{X}_v^\mathrm{an}\right)$. Pulling back by $\mathfrak{a}:S\to\mathcal{X}$, we find $T_{f,\mathfrak{a}}-(\psi_n)_*c_1(\bar{L}_n)_v=d^{-n}\cdot dd^cg\circ f^{\circ n}$ on $S_v^\mathrm{an}$. In particular, $(\psi_n)_*c_1(\bar{L}_n)_v$ converges to $T_{f,\mathfrak{a}}$ with a local uniform convergence of potentials. Thus $((\psi_n)_*c_1(\bar{L}_n)_v^p)_n$ converges weakly on $S_v^\mathrm{an}$ to $\mu_{f,\mathfrak{a},v}$.

\medskip

\par When $v$ is non-archimedean, we rely on the work \cite{CL-D-currents} of Chambert-Loir and Ducros, and we employ freely their vocabulary. We follow the strategy used in the archimedean case, as presented in \cite{Boucksom-Eriksson}: let $\|\cdot\|_v$ be the naive (Fubini-Study) metric on $\mathcal{O}_{\mathbb{P}^N}(1)$ at the place $v$ so that the metric on $L_n$ is $d^{-n}(\iota\circ\mathfrak{a}_n\circ\psi_n)^*\|\cdot\|_v$. On the space of metrics on $\mathcal{L}$, the map $(d^{-1}f^*-\mathrm{id})$ is $\frac{C_K}{d}$-contracting over any compact subset $K$ of $\mathcal{X}^\mathrm{an}_v$. In particular, we have
\[\left|\frac{1}{d^n}\left(f^{\circ n}\right)^*\left(\iota^*\|\cdot\|_v\right)-\frac{1}{d^{n+1}}\left(f^{\circ (n+1)}\right)^*\left(\iota^*\|\cdot\|_v\right)\right|\leq\frac{C_K}{d^{n+1}},\]
over any compact subset $K\Subset\mathcal{X}^\mathrm{an}_v$. In particular, the sequence 
\[\left(\frac{1}{d^n}\left(f^{\circ n}\right)^*\left(\iota^*\|\cdot\|_v\right)\right)_n\]
converges uniformly over compact subsets of $\mathcal{X}^\mathrm{an}_v$ to a continuous and semi-positive metric $\|\cdot\|_{f,v}$ on $\mathcal{L}$ which satisfies
\[\left|\frac{1}{d^n}\left(f^{\circ n}\right)^*\left(\iota^*\|\cdot\|_v\right)-\|\cdot\|_{f,v}\right|\leq\frac{C_K'}{d^{n}},\]
over any compact subset $K\Subset\mathcal{X}^\mathrm{an}_v$. To conclude, we remark that on $S_v^\mathrm{an}$, one can write
\begin{align*}
(\psi_n)_*c_1(\bar{L}_n)_v & =\mathfrak{a}_0^*\left(\frac{1}{d^n}\left(f^{\circ n}\right)^*\iota^*c_1(\bar{\mathcal{O}}_{\mathbb{P}^N}(1))_v\right)\\
& =\mathfrak{a}_0^*\left(c_1(\mathcal{L},\|\cdot\|_{f,v})\right)+dd^cu_n,
\end{align*}
and the above implies that $(u_n)$ converges uniformly on any compact subset $K\Subset S_v^{\mathrm{an}}$ to the constant function $0\in\mathscr{C}^0(S_v^\mathrm{an})$ (which is obviously locally approachable). By \cite[Corollaire (5.6.5)]{CL-D-currents}, this implies
\[(\psi_n)_*c_1(\bar{L}_n)^p\to \mu_v:=\left(\mathfrak{a}_0^*\left(c_1(\mathcal{L},\|\cdot\|_{f,v})\right)\right)^p\]
in the weak sense of measures on $S_v^\mathrm{an}$, as granted.

\subsection{Step 3 of the proof of Theorem~\ref{tm:principal}: convergence of volumes}
By the above, $\bar{L}_n$ is an ample $\mathbb{Q}$-line bundle equipped with a semi-positive adelic continuous metrization. Moreover, we have $h_{\bar{L}_n}(t)\geq0$ for all $t\in B_n(\bar{\mathbb{Q}})$, so that Corollary~\ref{lm:Zhang} implies $h_{\bar{L}_n}(B_n)\geq0$. Next, we prove that $\mathrm{vol}(L_n)\to \mathrm{vol}_f(\mathfrak{a})$ as $n\to\infty$ to conclude. To do so, we rely on \cite{GV_Northcott}. 
\begin{lemma}\label{claim:volume}
There is $C_1>0$ depending only on $(\mathcal{X},f,\mathcal{L})$, $S$, $\mathfrak{a}$ and $\iota$ such that
\[\left|\mathrm{vol}(L_n)-\mathrm{vol}_{f}(\mathfrak{a})\right|\leq \frac{C_1}{d^n}.\]
Moreover, for any ample class $H$ on $X_0$, there is a constant $C_2>0$ depending only on $(\mathcal{X},f,\mathcal{L})$, $S$, $\mathfrak{a}$, $\iota$ and $H$ such that for any $n\geq1$,
\[\left(\psi_n^*H\cdot\left(L_n\right)^{p-1}\right)\leq C_2\]
\end{lemma}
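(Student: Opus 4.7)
The plan is to combine the explicit description of $\bar{L}_n$ on $B_n$ with the intersection-theoretic estimates of \cite{GV_Northcott}, and then to deduce the second assertion from the first via a Khovanskii--Teissier type inequality.

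From the Segre structure of the embedding $\iota$ chosen at the start of the proof, one has $\iota^*\mathcal{O}(1)\cong \mathcal{L}\otimes\pi^*\mathcal{M}$ with $\mathcal{M}$ ample on $\bar S=B_0$. Using $f^*\mathcal{L}\cong\mathcal{L}^{\otimes d}$ and $\pi\circ f=\pi$, this gives $(f^n)^*\iota^*\mathcal{O}(1)\cong\mathcal{L}^{\otimes d^n}\otimes\pi^*\mathcal{M}$. Pulling back by $\mathfrak{a}$, which extends to a morphism $B_0\to\bar{\mathcal{X}}$, one obtains on $B_n$
\[L_n = \psi_n^*\bigl(\Lambda + d^{-n}\mathcal{M}\bigr) - d^{-n}E_n, \qquad \Lambda:=\mathfrak{a}^*\mathcal{L},\]
where $E_n\geq 0$ is an effective $\psi_n$-exceptional divisor coming from the resolution of the indeterminacies of the rational map $\iota\circ\mathfrak{a}_n: B_0\dashrightarrow\mathbb{P}^N$.

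For the volume estimate, I would expand $(L_n)^p$ via the binomial formula and the projection formula for the birational map $\psi_n$. The leading term is $(\Lambda+d^{-n}\mathcal{M})^p = \Lambda^p + O(d^{-n})$. The $E_n$-linear contribution vanishes, since $\psi_{n,*}E_n = 0$ for an exceptional divisor. Every remaining term carries a factor $d^{-nk}$ with $k\geq 2$: the purely $\mathcal{M}$-higher contributions are trivially $O(d^{-2n})$, while the mixed terms involving powers of $E_n$ are controlled by the key estimates of \cite{GV_Northcott} on the algebraic complexity of the blow-ups $B_n\to B_0$. Finally, $\Lambda^p$ is identified with $\mathrm{vol}_f(\mathfrak{a})$ in \cite{GV_Northcott} (and can also be read off from Subsection~\ref{sec:CL-D}: at an archimedean place, $(\psi_n)_*c_1(\bar{L}_n)_v^p \to T_{f,\mathfrak{a}}^p$ weakly, so once one rules out the escape of mass to the boundary, the total masses converge).

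For the mixed intersection bound, both $\psi_n^*H$ and $L_n$ are nef classes on the smooth projective $p$-fold $B_n$. The Khovanskii--Teissier inequality yields
\[(\psi_n^*H)\cdot(L_n)^{p-1}\leq\bigl((\psi_n^*H)^p\bigr)^{1/p}\cdot\bigl((L_n)^p\bigr)^{(p-1)/p} = (H^p)^{1/p}\cdot\mathrm{vol}(L_n)^{(p-1)/p},\]
where I used the projection formula $(\psi_n^*H)^p = H^p$. The right-hand side is bounded uniformly in $n$ by the first assertion, so the second assertion follows. The crux of the argument is therefore the first assertion, and more precisely the quantitative control of the intersection numbers involving the exceptional divisors $E_n$, whose complexity grows with $n$; this is exactly what is provided by \cite{GV_Northcott}.
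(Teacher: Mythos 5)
Your use of the Khovanskii--Teissier inequality in the second part is in the \emph{wrong direction}. For nef classes $\alpha,\beta$ on a smooth projective $p$-fold, the Khovanskii--Teissier (log-concavity) inequality gives
\[
\bigl(\alpha\cdot\beta^{p-1}\bigr)^{p}\;\geq\;\bigl(\alpha^{p}\bigr)\bigl(\beta^{p}\bigr)^{p-1},
\]
i.e.\ it bounds the mixed intersection \emph{from below} by the geometric mean of the self-intersections, not from above. There is no such upper bound in general: one can have $\alpha^{p}$ bounded, $\beta^{p}$ bounded (even tending to $0$), while $\alpha\cdot\beta^{p-1}$ diverges. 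So the second assertion of the lemma cannot be deduced from the first by this route; it needs its own estimate, and in the paper this is done by a parallel mass computation with an extra factor $\pi^{*}\omega_{H}$, again controlled by the quantitative intersection estimates of \cite{GV_Northcott}.

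On the first part, your strategy (decomposing $L_{n}=\psi_{n}^{*}(\Lambda+d^{-n}\mathcal{M})-d^{-n}E_{n}$, expanding $(L_{n})^{p}$, and killing the $E_{n}$-linear term via $\psi_{n,*}E_{n}=0$) is a genuinely different route from the paper, which instead interprets $\mathrm{vol}(L_{n})$ as the total mass of $[\mathfrak{a}(S_{v}^{\mathrm{an}})]\wedge(d^{-n}(f^{\circ n})^{*}\iota^{*}\omega_{\mathbb{P}^{N}})^{p}$ and controls it using the telescoping estimates of \cite[Proposition~13]{GV_Northcott}. Your decomposition approach is plausible but leaves the hardest point unaddressed: the higher-order terms $E_{n}^{j}\cdot\psi_{n}^{*}(\cdots)^{p-j}$ for $j\geq2$ can have arbitrary sign (exceptional divisors routinely have negative self-intersection) and their magnitude depends on the geometry of the successive blow-ups $B_{n}\to B_{0}$, which a priori grows with $n$. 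You invoke \cite{GV_Northcott} for exactly this bound, but that reference controls masses of \emph{positive} dynamical currents, not self-intersections of exceptional divisors of resolutions; it is not clear the estimates you need are actually available there in the form you want. The paper's proof sidesteps this issue entirely by never splitting off $E_{n}$: it stays with the positive current $(f^{\circ n})^{*}\iota^{*}\omega_{\mathbb{P}^{N}}$ on $\mathcal{X}_{v}^{\mathrm{an}}$, where positivity makes all the relevant masses tractable.
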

\begin{proof}
Let us prove the claim, by computing masses of currents on $\mathcal{X}_v^{\mathrm{an}}$ with respect to the K\"ahler form $\iota^*(\omega_{\mathbb{P}^N})$. Then, one can interpret $\mathrm{vol}(L_n)$ as
\begin{align*}
\mathrm{vol}(L_n) =\int_{S_v^{\mathrm{an}}}\left(\frac{1}{d^{n}}(\iota\circ \mathfrak{a}_n)^*\omega_{\mathbb{P}^N}\right)^p& =\frac{1}{d^{np}}\int_{(\mathcal{X})_v^{\mathrm{an}}}[\mathfrak{a}(S_v^{\mathrm{an}})]\wedge\left( \left(f^{\circ n}\right)^*\iota^*(\omega_{\mathbb{P}^N})\right)^{p}.
\end{align*}
First, point 1 of Proposition~13 of \cite{GV_Northcott} implies $\|\widehat{T}_{f}\|$ is finite and
\[\left|d^n\|\widehat{T}_{f}\|-\|(f^{\circ n})^*(\iota^*\omega_{\mathbb{P}^N})\|\right|\leq C_0\]
for some constant $C_0$ depending only on $(\mathcal{X},f,\mathcal{L})$.
Then, the second point of Proposition 13 of \cite{GV_Northcott} rereads as
\[\left|\mathrm{vol}(L_n)-\mathrm{vol}_{f}(\mathfrak{a})\right|\leq C_0'\sum_{j=0}^{p-1}d^{n(j-p)}\left\|\widehat{T}_{f}^{\wedge j}\wedge[\mathfrak{a}(S_v^{\mathrm{an}})]\wedge\left( (f^{\circ n})^*\iota^*(\omega_{\mathbb{P}^N})\right)^{\wedge (p-j-1)}\right\|\]
for some constant $C_0'>0$ depending only on $(\mathcal{X},f,\mathcal{L})$. This gives
\[\left|\mathrm{vol}(L_n)-\mathrm{vol}_{f}(\mathfrak{a})\right|\leq C_0'\sum_{j=0}^{p-1}d^{n(j-p)}\|\widehat{T}_{f}\|^{j}\cdot \left(d^n\|\widehat{T}_{f}\|+C_0\right)^{p-j-1}\leq \frac{C_1}{d^{n}},\]
by Bezout Theorem.

\medskip

Let now $\omega_H$ be a smooth $(1,1)$-form on $S_v^\mathrm{an}$ cohomologous to $H$. Then
\[\left(H_n\cdot (L_n)^{p-1}\right)=\frac{1}{d^{n(p-1)}}\int_{\mathcal{X}_v^{\mathrm{an}}}[\mathfrak{a}(S_v^{\mathrm{an}})]\wedge\left( \left(f^{\circ n}\right)^*\iota^*(\omega_{\mathbb{P}^N})\right)^{p-1}\wedge(\pi^*\omega_H)\]
and as above, for any $n\geq1$, we have
\begin{align*}
\left(H_n\cdot (L_n)^{p-1}\right)\leq & \ C_0'\sum_{j=0}^{p-2}d^{n(j-p+1)}\left\|\widehat{T}_{f}^{j}\wedge[\mathfrak{a}(S_v^{\mathrm{an}})]\wedge\left( (f^{\circ n})^*\iota^*(\omega_{\mathbb{P}^N})\right)^{p-j-2}\wedge(\pi^*\omega_H)\right\|\\
& \quad + \left\|\widehat{T}_f^{p-1}\wedge[\mathfrak{a}(S_v^{\mathrm{an}})]\wedge(\pi^*\omega_H)\right\|,
\end{align*}
so that we deduce as in the proof of the first point of the Lemma that
\begin{align*}
\left(H_n\cdot (L_n)^{p-1}\right)\leq & \ C',
\end{align*}
for some constant $C'>0$ depending only on $(\mathcal{X},f,\mathcal{L})$, $\mathfrak{a}$, $\iota$ and $H$.
\end{proof}
\subsection{Step 4 of the proof of Theorem~\ref{tm:principal}: an upper bound on intersection numbers}

Let $\bar{M}_0$ be any ample adelic semi-positive line bundle on $B_0$. We prove here the next lemma.

\begin{lemma}
There is a constant $C\geq0$ such that for any $n\geq0$ and any $2\leq j\leq p+1$,
\[\left(\psi_n^*(\bar{M}_0)\right)^j\cdot\left(\bar{L}_n\right)^{p+1-j}\leq C.\]
\end{lemma}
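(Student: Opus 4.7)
The plan is a downward induction on $j$ from $j=p+1$ to $j=2$, removing one factor of $\psi_n^*\bar M_0$ at each step via the fundamental arithmetic intersection identity with an auxiliary section. The base case $j=p+1$ is handled by the arithmetic projection formula for the birational morphism $\psi_n\colon B_n\to B_0$, which gives $(\psi_n^*\bar M_0)^{p+1}=(\bar M_0)^{p+1}$, independent of $n$. For $2\leq j\leq p$, I would choose an integer $k\geq 1$ and a section $s\in H^0(B_0,M_0^{\otimes k})$ whose divisor $D_0:=\mathrm{div}(s)$ is generic enough to meet $S$ and to be transverse to each $\psi_n$-exceptional locus (countably many conditions). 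Setting $s_n:=\psi_n^*s$ and $D_n:=\psi_n^*D_0$, the formula recalled in Section~\ref{sec:ineg-height} becomes
\begin{align*}
k\cdot I_{n,j}=\bigl((\psi_n^*\bar M_0)|_{D_n}\bigr)^{j-1}\cdot\bigl(\bar L_n|_{D_n}\bigr)^{p+1-j}+\sum_{v\in M_\mathbb{K}}\int_{B_{n,v}^\mathrm{an}}\log\|s_n\|_v^{-1}\,c_1(\psi_n^*\bar M_0)_v^{j-1}\wedge c_1(\bar L_n)_v^{p+1-j}.
\end{align*}

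The integral sum is bounded independently of $n$: the adelic condition on $\bar M_0$ forces $\|\log\|s\|_v^{-1}\|_\infty$ to vanish for all but finitely many $v$ and to be uniformly bounded at the rest, and pulling back by $\psi_n$ preserves $L^\infty$-norms. The total mass of the integrand equals the geometric intersection $(\psi_n^*M_0)^{j-1}\cdot L_n^{p+1-j}$, which I would bound exactly as in the proof of Lemma~\ref{claim:volume}: using $L_n=d^{-n}(\iota\circ\mathfrak{a}_n\circ\psi_n)^*\mathcal{O}(1)$, the identity $\pi\circ\mathfrak{a}_n=\mathrm{id}_S$, pushforward--pullback by $f^{\circ n}$, and $(f^{\circ n})^*\pi^*=\pi^*$, the intersection rewrites as
\[\int_{\mathcal{X}_v^\mathrm{an}}[a(S_v^\mathrm{an})]\wedge\bigl(d^{-n}(f^{\circ n})^*\iota^*\omega_{\mathbb{P}^N}\bigr)^{p+1-j}\wedge\pi^*\omega_{M_0}^{j-1},\]
whose uniform boundedness follows from the mass estimates of \cite[Proposition~13]{GV_Northcott}.

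The ``divisor term'' is an arithmetic intersection of the same shape on $D_n$, which has dimension $p-1$, with $\psi_n|_{D_n}\colon D_n\to D_0$ again birational and $\bar L_n|_{D_n}$ retaining the same dynamical pullback structure. One iterates the argument along a generic flag of $\bar M_0$-hyperplane sections (strengthening the inductive hypothesis to cover all such iterated generic subvarieties), so that after $j$ reductions one is left with a purely $\bar L_n$-factor top arithmetic intersection $(\bar L_n|_{W_n'})^{p+1-j}$ on $W_n'\subset B_n$ of dimension $p-j$. I expect the main obstacle to be controlling this final term: I would apply Zhang's inequality (Lemma~\ref{ineg-Zhang-below}) to reduce it to bounding $e(\bar L_n|_{W_n'})$, then combine the Call--Silverman comparison $h_{\bar L_n}=\widehat h_{f_t}(a(t))+O(d^{-n}(1+h_{\bar S}))$ with the Zariski-dense set of low-height parameters supplied by assumption~(2) of Theorem~\ref{tm:principal} to obtain $e(\bar L_n|_{W_n'})=O(1)$, which together with the boundedness of $\mathrm{vol}(L_n|_{W_n'})$ (proved as in Lemma~\ref{claim:volume}) gives the desired uniform bound.
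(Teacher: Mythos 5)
Your proposal takes a genuinely different route from the paper's, and it has a genuine gap at the step you yourself flag as the main obstacle.

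The paper's proof is purely algebraic and bypasses slicing entirely. The key observation is the exact identity $\bar{L}_n=\psi_n^*\left(\bar{L}_0+\alpha(n)\,\bar{N}_0\right)$ with $\alpha(n)=\frac{1-d^{-n}}{d-1}\leq 1$, where $\bar{N}_0=\mathfrak{a}_0^*\bar{E}$ and $\bar{E}=f^*\iota^*\bar{\mathcal{O}}(1)-d\,\iota^*\bar{\mathcal{O}}(1)$ is a fixed adelic metrized line bundle on $B_0$, pulled back from the base. Once this is established (by telescoping the defining recursion $\bar{L}_{n+1}-p_{n+1}^*\bar{L}_n=d^{-(n+1)}\psi_{n+1}^*\bar{N}_0$), the projection formula converts the intersection number $\left(\psi_n^*\bar{M}_0\right)^j\cdot\left(\bar{L}_n\right)^{p+1-j}$ into $\left(\bar{M}_0\right)^j\cdot\left(\bar{L}_0+\alpha(n)\bar{N}_0\right)^{p+1-j}$, a polynomial in the bounded quantity $\alpha(n)$ with $n$-independent coefficients. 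There is no need for auxiliary sections, generic flags, Zhang's inequality, or Call--Silverman.

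The gap in your approach occurs precisely where you expect trouble: the final term $\left(\bar{L}_n|_{W_n'}\right)^{p+1-j}$. You propose to bound $e(\bar{L}_n|_{W_n'})$ using the Zariski-dense set of $h_{f,\mathfrak{a}}$-small parameters from assumption~(2) of Theorem~\ref{tm:principal}. But those small points form a countable subset of $S(\bar{\mathbb{Q}})$ that is Zariski dense \emph{in $S$}; a generic subvariety $W_n'$ of codimension $j\geq 2$ will simply miss (essentially all of) them. Since the essential minimum $e(\bar{L}_n|_{W_n'})=\sup_H\inf_{x\in W_n'\setminus H}h_{\bar{L}_n}(x)$ requires a supply of bounded-height points \emph{on $W_n'$} outside every hypersurface of $W_n'$, and no such supply is available, the claimed bound $e(\bar{L}_n|_{W_n'})=O(1)$ does not follow. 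Nor does the Call--Silverman comparison help by itself, since $\widehat{h}_{f_t}(a(t))$ is unbounded above on $S$. Two secondary issues are worth noting as well: the "countably many genericity conditions" must be satisfied over the countable field $\bar{\mathbb{Q}}$, which requires care; and the assertion that $\|\log\|s\|_v^{-1}\|_\infty$ is finite is false, since $\log\|s\|_v^{-1}\to+\infty$ along $\mathrm{div}(s)$ (the integral is finite, but for a different reason). Even if the secondary issues were fixed, the final-term estimate as proposed would not close.
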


\begin{proof}
 Up to changing the initial projective model $B_0$ of $S$, we can assume $B_0\setminus S$ is the support of an effective divisor $D$ of $B_0$ and that $\bar{E}:=f^*(\iota^*\bar{\mathcal{O}}(1))-d\iota^*\bar{\mathcal{O}}(1)=\pi^*(\bar{E}_0)$, for some adelic metrize line bundle $\bar{E}_0$ on $B_0$ which can be represented by a divisor supported on $\mathrm{supp}(D)$, where $\pi:\bar{\mathcal{X}}\to B_0$ is the extension of $\pi:\mathcal{X}\to S$. We may also assume $\mathfrak{a}_0$ extends as a morphism from $B_0$ to $\bar{\mathcal{X}}$. Let $\bar{\mathcal{X}}_{n,1}:=\bar{\mathcal{X}}\times_{B_0}B_n$ and there is a normal projective variety $\bar{\mathcal{X}}_n$, a birational morphism $\bar{\mathcal{X}}_n\to\bar{\mathcal{X}}_{n,1}$ and a projective morphism $\pi_n:\bar{\mathcal{X}}_n\to B_n$ which is flat over $\psi_n^{-1}(S)$ such that $f^{\circ j}$ extends as a morphism $F_{j,n}:\bar{\mathcal{X}}_n\to\bar{\mathcal{X}}$ for all $j\leq n$. Let also $\mathfrak{a}_{0,(n)}$ be the section of $\pi_n$ induced by $\mathfrak{a}_0$. Up to blowing up $B_n$, we can assume $\mathfrak{a}_{0,(n)}$ extends as a morphism from $B_n$ to $\bar{\mathcal{X}}_n$. Let $\Psi_n:\bar{\mathcal{X}}_n\to\bar{\mathcal{X}}$ be the birational morphism induced by this construction. Note that $\Psi_n$ is an isomorphism from $\pi_n^{-1}(\psi_n^{-1}(S))$ to $\pi^{-1}(S)$ and that $\pi\circ\Psi_n=\psi_n\circ \pi_n$ and $\Psi_n\circ \mathfrak{a}_{0,(n)}=\mathfrak{a}_0\circ\psi_n$.

Let $\bar{N}_0:=\mathfrak{a}_{0}^*\bar{E}$. By construction of $\bar{L}_n$ and $\bar{L}_{n+1}$ we can write
\begin{align*}
\bar{L}_{n+1}-p_{n+1}^*\bar{L}_n= & \, \frac{1}{d^{n+1}}\mathfrak{a}_{0,(n+1)}^*\left(F_{n,n+1}\right)^*\left(f^*\iota^*\bar{\mathcal{O}}(1)\right)\\
& \  -\frac{1}{d^n}\cdot \mathfrak{a}_{0,(n+1)}^*\left(F_{n,n+1}\right)^*\left(\iota^*\bar{\mathcal{O}}(1)\right)\\
=& \, \frac{1}{d^{n+1}}\mathfrak{a}_{0,(n+1)}^*\left(\Psi_n\right)^*\left(f^*\iota^*\bar{\mathcal{O}}(1)-d\cdot \iota^*\bar{\mathcal{O}}(1)\right)\\
& =\frac{1}{d^{n+1}}\mathfrak{a}_{0,(n+1)}^*\left(\Psi_n\right)^*(\bar{E})\\
= & \, \frac{1}{d^{n+1}}\psi_{n+1}^*\left(\bar{N}_0\right).
\end{align*}
If we let $\alpha(n):=\frac{1-d^{-n}}{d-1}$, using that $p_{n+1}\circ \psi_n=\psi_{n+1}$, an easy induction gives
\begin{align}
\bar{L}_{n}=\psi_{n}^*\left(\bar{L}_0+\alpha(n)\cdot \bar{N}_0\right).\label{eq:induction-formula}
\end{align}
In particular, if $\bar{\mathscr{N}}_0$ is a metrized line bundle on $\mathscr{B}_0$ which restricts to $\bar{N}_0$ on the special fiber of $\mathscr{B}_0\to\mathrm{Spec}(\mathscr{O}_\mathbb{K})$ andf if we let $I_n^j:=\left(\psi_n^*(\bar{M}_0)\right)^j\cdot\left(\bar{L}_n\right)^{p+1-j}$, using first the equation \eqref{eq:induction-formula} and then the projection formula, we find
\begin{align*}
I_{n}^j= &\left(\psi_{n}^*(\bar{M}_0)\right)^j\cdot\left(\psi_n^*\left(\bar{L}_0+\alpha(n)\cdot \bar{\mathscr{N}}_0\right)\right)^{p+1-j}\\
= & \left(\bar{M}_0\right)^j\cdot\left(\bar{L}_0+\alpha(n)\cdot \bar{\mathscr{N}}_0\right)^{p+1-j}\\
= & \sum_{\ell=0}^{p+1-j}{ p+1-j \choose \ell}  \left(\alpha(n)\right)^{p+1-j-\ell}\left(\bar{M}_0\right)^j\cdot\left(\bar{L}_0\right)^\ell\cdot\left(\bar{\mathscr{N}}_0\right)^{p+1-j-\ell}.
\end{align*}
Take $C_1\geq0$ such that $\left(\bar{M}_0\right)^j\cdot\left(\bar{L}_0\right)^\ell\cdot\left(\bar{\mathscr{N}}_0\right)^{p+1-j-\ell}\leq C_1$ for any $2\leq j\leq p+1$ and any $0\leq \ell\leq p+1-j$. As $\alpha(n)\leq 1$ for any $n\geq0$, the above gives
\begin{align*}
I_{n}^j\leq & \left(\bar{M}_0\right)^j\cdot\left(\bar{L}_0\right)^{p+1-j}+C_2,
\end{align*}
where $C_2$ depends only on $p$ and $C_1$. The conclusion of the lemma follows.
\end{proof}

\subsection{Step 5 of the proof of Theorem~\ref{tm:principal}: end of the proof}
All there is thus left to prove is that, for any given generic and $h$-small sequence $\{x_i\}_i$, the induced sequence $\varepsilon_n(\{x_i\}_i):=\limsup h_{\bar{L}_n}(x_i)$ converges to $0$ as $n\to\infty$. We rely on \cite{CS-height} and again on \cite{GV_Northcott} and we use Siu's classical bigness criterion as, e.g., in \cite[\S 7]{GOV2}.

The key point, in this step is the following in the spirit of \cite[Theorem~1.4]{Gao-Habegger}:
\begin{lemma}\label{lm:DGH}
If $\mathrm{vol}_f(\mathfrak{a})>0$, there is a non-empty Zariski open subset $U\subset \mathfrak{a}(S)$ and a constant $c>0$ depending only on $(\mathcal{X},f,\mathcal{L})$, $S$, $\mathfrak{a}$ and $\iota$ such that
\[h_S(\pi(x))\leq c\left(1+\widehat{h}_{f_{\pi(x)}}(x)\right), \quad x\in U(\bar{\mathbb{Q}}).\]
In particular, there is $c'>0$ depending only on $(\mathcal{X},f,\mathcal{L})$, $S$, $\mathfrak{a}$ and $\iota$ such that
\[\left|\widehat{h}_{f_{\pi(x)}}(x)-h_{\mathcal{X},\mathcal{L}}(x)\right|\leq c'\left(1+\widehat{h}_{f_{\pi(x)}}(x)\right), \quad x\in U(\bar{\mathbb{Q}}).\]
\end{lemma}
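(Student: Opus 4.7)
The strategy is to apply Siu's classical bigness criterion to the big and nef $\mathbb{Q}$-line bundles $\bar{L}_n$ on $B_n$, combined with the Call--Silverman comparison between canonical and Weil heights along the fibration. Let $H$ be an ample divisor on $B_0$ whose associated height is equivalent to $h_S$ up to a bounded error. By Lemma~\ref{claim:volume}, we have $\mathrm{vol}(L_n)\to \mathrm{vol}_f(\mathfrak{a})>0$ and $\psi_n^*H\cdot (L_n)^{p-1}\leq C_2$ uniformly in $n$. Siu's inequality then gives
\[
\mathrm{vol}(L_n - c\,\psi_n^*H) \;\geq\; (L_n)^p - p\,c\,\bigl(\psi_n^*H\bigr)\cdot (L_n)^{p-1},
\]
so choosing $c>0$ small enough that $\mathrm{vol}_f(\mathfrak{a})/2 - p\,c\,C_2 > 0$ makes $L_n - c\,\psi_n^*H$ big for every $n$ sufficiently large. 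For such $n$, a suitable multiple $N(L_n - c\,\psi_n^*H)$ admits a non-zero global section with proper zero locus $Z\subsetneq B_n$, yielding the standard height consequence
\[
h_{L_n}(t) \;\geq\; c\,h_H(\psi_n(t)) \;-\; C_1/N, \qquad t\in (B_n\setminus Z)(\bar{\mathbb{Q}}).
\]

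Next, the definition of $\bar L_n$ and the decomposition $h_\iota = h_{\mathcal{X},\mathcal{L}}+h_S$ on $\mathcal{X}(\bar{\mathbb{Q}})$ yield
\[
h_{L_n}(t) \;=\; d^{-n}\,h_{\mathcal{X},\mathcal{L}}\bigl(f_s^{\circ n}(\mathfrak{a}(s))\bigr) \;+\; d^{-n}\,h_S(s) \;+\; O(1), \qquad s:=\psi_n(t).
\]
Setting $U:=\psi_n(B_n\setminus Z)\cap S$, which is a non-empty Zariski open subset of $S$, and choosing $n$ large enough that $d^{-n}\leq c/2$, the previous two displays combine to
\[
d^{-n}\,h_{\mathcal{X},\mathcal{L}}\bigl(f_s^{\circ n}(\mathfrak{a}(s))\bigr) \;\geq\; \tfrac{c}{2}\,h_S(s) \;+\; O(1), \qquad s\in U(\bar{\mathbb{Q}}).
\]
We then invoke the uniform Call--Silverman estimate $|\widehat h_{f_s}(y)-h_{\mathcal{X},\mathcal{L}}(y)|\leq C_{\mathrm{CS}}(1+h_S(s))$ on $X_s(\bar{\mathbb{Q}})$, with $C_{\mathrm{CS}}$ independent of $s$; applied to $y=f_s^{\circ n}(\mathfrak{a}(s))$ and combined with $\widehat h_{f_s}(f_s^{\circ n}(\mathfrak{a}(s)))=d^n\,\widehat h_{f_s}(\mathfrak{a}(s))$, it yields
\[
\bigl|d^{-n}h_{\mathcal{X},\mathcal{L}}(f_s^{\circ n}(\mathfrak{a}(s))) - \widehat h_{f_s}(\mathfrak{a}(s))\bigr| \;\leq\; C_{\mathrm{CS}}(1+h_S(s))/d^n.
\]
Enlarging $n$ once more so that $C_{\mathrm{CS}}/d^n \leq c/4$ and substituting yields the first inequality of the lemma for $x=\mathfrak{a}(s)$, $s\in U(\bar{\mathbb{Q}})$, once we note $\widehat h_{f_{\pi(x)}}(x)=\widehat h_{f_s}(\mathfrak{a}(s))$.

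The second inequality of the lemma is then immediate: Call--Silverman gives $|\widehat h_{f_{\pi(x)}}(x)-h_{\mathcal{X},\mathcal{L}}(x)|\leq C_{\mathrm{CS}}(1+h_S(\pi(x)))$, and substituting the first inequality to bound $h_S(\pi(x))$ absorbs it into a term of the form $c'(1+\widehat h_{f_{\pi(x)}}(x))$. The main obstacle is maintaining uniformity of all constants in $n$: this is precisely where the hypothesis $\mathrm{vol}_f(\mathfrak{a})>0$ enters, ensuring that $L_n - c\,\psi_n^*H$ stays big for small $c$ and large $n$, and where the bound $\psi_n^*H\cdot(L_n)^{p-1}\leq C_2$ of Lemma~\ref{claim:volume} becomes indispensable in controlling the Siu error term.
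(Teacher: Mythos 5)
Your proposal is correct and follows essentially the same route as the paper's proof: apply Siu's bigness criterion on $B_n$ to the pair $(L_n, \psi_n^*H)$ using the two uniform bounds from Lemma~\ref{claim:volume}, extract a height inequality from effectivity of a big class, and then transfer via the Call--Silverman comparison $|\widehat h_{f_s}-h_{\mathcal{X},\mathcal{L}}|\leq C_{\mathrm{CS}}(1+h_S(s))$ together with $\widehat h_{f_s}\circ f_s^{\circ n}=d^n\widehat h_{f_s}$, finally fixing $n$ large so the error term $C_{\mathrm{CS}}/d^n$ is dominated. The only cosmetic differences are that the paper clears denominators (choosing a large integer $M$ and showing $ML_n-H_n$ is big via the threshold form of Siu), whereas you keep a small rational $c$ and invoke the quantitative lower bound $\mathrm{vol}(L_n-c\psi_n^*H)\geq (L_n)^p-pc(\psi_n^*H)\cdot(L_n)^{p-1}$ directly; and you expand $h_{L_n}$ explicitly via $h_\iota=h_{\mathcal{X},\mathcal{L}}+h_S$, while the paper keeps $h_{\bar L_n}$ as a black box and compares it to $h_{f,\mathfrak a}$ in one step. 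Both choices are equivalent in substance, and the uniformity of constants is handled the same way: $c$ (resp. $M$) is chosen independent of $n$, then $n$ is fixed once large enough, which is exactly why the open set $U$ depends on that choice of $n$, as the paper itself remarks after the lemma.
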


Befor proving this lemma, we can remark that the open set $U$ may be in fact $\mathfrak{a}(S)$, but that the strategy of the proof does not allow to prove it. It would be interesting to clarify the situation here, but it is not needed in the present proof.

\begin{proof}
Define a line bundle $H_n$ on $B_n$ by letting
\[H_n:=\psi_n^*\left(H\right),\]
where $H$ is the ample line bundle on $X_0$ inducing $h_S$. As $\psi_n$ is a birational morphism, $H_n$ is big and nef. By construction, $L_n$ is also big and nef. Fix $n_0\geq1$ large enough to that Lemma~\ref{claim:volume} gives
\[\mathrm{vol}(L_n)\geq \frac{1}{2}\mathrm{vol}_f(\mathfrak{a}), \ \text{ for all} \ n\geq n_0.\]
 Fix now an integer $M\geq1$ such that $M\mathrm{vol}_f(\mathfrak{a})>2pC_2$. By multilinearity of the intersection number, we find
\[\mathrm{vol}(ML_n)=M^p\mathrm{vol}(L_n)\geq\frac{M^p}{2}\mathrm{vol}_f(\mathfrak{a})>pM^{p-1}C_2\geq p\left(H_n\cdot (ML_n)^{p-1}\right),\]
so that by the classical bigness criterion of Siu~\cite[Theorem 2.2.15]{Laz}, $ML_n-H_n$ is big. In particular, there exists an integer $\ell\geq1$ such that $\ell(ML_n-H_n)$ is effective. According to \cite[Theorem B.3.2(e)]{Silvermandiophantine}, this implies there exists a Zariski open subset $U_n\subset B_n$ such that
\[h_{\ell (ML_n-H_n)}\geq O(1) \quad \text{on} \ U_n(\bar{\mathbb{Q}}).\]
Also, by functorial properties of Weil heights, see~e.g.~\cite[Theorem B.3.2(b-c)]{Silvermandiophantine}, 
\begin{align*}
h_{\ell (ML_n-H_n)} & =h_{\ell ML_n}-h_{\ell H_n}+O(1)=\ell\left(Mh_{\bar{L}_n}-h_{H_n}\right)+O(1)\\
& =\ell Mh_{\bar{L}_n}-\ell h_S\circ\psi_n+O(1),
\end{align*}
so that we have proved that
\begin{align*}
 Mh_{\bar{L}_n}\geq h_S\circ\psi_n+O(1), \quad \text{on} \ U_n(\bar{\mathbb{Q}}).
\end{align*}
Up to removing a Zariski closed subset of $U_n$, we can assume $U_n\subset \psi_n^{-1}(S)$. Since $\psi_n$ is an isomorphism from $U_n$ to $S_n:=\psi_n(U_n)\subset S$, this gives
\begin{align*}
h_S(t)\leq Mh_{\bar{L}_n}(\psi_n^{-1}(t))+N, \ t\in S_n(\bar{\mathbb{Q}}),
\end{align*}
for some constant $M\geq1$ independent of $n$ and some constant $N$ which depends a priori on $n$. In particular, when $t\in S_n(\bar{\mathbb{Q}})$ and $h_S(t)\to\infty$, we find
\begin{align}
\liminf_{h_S(t)\to\infty \atop t\in S_n(\bar{\mathbb{Q}})}\frac{h_{\bar{L}_n}(\psi_n^{-1}(t))}{h_S(t)}\geq\frac{1}{M}>0.\label{ineg-Siu-classical}
\end{align}
 According to~\cite[Theorem~3.1]{CS-height}, there is a constant $C_3\geq1$ depending only on $(\mathcal{X},f,\mathcal{L})$, $S$, $q$ and $\iota$ such that for all $t\in S(\bar{\mathbb{Q}})$ and all $x\in\mathcal{X}(\bar{\mathbb{Q}})$ with $\pi(x)=t$,
\begin{align}
\left|\widehat{h}_{f_t}(x)-h_{\iota^*\bar{\mathcal{O}}(1)}(x)\right|\leq C_3(h_S(t)+1).\label{CS-v0}
\end{align}
Evaluating the above for $x=\mathfrak{a}_n(t)$, we find
\[\left|\widehat{h}_{f_t}(\mathfrak{a}_n(t))-h_{\iota^*\bar{\mathcal{O}}(1)}(\mathfrak{a}_n(t))\right|\leq C_3(h_S(t)+1),\]
which, by invariance of $\widehat{h}_{f_t}$ and by definition of $\bar{L}_n$, gives
\begin{align}
\left|h_{f,\mathfrak{a}}(t)-h_{\bar{L}_n}(\psi_n^{-1}(t))\right|\leq \frac{C_3}{d^n}(h_S(t)+1), \quad t\in S(\bar{\mathbb{Q}}).\label{ineg:CS-almost-over}
\end{align}
We now fix $n_1\geq n_0$ such that $2MC_3\leq d^n$ for all $n\geq n_1$. Pick an integer $n\geq n_1$. Using \eqref{ineg-Siu-classical} and \eqref{ineg:CS-almost-over}, we deduce that
\begin{align*}
\liminf_{h_S(t)\to\infty \atop t\in S_n(\bar{\mathbb{Q}})}\frac{h_{f,\mathfrak{a}}(t)}{h_S(t)}\geq\liminf_{h_S(t)\to\infty \atop t\in S_n(\bar{\mathbb{Q}})}\frac{h_{\bar{L}_n}(\psi_n^{-1}(t))}{h_S(t)}-\frac{C_3}{d^n}\geq \frac{1}{M}-\frac{C_3}{d^n}\geq\frac{1}{2M}>0.
\end{align*}
This concludes the first assertion the proof of Lemma~\ref{lm:DGH}. The second follows using again \eqref{CS-v0}.
\end{proof}

To conclude the proof of Theorem~\ref{tm:equidistrib}, we now rewrite~\eqref{ineg:CS-almost-over} as
\[\left|h_{f,\mathfrak{a}}(t)-h_{\bar{L}_n}(\psi_n^{-1}(t))\right|\leq \frac{C_4}{d^n}(1+h_{f,\mathfrak{a}}(t)), \quad t\in S_n(\bar{\mathbb{Q}}),\]
where $C_4>0$ is a constant independent of $n$ and $t$. In particular, if $(F_i)_i$ is a generic and $h_{f,\mathfrak{a}}$-small sequence of finite Galois-invariant subsets of $S(\bar{\mathbb{Q}})$, then $F_i\subset S_n(\bar{\mathbb{Q}})$ for $i$ large enough and, using that $h_{\bar{L}_n}(B_n)\geq0$, we deduce that 
\[\limsup_{i\to\infty}h_{\bar{L}_n}(\psi_n^{-1}(F_i))-h_{\bar{L}_n}(B_n)\leq \limsup_{i\to\infty}h_{\bar{L}_n}(\psi_n^{-1}(F_i))\leq \frac{C_4}{d^n},\]
as required. We have proved that $h_{f,\mathfrak{a}}$ is a good height on $S$ with associated global measure $\{\mu_{f,a,v}\}_{v\in \mathbb{K}}$ and Theorem~\ref{tm:principal} follows from Theorem~\ref{tm:equidistrib}.

\section{Applications and sharpness of the assumptions}\label{sec:application}
We want finally to explore some specific cases. First, we consider the case of one parameter families of rational maps of $\mathbb{P}^1$. In a second time, we focus on the critical height on the moduli space of rational maps.
\subsection{One-dimensional families of rational maps}
In the case when $f:\mathbb{P}^1\times S\to\mathbb{P}^1\times S$ is a family of rational maps of $\mathbb{P}^1$ parametrized by a smooth quasi-projective curve, Theorem~\ref{tm:principal} reduces to the following:

\begin{theorem}\label{tm:curves}
Let $f:\mathbb{P}^1\times S\to\mathbb{P}^1\times S$ be a family of degree $d\geq1$ rational maps parametrized by a smooth quasi-projective curve $S$ and let $a:S\to\mathbb{P}^1$ be a rational function, all defined over a number field $\mathbb{K}$. Assume $\widehat{h}_{f_\eta}(a_\eta)>0$, where $\eta$ is the generic point of $S$. Then the set $\mathrm{Preper}(f,a):=\{t\in S(\bar{\mathbb{Q}})\, : \ \widehat{h}_{f_t}(a(t))=0\}$ is infinite.

Moreover, for any $v\in M_\mathbb{K}$ and for any non-repeating sequence $F_n\subset S(\bar{\mathbb{Q}})$ of Galois-invariant finite sets such that
\begin{align*}
\frac{1}{\# F_n}\sum_{t\in F_n}\widehat{h}_{f_t}(a(t))\to0, \quad \text{as} \ n\to\infty,
\end{align*}
the sequence $\mu_{F_n,v}$ of probability measures on $S_v^\mathrm{an}$ equidistributed on $F_n$ converges weakly to $\frac{1}{\widehat{h}_{f_\eta}(a_\eta)}\mu_{f,a,v}$ as $n\to\infty$.
\end{theorem}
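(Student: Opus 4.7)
The plan is to deduce Theorem \ref{tm:curves} from Theorem \ref{tm:principal} applied to the one-section family $(\mathbb{P}^1\times S, f, \mathcal{O}(1), a)$ with $\dim S = 1$. Both the infinitude of $\mathrm{Preper}(f,a)$ and the equidistribution will fall out once the two hypotheses of Theorem \ref{tm:principal} are verified.

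First I would verify hypothesis (i). Fix any archimedean $v \in M_\mathbb{K}$. Since $p = \dim S = 1$, we have $\mu_{f,a,v} = T_{f,a}$. From the construction of $(B_n, \bar{L}_n, \psi_n)$ in Section \ref{sec:distrib-para}, the total mass of $(\psi_n)_*c_1(\bar{L}_n)_v$ on $S_v^\mathrm{an}$ equals $\deg(L_n)$, which by Lemma \ref{claim:volume} converges to $\mathrm{vol}_f(a)$. On the other hand, the Call--Silverman limit $\widehat{h}_{f_t}(a(t)) = \lim_n d^{-n} h_{\mathcal{X},\mathcal{L}}(f_t^n(a(t)))$ applied at the generic point identifies this limit with $\widehat{h}_{f_\eta}(a_\eta)$, positive by assumption. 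Hence $T_{f,a}(S_v^\mathrm{an}) = \mathrm{vol}_f(a) = \widehat{h}_{f_\eta}(a_\eta) > 0$.

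Next I would prove the first conclusion (infinitude of $\mathrm{Preper}(f,a)$), which simultaneously delivers hypothesis (ii) of Theorem \ref{tm:principal}. Since $T_{f,a}$ has positive mass, its support in $S_v^\mathrm{an}$ is non-empty, and the continuity of its local potentials forces this support to have no isolated points. By classical complex dynamics, the family $\{t \mapsto f_t^n(a(t))\}_{n \geq 0}$ of holomorphic maps from $S_v^\mathrm{an}$ to $\mathbb{P}^{1,\mathrm{an}}_v$ fails to be normal precisely on $\mathrm{supp}(T_{f,a})$, and Montel's theorem produces, in every neighbourhood of such a point, a parameter $t_0$ for which $a(t_0)$ is preperiodic under $f_{t_0}$. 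Each such $t_0$ satisfies a polynomial equation $f_t^{n+m}(a(t)) = f_t^m(a(t))$ defined over $\mathbb{K}$, hence lies in $S(\bar{\mathbb{Q}})$. This produces infinitely many elements of $\mathrm{Preper}(f,a)$. Taking Galois orbits of any countable subset yields a sequence $(F_i)_i$ of Galois-invariant finite subsets of $S(\bar{\mathbb{Q}})$ with $h_{f,a}(F_i) = 0$; the genericity condition is automatic on a curve, since hypersurfaces of $S$ are finite.

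With both hypotheses in hand, Theorem \ref{tm:principal} applies to the stated non-repeating $h_{f,a}$-small sequence $(F_n)$: on the curve $S$ the condition $\#(F_n \cap H) = o(\#F_n)$ is vacuous once $\#F_n \to \infty$, which follows from the $h_{f,a}$-smallness combined with Lemma \ref{lm:DGH} (comparison with the ample height $h_S$ on a Zariski open set) and Northcott's theorem on $\bar{S}$. The resulting convergence $\mu_{F_n,v} \to \mathrm{vol}_f(a)^{-1}\mu_{f,a,v} = \widehat{h}_{f_\eta}(a_\eta)^{-1}\mu_{f,a,v}$ is precisely the stated equidistribution.

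The hard part is producing the infinite algebraic preperiodic set. The passage from positivity of the bifurcation mass to \emph{infinitude} of algebraic preperiodic parameters relies on a Montel-type input from complex dynamics that is not developed in the present paper; alternatively one can argue directly by showing each polynomial $f_t^{n+m}(a(t)) - f_t^m(a(t))$ is non-trivial on $S$ --- otherwise $a$ would be generically preperiodic, contradicting $\widehat{h}_{f_\eta}(a_\eta) > 0$ --- and combining their zero loci as $(n,m)$ varies, while keeping track of possible accumulation at the boundary of $S$ in its projective compactification.
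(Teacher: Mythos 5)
Your proposal is correct and follows essentially the same route as the paper: identifying $\mathrm{vol}_f(a)$ with $\widehat{h}_{f_\eta}(a_\eta)$ via \cite[Theorem~B]{GV_Northcott} (which you resketch rather than cite, and your mass-of-$(\psi_n)_*c_1(\bar L_n)_v$ step is a little loose since mass could a priori escape to the boundary, but the cited theorem covers this), then producing the infinite algebraic preperiodic set by the Montel/non-normality argument, and concluding with Theorem~\ref{tm:principal}. The one place you are more careful than the paper is in checking that ``non-repeating'' plus $h_{f,a}$-smallness forces $\#F_n\to\infty$ via Lemma~\ref{lm:DGH} and Northcott, a verification the paper leaves implicit.
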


\begin{proof}
Choose any $v\in M_\mathbb{K}$ archimedean. In this case, the author and Vigny~\cite[Theorem B]{GV_Northcott} proved that
\[\widehat{h}_{f_\eta}(a_\eta)=\int_{S_v^\mathrm{an}}\mu_{f,a,v}.\]
In particular, when $S$, $f$ and $a$ are defined over a number field $\mathbb{K}$, the assumption that $\mu_{f,a,v}>0$ for some archimedean $v\in M_\mathbb{K}$ is satisfied whenever $\widehat{h}_{f_\eta}(a_\eta)>0$. 
To be able to apply Theorem~\ref{tm:principal}, it is thus sufficient to be able to produce a sequence $t_n\in S(\bar{\mathbb{Q}})$ such that $\widehat{h}_{f_{t_n}}(a(t_n))=0$ for all $n$ and such that, if $\mathsf{O}(t_n)$ is the Galois orbit of $t_n$, then $\mathsf{O}(t_n)\cap\mathsf{O}(t_m)=\varnothing$ for all $n\neq m$. This is now classical and it can be done using Montel's Theorem.

We reproduce here the argument for completeness: let $v\in M_\mathbb{K}$ be archimedean. Let $U\subset S_v^\mathrm{an}$ be any euclidean open set such that $\mu_{f,a,v}(U)>0$. By e.g.~\cite[Theorem~9.1]{DeMarco2} or \cite[Proposition-Definition~3.1]{favredujardin}, this is equivalent to the fact that the sequence of rational functions $(a_n)_n$ defined by $a_n(t):=f_t^{\circ n}(a(t))$ is not a normal family on $U$. Up to reducing $U$, by the Implicit Function Theorem, we can assume there exists $N\geq3$ and holomorphic function $z:U\to\mathbb{P}^{1,\mathrm{an}}_v$ such that $N$ is minimal such that$f_t^{\circ n}z(t)=z(t)$ for all $t\in U$, and $z(t)$ is repelling for $f_t$, i.e. $|(f_t)^{\circ n}(z(t))|>1$. By Montel's Theorem, one can define inductively $t_{j+1}\in U\setminus\{t_\ell, \ell\leq j\}$ such that for all $j\geq1$, there is $n(j)\geq 1$ and $a_{n(j)}(t_j)\in \{z(t_j),f_{t_j}(z(t_j)),f_{t_j}^{\circ 2}(z(t_j))\}$. We thus have defined an infinite sequence $(t_n)$ of parameters for which $a(t_n)$ is preperiodic. In particular, we have $\widehat{h}_{f_{t_n}}(a(t_n))=0$ for all $n\geq1$ and the proof is complete.
\end{proof}

\subsection{Sharpness of the assumptions}
We now explain why the assumptions are sharp. When $(\mathcal{X},f,\mathcal{L})$ is a family of polarized endomorphisms of degree $d$ parametrized by a smooth complex quasi-projective curve $S$, we still have
\[\widehat{h}_{f_\eta}(a_\eta)=\int_{S(\C)}\mu_{f,a}=\int_{\mathcal{X}(\C)}\widehat{T}_f\wedge[a(S(\C))]\]
for any regular section $a:S\to\mathcal{X}$ by \cite[Theorem~B]{GV_Northcott}. However when the relative dimension of $\pi:\mathcal{X}\to S$ is $k>1$, the most probable situation is that there are at most finitely many $t\in S(\bar{\mathbb{Q}})$ such that $\widehat{h}_{f_t}(a(t))=0$. 

\begin{lemma}\label{lm:case-empty}
There is a family of polarized endomorphisms $(\mathcal{X},f,\mathcal{L})$ parametrized by $\mathbb{A}^1$ of relative dimension $2$ and a section $a:\mathbb{A}^1\to\mathcal{X}$, all defined over $\mathbb{Q}$, such that 
\[\widehat{h}_{f_\eta}(a_\eta)=1 \quad \text{and} \quad \mathrm{Preper}(f,a):=\{t\in \mathbb{A}^1(\bar{\mathbb{Q}})\, : \ \widehat{h}_{f_t}(a(t))=0\} =\varnothing.\]
\end{lemma}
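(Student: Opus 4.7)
The plan is to construct an explicit product family where the first $\mathbb{P}^1$-factor has a constant section of positive canonical height, while the second factor has a section that varies non-trivially in $t$; additivity of the canonical height for product polarized endomorphisms will then force $\widehat{h}_{f_t}(a(t))$ to stay strictly positive for every $t\in\bar{\mathbb{Q}}$. Explicitly, I would take $S=\mathbb{A}^1$, $\mathcal{X}=\mathbb{P}^1\times\mathbb{P}^1\times S$ with $\pi$ the projection to the last factor, and define $f:\mathcal{X}\to\mathcal{X}$ by $f(x,y,t)=(x^2,y^2,t)$, polarized by $\mathcal{L}=p_1^*\mathcal{O}(1)\otimes p_2^*\mathcal{O}(1)$, where $p_1,p_2$ are the projections to the two $\mathbb{P}^1$ factors. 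Since $f^*\mathcal{L}\simeq \mathcal{L}^{\otimes 2}$, this is a family of polarized endomorphisms of relative dimension $2$ with $d=2$, all defined over $\mathbb{Q}$, and I would take as section $a(t)=(2,t,t)$.

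To check that $\mathrm{Preper}(f,a)=\varnothing$, I would exploit that both $f_t$ and $\mathcal{L}_t$ split as products independently of $t$. By uniqueness of the Call--Silverman canonical height (or, equivalently, by taking the Tate limit coordinate-wise), one gets $\widehat{h}_{f_t}(x,y)=h(x)+h(y)$, where $h$ denotes the standard logarithmic Weil height on $\mathbb{P}^1(\bar{\mathbb{Q}})$ (which coincides with the canonical height for $z\mapsto z^2$). Evaluating at the section,
\[\widehat{h}_{f_t}(a(t))=h(2)+h(t)\geq \log 2>0,\]
so $a(t)$ is not preperiodic for any $t\in\bar{\mathbb{Q}}$.

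Finally, to see $\widehat{h}_{f_\eta}(a_\eta)=1$, I would use the formula of the author and Vigny \cite[Theorem~B]{GV_Northcott}, valid over a quasi-projective curve in any relative dimension, which identifies $\widehat{h}_{f_\eta}(a_\eta)$ with the total mass of $T_{f,a}=\pi_*(\widehat{T}_f\wedge[a(S(\mathbb{C}))])$ at an archimedean place. Because $f$ is a product over $S$, the Green current splits as $\widehat{T}_f=p_1^*\lambda+p_2^*\lambda$, where $\lambda$ is the Haar probability measure on $\{|z|=1\}\subset\mathbb{P}^1(\mathbb{C})$, i.e.\ the equilibrium measure of $z\mapsto z^2$. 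Pulling back by $a$ (via the projection formula $T_{f,a}=a^*\widehat{T}_f$), the first summand vanishes because $p_1\circ a\equiv 2$, and the second produces the Haar measure on $\{|t|=1\}\subset\mathbb{A}^1(\mathbb{C})$, of total mass $1$. Hence $\widehat{h}_{f_\eta}(a_\eta)=1$. There is no genuine obstacle in this argument: the construction is arranged so that the varying coordinate $t$ carries the whole generic height while the constant coordinate $2$ alone already rules out preperiodicity, and every remaining verification reduces to classical facts about the squaring map.
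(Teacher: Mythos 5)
Your proof is correct, and it takes a genuinely different and more elementary route than the paper's. The paper constructs a non-isotrivial family $f_t(z,w)=(z^2+t,w^2+t)$ with the constant section $a(t)=(0,4)$; showing $\mathrm{Preper}(f,a)=\varnothing$ then requires nontrivial information about the Mandelbrot set together with a technical bound of Buff (\cite[Lemma~7]{Buff-PCF-unicritical}) to control the parameters $t$ with $G_t(4)=0$, and the generic height is extracted from the asymptotic $G_t(0)+G_t(4)=\log^+|t|+O(1)$. You instead take an \emph{isotrivial} product family $f(x,y)=(x^2,y^2)$ and a section $a(t)=(2,t)$ that varies only in the second coordinate; emptiness of $\mathrm{Preper}(f,a)$ becomes immediate from $\widehat{h}_{f_t}(a(t))=h(2)+h(t)\ge\log 2>0$, and the identity $\widehat{h}_{f_\eta}(a_\eta)=1$ follows either from the mass of $T_{f,a}=(p_1\circ a)^*\lambda+(p_2\circ a)^*\lambda=\lambda$ via \cite[Theorem~B]{GV_Northcott}, or even more directly from the function-field computation $\widehat{h}_{f_\eta}(a_\eta)=\widehat{h}_{z\mapsto z^2,\eta}(2)+\widehat{h}_{z\mapsto z^2,\eta}(t)=0+1$ (constants have vanishing function-field height, while $t$ has height $1$). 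Your construction exploits precisely that the paper's definition of a family of polarized endomorphisms does not require non-isotriviality, which makes the whole verification reduce to elementary facts about the squaring map and is considerably shorter; the paper's example has the aesthetic merit of being a ``genuine'' non-isotrivial family, but this is not needed for the statement being proved.
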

\begin{proof}
let $f:(\mathbb{P}^1)^2\times\mathbb{A}^1\to(\mathbb{P}^1)^2\times\mathbb{A}^1$ be defined by
\[f_t(z,w)=(z^2+t,w^2+t), \quad (z,w,t)\in\mathbb{A}^1\times\mathbb{A}^1\times\mathbb{A}^1.\]
Define now a section $a:\mathbb{A}^1\to\mathbb{A}^1\times\mathbb{A}^1\times\mathbb{A}^1$ of the canonical projection $\pi:\mathbb{A}^1\times\mathbb{A}^1\times\mathbb{A}^1\to\mathbb{A}^1$ by letting $a(t):=(t,0,4)$ for all $t\in\mathbb{A}^1$. 
Write $|\cdot|:=|\cdot|_\infty$. For any $t\in\mathbb{A}^1$, let $p_t(z):=z^2+t$ we define the 
\[G_{t}(z):=\lim_{n\to\infty}\frac{1}{2^n}\log^+|p_t^{\circ n}(z)|, \quad (t,z)\in\C\times\C.\]
Then $\mu_{f,a}=dd^c_t\left(G_{t}(0)+G_{t}(4)\right)$. Since for $u\in\{0;4\}$ we have
\[G_{t}(u)=\frac{1}{2}\log^+|t|+O(1), \quad \text{as} \ |t|\to\infty,\]
the measure $\mu_{f,a}$ is a probability measure on $\mathbb{A}^{1,\mathrm{an}}$, whence $\widehat{h}_{f_\eta}(a_\eta)=1$.
Also, by an elementary computation, we have
\[h_{f,a}(t)=\widehat{h}_{p_t}(0)+\widehat{h}_{p_t}(4), \quad t\in \mathbb{A}^1(\bar{\mathbb{Q}}).\]
In particular, if $h_{f,a}(t)=0$, then $G_{t}(0)=G_{t}(4)=0$. The condition $G_{t}(0)=0$ implies $|t|\leq 2$ and by~\cite[Lemma~7]{Buff-PCF-unicritical} the condition $G_{t}(4)=0$ implies that
\begin{itemize}
\item either $|t|\leq 2$ and $|t\cdot p_t^{\circ n}(4)|\leq 2$ for all $n\geq0$,
\item or $|t|>2$ and $|t\cdot p_t^{\circ n}(4)|<1$ for all $n\geq0$.
\end{itemize}
The second condition is empty since, for $n=0$, this implies $2<1/4$.
For $n=0$, the first condition implies $|t|\leq 1/2$ and for $n=1$, it gives $|t|\leq 5/32$. In particular, the polynomial $p_t$ has an attracting fixed point and the only case when $\widehat{h}_{f_t}(0)=0$ is the case $t=0$. Finally, for $t=0$, $G_0(4)=\log|4|>0$ ending the proof.
\end{proof}

\medskip

We thus can ask whether the condition of existence of a Zariski dense set of small points is reasonable in families with relative dimension $>1$. Following the proof of Theorem~0.1 of \cite{Dujardin2012} exactly as adapted in \cite{Gauthier-smooth-bif}, we can prove the next proposition.

\begin{proposition}\label{prop:para-distrib}
Let $(\mathcal{X},f,\mathcal{L})$ be a family of polarized endomorphisms of degree $d$ parametrized by a smooth complex quasi-projective variety $S$ and let $q\geq1$ be an integer. Assume $\dim S=qk$, where $k$ is the relative dimension of $\mathcal{X}$. Assume there are $q$ sections $a_1,\ldots,a_q:S\to\mathcal{X}$ with $\mu_{f,\mathfrak{a}}>0$.
Then set
\[\mathrm{Preper}(f,a_1,\ldots,a_q):=\{t\in S(\bar{\mathbb{Q}})\, : \ \widehat{h}_{f_t}(a_1(t))=\cdots=\widehat{h}_{f_t}(a_q(t))=0\}\]
is Zariski dense in $S(\bar{\mathbb{Q}})$. In particular, if $S$, $(\mathcal{X},f,\mathcal{L})$ and $a_1,\ldots,a_q$ are defined over a number field, assumption $2$ of Theorem~\ref{tm:principal} is satisfied.
\end{proposition}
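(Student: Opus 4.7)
The plan is to follow the strategy of Dujardin~\cite{Dujardin2012}, as adapted in \cite{Gauthier-smooth-bif}. Fix an archimedean place $v_0 \in M_\mathbb{K}$ and work on the complex analytic variety $S(\mathbb{C}) = S_{v_0}^{\mathrm{an}}$; since simultaneous preperiodicity is cut out by countably many algebraic equations, a Zariski-dense subset of $\mathrm{Preper}(f, a_1, \ldots, a_q) \cap S(\mathbb{C})$ will descend to a Zariski-dense subset of $\mathrm{Preper}(f, a_1, \ldots, a_q) \subset S(\bar{\mathbb{Q}})$.

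The first step would be a pluripotential-theoretic reduction. The current $T_{f, \mathfrak{a}} = \sum_j T_{f, a_j}$ is a closed positive $(1,1)$-current on $S(\mathbb{C})$ with continuous local potentials (\S~\ref{sec:families}), and its top wedge power $\mu_{f, \mathfrak{a}} = T_{f, \mathfrak{a}}^{qk}$ is a well-defined Monge--Amp\`ere measure which, by Bedford--Taylor theory, assigns no mass to pluripolar sets. Since every proper analytic subset of $S(\mathbb{C})$ is pluripolar, the hypothesis $\mu_{f, \mathfrak{a}} \neq 0$ forces $\mathrm{supp}(\mu_{f, \mathfrak{a}})$ to be Zariski-dense in $S(\mathbb{C})$; it therefore suffices to prove that $\mathrm{Preper}(f, a_1, \ldots, a_q)$ accumulates on every point of $\mathrm{supp}(\mu_{f, \mathfrak{a}})$ for the Euclidean topology.

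The second step is an inductive Montel-type construction. For $j = 0, 1, \ldots, q$ the plan is to build a countable family $\mathcal{S}_j$ of analytic subsets of $S(\mathbb{C})$ of pure codimension $jk$, each contained in $\{t : \widehat{h}_{f_t}(a_\ell(t)) = 0 \text{ for } 1 \leq \ell \leq j\}$, and such that $\bigcup_{\Sigma \in \mathcal{S}_j} \Sigma$ is dense in $\mathrm{supp}(\mu_{f,\mathfrak{a}})$. Starting from $\mathcal{S}_0 = \{S\}$, at a generic $t_0 \in \Sigma \in \mathcal{S}_j$ a Bedford--Taylor slicing of the multilinear expansion of $T_{f,\mathfrak{a}}^{qk}$ is expected to show that $T_{f, a_{j+1}}^k|_\Sigma$ is a non-zero positive measure near $t_0$, so the family $(t \mapsto f_t^{\circ n}(a_{j+1}(t)))_n$ fails to be normal on any neighborhood $U \subset \Sigma$ of $t_0$. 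Applying Montel's theorem to local branches of repelling periodic points, exactly as in the proof of Theorem~\ref{tm:curves}, then yields a pure codimension-$k$ analytic subset of $U$ along which $a_{j+1}(t)$ becomes preperiodic; assembling these local subsets produces $\mathcal{S}_{j+1}$.

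The hard part will be the inductive non-vanishing of $T_{f, a_{j+1}}^k|_\Sigma$: a priori, the ``activity'' mass carried by $\mu_{f, \mathfrak{a}}$ could fail to propagate along the preperiodicity loci already constructed. This will be handled by a careful multilinear expansion of $T_{f, \mathfrak{a}}^{qk}$ and by tracking which mixed Monge--Amp\`ere terms $T_{f, a_{i_1}} \wedge \cdots \wedge T_{f, a_{i_{qk}}}$ carry positive mass near each $\Sigma \in \mathcal{S}_j$, using the continuity of potentials to legitimize the successive slicings. At the final step $j = q$, the elements of $\mathcal{S}_q$ are zero-dimensional, and their countable union is a dense subset of $\mathrm{supp}(\mu_{f,\mathfrak{a}})$ contained in $\mathrm{Preper}(f, a_1, \ldots, a_q)$; combined with the first step, this yields Zariski density in $S(\mathbb{C})$ and hence in $S(\bar{\mathbb{Q}})$, establishing both the proposition and assumption $(2)$ of Theorem~\ref{tm:principal}.
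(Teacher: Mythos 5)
Your proposal takes exactly the route the paper intends: the paper explicitly omits the proof and refers to \cite[Theorem~2.2]{Gauthier-smooth-bif}, which adapts Dujardin's strategy from \cite{Dujardin2012}, and that is precisely the plan you lay out (pluripotential reduction to density of preperiodic loci in $\mathrm{supp}(\mu_{f,\mathfrak{a}})$, then an inductive slicing--Montel construction).

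However, what you flag as ``the hard part'' is left unresolved in a way that would matter if you actually tried to write out the proof, because the key structural input is missing. The fact that makes the multinomial expansion of $T_{f,\mathfrak{a}}^{qk}$ tractable is that $T_{f,a_j}^{k+1}=0$ for each $j$: since the relative dimension of $\mathcal{X}$ is $k$, one has $\widehat{T}_f^{k+1}=0$ on $\mathcal{X}$ (the paper invokes exactly this in its sharpness discussion following the Proposition), and $T_{f,a_j}=\mathfrak{a}_j^*\widehat{T}_f$, so $T_{f,a_j}^{k+1}=\mathfrak{a}_j^*\widehat{T}_f^{k+1}=0$. Under the dimension constraint $\dim S=qk$, every term of the expansion of $(T_{f,a_1}+\cdots+T_{f,a_q})^{qk}$ in which some $T_{f,a_j}$ appears to a power $>k$ therefore vanishes, leaving only
\[
\mu_{f,\mathfrak{a}}=\frac{(qk)!}{(k!)^q}\,T_{f,a_1}^k\wedge\cdots\wedge T_{f,a_q}^k.
\]
This is what converts the hypothesis $\mu_{f,\mathfrak{a}}>0$ into the non-vanishing of the pure mixed Monge--Amp\`ere product; without it, ``tracking which mixed terms carry mass'' has no reason to terminate and the inductive non-vanishing of $T_{f,a_{j+1}}^k$ restricted to the loci already constructed would genuinely be in doubt. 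Once this reduction is made explicit, the inductive step should be phrased more carefully: what must stay positive along $\Sigma\in\mathcal{S}_j$ is $T_{f,a_{j+1}}^k\wedge\cdots\wedge T_{f,a_q}^k|_\Sigma$ (not $T_{f,a_{j+1}}^k|_\Sigma$ alone, which is a current rather than a measure when $j<q-1$), and the mechanism by which the mass propagates is that one approximates $T_{f,a_{j+1}}^k$ near the current $\Sigma$ by normalized integration currents over the codimension-$k$ preperiodicity loci produced by the Montel--repelling-cycle argument, then wedges with $T_{f,a_{j+2}}^k\wedge\cdots\wedge T_{f,a_q}^k$ and passes to the limit using continuity of the potentials. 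That continuity-legitimized exchange of limit and wedge is the true technical content of Dujardin's (and Gauthier's) proof, and it deserves to be named rather than promised.
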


We omit the proof since it copies verbatim that of \cite[Theorem~2.2]{Gauthier-smooth-bif}.

~

Finally, fix $k\geq1$ and $d\geq2$ and let $(\mathcal{X},f,\mathcal{L})$ be a family of polarized endomorphisms of degree $d$, parametrized by a smooth complex quasi-projective variety $S$ with $\dim S>1$, where $\mathcal{X}$ has relative dimension $k$ and let $a_1,\ldots,a_q:S\to\mathcal{X}$ be $q\geq1$ sections. Given a K\"ahler form $\omega$ on $S$ which is cohomologous to $c_1(M)$ with $M$ ample on $S$, \cite[Theorem~B]{GV_Northcott} reads as
\[\int_{S(\C)}\left(T_{f,a_1}+\cdots+T_{f,a_q}\right)\wedge \omega_S^{\dim S-1}=\sum_{j=1}^q\widehat{h}_{f_\eta}(a_{j,\eta}).\]
The hypothesis that $\mu_{f,a,v}>0$ for some archimedean $v\in M_\mathbb{K}$ is thus stronger than only assuming $\sum_{j=1}^q\widehat{h}_{f_\eta}(a_{j,\eta})>0$, which -- by the above formula -- is equivalent to assuming that $T_{f,a_1}+\cdots +T_{f,a_q}>0$.
We can prove the following.

\begin{lemma}
There is a family of $(\mathcal{X},f,\mathcal{L})$ of polarized endomorphisms parametrized by $\mathbb{A}^2$ and a section $a:\mathbb{A}^2\to\mathcal{X}$, all defined over $\mathbb{Q}$ where $\mathcal{X}$ has relative dimension $1$ and such that:
\begin{enumerate}
\item if $\omega_{\mathbb{P}^2}$ is the Fubini-Study form of $\mathbb{P}^2(\C)$, the current $T_{f,a}$ satisfies
\[\int_{\C^2}T_{f,a}\wedge\omega_{\mathbb{P}^2}=1.\]
\item the bifurcation measure $\mu_{f,a}$ vanishes identically,
\item the set $\mathrm{Preper}(f,a)$ is Zariski dense in $\mathbb{A}^2(\bar{\mathbb{Q}})$.
\end{enumerate}
\end{lemma}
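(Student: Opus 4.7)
The plan is to produce the example by pulling back the Mandelbrot family through a projection $\mathbb{A}^2\to\mathbb{A}^1$, so that the bifurcation current becomes a pullback from a curve and hence squares to zero, while still carrying positive mass in the remaining direction. Concretely, I will take $\mathcal{X}:=\mathbb{P}^1\times\mathbb{A}^2$ with $\pi$ the second projection, endow each fiber $\mathbb{P}^1\times\{(s,t)\}$ with the degree two polynomial $f_{s,t}(z)=z^2+t$, and set the marked section to be $a(s,t)=0$. This is obviously a family of polarized endomorphisms of relative dimension $1$ defined over $\mathbb{Q}$.

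First I would compute $T_{f,a}$. Writing $G_t(z):=\lim_n 2^{-n}\log^+|p_t^{\circ n}(z)|$ for $p_t(z)=z^2+t$, an immediate unwinding of the definition in \S\ref{sec:families} gives $T_{f,a}=dd^c G_t(0)$ on $\mathbb{A}^2$, where we view $G_t(0)$ as a function of $(s,t)\in\mathbb{A}^2$ that happens to be independent of $s$. Because $G_t(0)$ depends only on $t$, $T_{f,a}$ involves only $dt\wedge d\bar{t}$, so
\[\mu_{f,a}=T_{f,a}\wedge T_{f,a}=0,\]
which gives (2). For (1), I would use that $T_{f,a}=\pi_0^*\nu$ where $\pi_0:\mathbb{A}^2\to\mathbb{A}^1$ is $(s,t)\mapsto t$ and $\nu:=dd^c G_t(0)$ is the classical bifurcation measure of the quadratic family. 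By the projection formula,
\[\int_{\mathbb{C}^2}T_{f,a}\wedge\omega_{\mathbb{P}^2}=\int_{\mathbb{C}}\nu\cdot (\pi_0)_*\omega_{\mathbb{P}^2}.\]
A direct computation, reducing by the change of variables $s=\sqrt{1+|t|^2}\,\sigma$, shows that for any fixed $t$ one has $\int_{\mathbb{C}_s}\omega_{\mathbb{P}^2}|_{\{t\}\times\mathbb{C}_s}=1$, i.e. $(\pi_0)_*\omega_{\mathbb{P}^2}\equiv 1$. Combined with the classical fact that $\nu$ is a probability measure on $\mathbb{A}^1$ (for instance because $G_t(0)=\tfrac12\log^+|t|+O(1)$ at infinity), this yields $\int_{\mathbb{C}^2}T_{f,a}\wedge\omega_{\mathbb{P}^2}=1$.

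For (3), the canonical height identity $\widehat{h}_{f_{(s,t)}}(a(s,t))=\widehat{h}_{p_t}(0)$ reduces the description of $\mathrm{Preper}(f,a)$ to
\[\mathrm{Preper}(f,a)=\mathbb{A}^1_s\times\{t\in\bar{\mathbb{Q}}\,:\,0\text{ is preperiodic under }p_t\}.\]
The set in the second factor is infinite (it contains all Misiurewicz and all hyperbolic centers with periodic critical orbit) and is classically Zariski dense in $\mathbb{A}^1(\bar{\mathbb{Q}})$; a quick justification via Montel is already supplied in the proof of Theorem~\ref{tm:curves} applied to $(p_t,0)$. Therefore $\mathrm{Preper}(f,a)$ is Zariski dense in $\mathbb{A}^2(\bar{\mathbb{Q}})$.

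The main technical point is the fibered integral $(\pi_0)_*\omega_{\mathbb{P}^2}\equiv 1$; nothing else is delicate, since the vanishing $T_{f,a}^2=0$ and the Zariski density are automatic from the product structure. The whole construction illustrates the expected phenomenon: when $T_{f,a}$ is pulled back from a strictly lower-dimensional base, its top self-intersection vanishes, yet $T_{f,a}+\sum T_{f,a_j}$ can still have a Zariski dense $0$-height locus, so assuming only $\sum_j\widehat{h}_{f_\eta}(a_{j,\eta})>0$ in Theorem~\ref{tm:principal} is strictly weaker than assuming $\mu_{f,\mathfrak{a}}\neq 0$.
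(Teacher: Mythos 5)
Your construction has the right shape, but there is an arithmetic error that makes point (1) fail. You take $a(s,t)=0$ and $p_t(z)=z^2+t$, and write $\nu=dd^c G_t(0)$ with the asymptotics $G_t(0)=\tfrac12\log^+|t|+O(1)$; but this growth rate forces $\nu$ to have total mass $\tfrac12$, not $1$ (the standard Green's function $G_{\mathcal M}(t)=G_t(t)=2G_t(0)$ is the one with $\log|t|$ growth and unit-mass Laplacian). Equivalently, $\widehat h_{f_\eta}(a_\eta)=\lim_n 2^{-n}\deg_t\bigl(p_t^{\circ n}(0)\bigr)=\lim_n 2^{-n}\cdot 2^{n-1}=\tfrac12$, so by the identity $\int_{\C^2}T_{f,a}\wedge\omega_{\mathbb P^2}=\widehat h_{f_\eta}(a_\eta)$ from \cite{GV_Northcott}, your example gives mass $\tfrac12$. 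So "the classical fact that $\nu$ is a probability measure" is a non sequitur from what you just wrote, and as stated your example does not satisfy item (1) of the lemma.

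The fix is to rescale the marked orbit so that it grows like $\log\max\{|s|,|t|\}$ rather than $\tfrac12\log|t|$: for instance take a degree-$4$ polynomial whose marked critical point already lands on something of degree $4^n$ after $n$ iterates, which is exactly what the paper does with $f_{s,t}(z)=\tfrac14z^4-\tfrac23sz^3+\tfrac{s^2}{2}z^2+s^4$ and $a(s,t)=s$ (one checks $f_{s,t}(s)=\tfrac{13}{12}s^4$ so $\widehat h_{f_\eta}(a_\eta)=1$). Aside from that normalization slip, the rest of your argument is sound and in fact structurally identical to the paper's: both examples are pullbacks of a one-parameter dynamical pair through a coordinate projection of $\mathbb A^2$, which is why $T_{f,a}^2$ vanishes. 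Your observation that the vanishing and the Zariski density are automatic from the product structure (infinitely many horizontal lines) is a nice shortcut that bypasses the paper's Bedford--Taylor argument for $\mu_{f,a}=0$ and its appeal to the equidistribution of $\mathrm{Preper}(n,m)$ from \cite{favredujardin}; your projection-formula computation of the mass, once $\nu$ is correctly normalized, similarly avoids the appeal to Siu extension and \cite{BB2}. So once the mass is fixed, your route is correct and genuinely more elementary.
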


The idea behind the proof is that, in relative dimension $k$, the current $\widehat{T}_f^{k+1}$ vanishes identically. In particular, the current $T_{f,a}^{k+1}$ also vanishes.

\begin{proof}
Define a family of degree $4$ polynomials $f:\mathbb{P}^1\times\mathbb{A}^2\to\mathbb{P}^1\times\mathbb{A}^2$ by letting
\[f_{s,t}(z)=\frac{1}{4}z^4-\frac{2}{3}sz^3+\frac{s^2}{2}z^2+s^4, \quad z\in\mathbb{A}^1 \ \text{and} \ (s,t)\in\mathbb{A}^2,\]
and define $a:\mathbb{A}^2\to\mathbb{A}^1\times\mathbb{A}^2$ by $a(s,t)=\left(s,(s,t)\right)$. Let $|\cdot|:=|\cdot|_\infty$.
As in the proof of Lemma~\ref{lm:case-empty}, define $G:\C\times\C^2\to\R_+$ by letting
\[G_{s,t}(z):=\lim_{n\to\infty}\frac{1}{4^n}\log^+|f_{s,t}^{\circ n}(z)|, \ (s,t)\in\C^2, \ z\in\C.\]
We then have $T_{f,a}=dd^cG_{s,t}(s)$ and $\mu_{f,a}=\left(dd^cG_{s,t}(s)\right)^2$.

We now follows arguments from \cite[\S5--6]{favredujardin}.
Let us first justify that $\mu_{f,a}=0$. This follows from Bedford-Taylor theory. Let $g(s,t):=G_{s,t}(s)$ so that the current $T_{f,a}$ is $dd^cg$. As $g\geq0$, we have $g=\lim_{n\to\infty}\max\{g,\frac{1}{n}\}$ and since $\mathrm{supp}(dd^cg)\subset \{g=0\}$ and $\mathrm{supp}(dd^c\max\{g,1/n\})\subset \{g=1/n\}$, we have
\[\mu_{f,a}=\lim_{n\to\infty}dd^cg\wedge dd^c\max\left\{g,\frac{1}{n}\right\}=0.\]
Since $g(s,t)\leq \log^+\max\{|s|,|t|\}+O(1)$ as $\|(s,t)\|\to\infty$, we have 
\[\int_{\C^2}T_{f,a}\wedge\omega_{\mathbb{P}^2}\leq 1.\]
In particular, by Siu's extension Theorem, the trivial extension of $T_{f,a}$ to $\mathbb{P}^2(\C)$ is a closed positive $(1,1)$-current $S$ which decomposes at $\tilde{T}+\alpha[L_\infty]$, where $[L_\infty]$ is the integration current on the lien at infinity and $\tilde{T}\wedge \omega_{\mathbb{P}^2}$ gives no mass to $L_\infty$. But \cite[Theorem~4.2]{BB2}  implies that $\alpha=0$, whence
\[\int_{\C^2}T_{f,a}\wedge\omega_{\mathbb{P}^2}= 1.\]
To prove the last assertion, for $n>m\geq0$, we let
\[\mathrm{Preper}(n,m):=\{(s,t)\in\C^2\, : \ f_{s,t}^{\circ n}(s)=f_{s,t}^{\circ m}(s)\}.\]
For $n>m\geq0$, the set $\mathrm{Preper}(n,m)$ is a (possibly reducible) plane curve of degree $4^n$ which is defined over $\mathbb{Q}$. In particular, the set $\mathrm{Preper}(n,m)(\bar{\mathbb{Q}})$ is infinite. Also \cite[Theorem~1]{favredujardin} implies that for any sequence $\{m(n)\}_n$ with $0\leq m(n)<n$,
\[T_{f,a}=\lim_{n\to\infty}\frac{1}{4^n}[\mathrm{Preper}(n,m(n))]\]
in the weak sense of currents. As $T_{f,a}=dd^cg$ where $g$ is continuous, the set $\mathrm{Preper}(f,a)$ is Zariski dense.
\end{proof}

\subsection{In the moduli space of degree $d$ rational maps}\label{sec:PCF}

We finally focus on the case of the moduli space $\mathcal{M}_d$ of degree $d$ rational maps and we give a very quick proof of Theorem~\ref{cor:ratd}. The variety $\mathcal{M}_d$ is the space of $\mathrm{PGL}(2)$ conjugacy classes of rational maps of degree $d$. By~\cite{Silverman-Space-rat}, the variety $\mathcal{M}_d$ is irreducible, affine has dimension $2d-2$, and is defined over $\mathbb{Q}$.

\medskip

The good setting to apply Theorem~\ref{tm:principal} is actually the \emph{critically marked moduli space} $\mathcal{M}_d^{\mathrm{cm}}$. As in \cite{buffepstein}, define first
\[\mathrm{Rat}_d^{\mathrm{cm}}:=\{(f,c_1,\ldots,c_{2d-2})\in\mathrm{Rat}_d\times(\mathbb{P}^1)^{2d-2}\, :\ \mathrm{Crit}(f)=\sum_j[c_j]\},\]
where $\mathrm{Crit}(f)$ stands for the \emph{critical divisor} of $f$. The space $\mathrm{Rat}_d^\mathrm{cm}$ is an quasi-projective variety of dimension $2d+1$ which is a finite branched cover of $\mathrm{Rat}_d$. We then define the  \emph{critically marked moduli space} $\mathcal{M}_d^{\mathrm{cm}}$ as
\[\mathcal{M}_d^\mathrm{cm}:=\mathrm{Rat}_d^\mathrm{cm}/\mathrm{PGL}(2),\]
where $\mathrm{PGL}(2)$ acts by $\phi\cdot (f,c)=(\phi\circ f\circ \phi^{-1},\phi(c_1),\ldots,\phi(c_{2d-2}))$, and the quotient is geometric in the sense of Invariant Geometric Theory as in \cite{Silverman-Space-rat}. Again, it is an irreducible affine variety defined over $\mathbb{Q}$. Moreover, we can directly apply Theorem~\ref{tm:principal} to the good height function $h_{\mathrm{Crit}}:\mathcal{M}_d^\mathrm{cm}\to\R_+$ defined by
\[h_{\mathrm{Crit}}(f,c_1,\ldots,c_{2d-2})=\sum_{j=1}^{2d-2}\widehat{h}_f(c_j),\]
for all $(f,c_1,\ldots,c_{2d-2})\in \mathcal{M}_d^\mathrm{cm}(\bar{\mathbb{Q}})$. Indeed, we have a natural map $f:(\mathbb{P}^1)^{2d-2}\times\mathcal{M}_d^\mathrm{cm}\to(\mathbb{P}^1)^{2d-2}\times\mathcal{M}_d^\mathrm{cm}$ together with a section $\mathfrak{c}$ defined as 
\[\mathfrak{c}:\{(f,c_1\ldots,c_{2d-2})\}\mapsto \left((c_1,\ldots,c_{2d-2}),\{(f,c_1\ldots,c_{2d-2})\}\right).\] The current $T_{f^{[2d-2]},\mathfrak{c}}$ is then the bifurcation current $T_\bif$ of the family.

~

We now justify quickly why we are in the domain of application of Theorem~\ref{tm:principal}. For any irreducible subvariety $V\subset\mathcal{M}_d$, the measure $\mu_{\mathrm{bif},V}:=T_{\mathrm{bif}}^{\dim V}$ is non zero if and only if $V$ does not coincide with the curve $\mathcal{L}_d$ of flexible Latt\`es maps, by \cite[Lemma~6.8]{GOV2}. Here $\mathcal{L}_d$ is, when $d=N^2$, the family of maps induced by the multiplication by $N$ on a non-isotrivial elliptic surface $\mathcal{E}\to S$. 
In particular, $T_\bif^{2d-2}>0$ on $\mathcal{M}_d^{\mathrm{cm}}$.

\medskip

As $\dim\mathcal{M}_d^\mathrm{cm}=2d-2$, Proposition~\ref{prop:para-distrib} implies the set of $h_{f,\mathrm{crit}}$-small points form a Zariski dense subset of $\mathcal{M}_d^\mathrm{cm}(\bar{\mathbb{Q}})$. In particular, we are in position to apply Theorem~\ref{tm:principal} in the family $\mathcal{M}_d^{\mathrm{cm}}$. To conclude the proof of Theorem~\ref{cor:ratd}, we just need to recall that
\begin{enumerate}
\item the canonical projection $p:\mathcal{M}_d^\mathrm{cm}\to\mathcal{M}_d$
is a finite branched cover,
\item a conjugacy class $\{f\}$ is post-critically finite (PCF) iff and only if $\{f,c_1,\ldots,c_{2d-2}\}$ is $h_{f,\mathrm{crit}}$-small,
\item the bifurcation measures $\mu_\bif$ and $\mu_{\bif,\mathrm{cm}}$ respectively of $\mathcal{M}_d(\C)$ and of $\mathcal{M}_d^{\mathrm{cm}}(\C)$ are related by $\mu_{\bif,\mathrm{cm}}=p^*(\mu_\bif)$.
\end{enumerate}
In particular, we deduce that for any sequence $F_n\subset\mathcal{M}_d(\bar{\mathbb{Q}})$ of Galois-invariant finite sets of post-critically finite parameters such that
\[\frac{\# (F_n\cap H)}{\# F_n}\to0, \quad \text{as} \ n\to\infty,\]
for any hypersurface $H$ of $\mathcal{M}_d$ which is defined over $\mathbb{Q}$, for any place $v\in M_\mathbb{Q}$ the sequence of probability measures $\frac{1}{\# F_n}\sum_{\{f\}\in F_n}\delta_{\{f\}}$ on $\mathcal{M}_{d,v}^\mathrm{an}$ converges weakly to $\mathrm{vol}(\mu_\bif)^{-1}\mu_{\bif,v}$ in the weak sense of probability measures on $\mathcal{M}_{d,v}^\mathrm{an}$.

\bibliographystyle{short}
\bibliography{biblio}

\begin{thebibliography}{BGPRLS}

\bibitem[Bi]{Bilu}
Yuri Bilu.
\newblock Limit distribution of small points on algebraic tori.
\newblock {\em Duke Math. J.}, 89(3):465--476, 1997.

\bibitem[Bu]{Buff-PCF-unicritical}
Xavier Buff.
\newblock On postcritically finite unicritical polynomials.
\newblock {\em New York J. Math.}, 24:1111--1122, 2018.

\bibitem[BB1]{BB1}
Giovanni Bassanelli and Fran{\c{c}}ois Berteloot.
\newblock Bifurcation currents in holomorphic dynamics on {$\mathbb{P}^k$}.
\newblock {\em J. Reine Angew. Math.}, 608:201--235, 2007.

\bibitem[BB2]{BB2}
Giovanni Bassanelli and Fran{\c{c}}ois Berteloot.
\newblock Distribution of polynomials with cycles of a given multiplier.
\newblock {\em Nagoya Math. J.}, 201:23--43, 2011.

\bibitem[BB3]{Berman-Boucksom}
Robert Berman and S\'{e}bastien Boucksom.
\newblock Growth of balls of holomorphic sections and energy at equilibrium.
\newblock {\em Invent. Math.}, 181(2):337--394, 2010.

\bibitem[BD]{BD-unlikely}
Matthew Baker and Laura DeMarco.
\newblock Preperiodic points and unlikely intersections.
\newblock {\em Duke Math. J.}, 159(1):1--29, 2011.

\bibitem[BDM]{BD}
Matthew Baker and Laura De~Marco.
\newblock Special curves and postcritically finite polynomials.
\newblock {\em Forum Math. Pi}, 1:e3, 35, 2013.

\bibitem[BE1]{Boucksom-Eriksson}
S\'{e}bastien Boucksom and Dennis Eriksson.
\newblock Spaces of norms, determinant of cohomology and {F}ekete points in
  non-{A}rchimedean geometry.
\newblock {\em Adv. Math.}, 378:107501, 124, 2021.

\bibitem[BE2]{buffepstein}
Xavier Buff and Adam~L. Epstein.
\newblock Bifurcation measure and postcritically finite rational maps.
\newblock In {\em Complex dynamics : families and friends / edited by Dierk
  Schleicher}, pages 491--512. A K Peters, Ltd., Wellesley, Massachussets,
  2009.

\bibitem[BG]{bombieri-gubler}
Enrico Bombieri and Walter Gubler.
\newblock {\em Heights in {D}iophantine geometry}, volume~4 of {\em New
  Mathematical Monographs}.
\newblock Cambridge University Press, Cambridge, 2006.

\bibitem[BGPRLS]{BGPRLS}
Jos\'{e}~Ignacio Burgos~Gil, Patrice Philippon, Juan Rivera-Letelier, and
  Mart\'{\i}n Sombra.
\newblock The distribution of {G}alois orbits of points of small height in
  toric varieties.
\newblock {\em Amer. J. Math.}, 141(2):309--381, 2019.

\bibitem[BR1]{Baker-Rumely}
Matthew~H. Baker and Robert Rumely.
\newblock Equidistribution of small points, rational dynamics, and potential
  theory.
\newblock {\em Ann. Inst. Fourier (Grenoble)}, 56(3):625--688, 2006.

\bibitem[BR2]{BR-book}
Matthew Baker and Robert Rumely.
\newblock {\em Potential theory and dynamics on the {B}erkovich projective
  line}, volume 159 of {\em Mathematical Surveys and Monographs}.
\newblock American Mathematical Society, Providence, RI, 2010.

\bibitem[CD]{CL-D-currents}
Antoine {Chambert-Loir} and Antoine {Ducros}.
\newblock {Formes diff{\'e}rentielles r{\'e}elles et courants sur les espaces
  de Berkovich}.
\newblock {\em arXiv e-prints}, page 124 pages, April 2012.

\bibitem[CL1]{ACL}
Antoine Chambert-Loir.
\newblock Mesures et \'equidistribution sur les espaces de {B}erkovich.
\newblock {\em J. Reine Angew. Math.}, 595:215--235, 2006.

\bibitem[CL2]{ACL2}
Antoine Chambert-Loir.
\newblock Heights and measures on analytic spaces. {A} survey of recent
  results, and some remarks.
\newblock In {\em Motivic integration and its interactions with model theory
  and non-{A}rchimedean geometry. {V}olume {II}}, volume 384 of {\em London
  Math. Soc. Lecture Note Ser.}, pages 1--50. Cambridge Univ. Press, Cambridge,
  2011.

\bibitem[CLT]{CL-T}
Antoine Chambert-Loir and Amaury Thuillier.
\newblock Mesures de {M}ahler et \'{e}quidistribution logarithmique.
\newblock {\em Ann. Inst. Fourier (Grenoble)}, 59(3):977--1014, 2009.

\bibitem[CS]{CS-height}
Gregory~S. Call and Joseph~H. Silverman.
\newblock Canonical heights on varieties with morphisms.
\newblock {\em Compositio Math.}, 89(2):163--205, 1993.

\bibitem[De]{DeMarco2}
Laura DeMarco.
\newblock Dynamics of rational maps: {L}yapunov exponents, bifurcations, and
  capacity.
\newblock {\em Math. Ann.}, 326(1):43--73, 2003.

\bibitem[Du]{Dujardin2012}
Romain Dujardin.
\newblock The supports of higher bifurcation currents.
\newblock {\em Ann. Fac. Sci. Toulouse Math. (6)}, 22(3):445--464, 2013.

\bibitem[DF]{favredujardin}
Romain Dujardin and Charles Favre.
\newblock Distribution of rational maps with a preperiodic critical point.
\newblock {\em Amer. J. Math.}, 130(4):979--1032, 2008.

\bibitem[DMWY]{DeMarco-Wang-Ye}
Laura De~Marco, Xiaoguang Wang, and Hexi Ye.
\newblock Bifurcation measures and quadratic rational maps.
\newblock {\em Proc. Lond. Math. Soc. (3)}, 111(1):149--180, 2015.

\bibitem[DO]{DeMarco-Okuyama}
Laura DeMarco and Y\^{u}suke Okuyama.
\newblock Discontinuity of a degenerating escape rate.
\newblock {\em Conform. Geom. Dyn.}, 22:33--44, 2018.

\bibitem[FG1]{favregauthier}
C.~Favre and T.~Gauthier.
\newblock Distribution of postcritically finite polynomials.
\newblock {\em Israel Journal of Mathematics}, 209(1):235--292, 2015.

\bibitem[FG2]{book-unlikely}
Charles Favre and Thomas Gauthier.
\newblock Unlikely intersection for polynomial dynamical pairs, 2019.
\newblock preprint.

\bibitem[FRL]{FRL}
C.~Favre and J.~Rivera-Letelier.
\newblock Equidistribution quantitative des points de petite hauteur sur la
  droite projective.
\newblock {\em Math. Ann.}, 335(2):311--361, 2006.

\bibitem[{Ga}]{Gauthier-smooth-bif}
Thomas {Gauthier}.
\newblock {Dynamical pairs with an absolutely continuous bifurcation measure}.
\newblock {\em Ann. Fac. Sci. Toulouse Math. (6), to appear}, 2018.

\bibitem[Gu]{Gubler-local}
Walter Gubler.
\newblock Local heights of subvarieties over non-{A}rchimedean fields.
\newblock {\em J. Reine Angew. Math.}, 498:61--113, 1998.

\bibitem[GH]{Gao-Habegger}
Ziyang Gao and Philipp Habegger.
\newblock Heights in families of abelian varieties and the geometric
  {B}ogomolov conjecture.
\newblock {\em Ann. of Math. (2)}, 189(2):527--604, 2019.

\bibitem[GOV]{GOV2}
Thomas Gauthier, Y\^{u}suke Okuyama, and Gabriel Vigny.
\newblock Approximation of non-archimedean {L}yapunov exponents and
  applications over global fields.
\newblock {\em Trans. Amer. Math. Soc.}, 373(12):8963--9011, 2020.

\bibitem[GV]{GV_Northcott}
Thomas Gauthier and Gabriel Vigny.
\newblock The {G}eometric {D}ynamical {N}orthcott and {B}ogomolov {P}roperties,
  2019.
\newblock ArXiv e-print.

\bibitem[HS]{Silvermandiophantine}
Marc Hindry and Joseph~H. Silverman.
\newblock {\em Diophantine geometry}, volume 201 of {\em Graduate Texts in
  Mathematics}.
\newblock Springer-Verlag, New York, 2000.
\newblock An introduction.

\bibitem[K]{Kuhne}
Lars {K{\"u}hne}.
\newblock {Equidistribution in Families of Abelian Varieties and Uniformity}.
\newblock {\em arXiv e-prints}, January 2021.

\bibitem[L]{Laz}
Robert Lazarsfeld.
\newblock {\em Positivity in algebraic geometry. {I}}, volume~48 of {\em
  Ergebnisse der Mathematik und ihrer Grenzgebiete. 3. Folge. A Series of
  Modern Surveys in Mathematics [Results in Mathematics and Related Areas. 3rd
  Series. A Series of Modern Surveys in Mathematics]}.
\newblock Springer-Verlag, Berlin, 2004.
\newblock Classical setting: line bundles and linear series.

\bibitem[MY]{mavraki-ye-quasiadelic}
Niki~Myrto Mavraki and Hexi Ye.
\newblock Quasi-adelic measures and equidistribution on $\mathbb{P}^1$, 2017.

\bibitem[R]{rumely}
Robert Rumely.
\newblock On {B}ilu's equidistribution theorem.
\newblock In {\em Spectral problems in geometry and arithmetic ({I}owa {C}ity,
  {IA}, 1997)}, volume 237 of {\em Contemp. Math.}, pages 159--166. Amer. Math.
  Soc., Providence, RI, 1999.

\bibitem[RLV]{Rumely-existence}
Robert Rumely, Chi~Fong Lau, and Robert Varley.
\newblock Existence of the sectional capacity.
\newblock {\em Mem. Amer. Math. Soc.}, 145(690):viii+130, 2000.

\bibitem[S]{Silverman-Space-rat}
Joseph~H. Silverman.
\newblock The space of rational maps on {$\mathbf{P}^1$}.
\newblock {\em Duke Math. J.}, 94(1):41--77, 1998.

\bibitem[SUZ]{Ullmo-Zhang}
L.~Szpiro, E.~Ullmo, and S.~Zhang.
\newblock \'{E}quir\'{e}partition des petits points.
\newblock {\em Invent. Math.}, 127(2):337--347, 1997.

\bibitem[T]{Thuillier}
Amaury Thuillier.
\newblock {\em Th\'eorie du potentiel sur les courbes en g\'eom\'etrie
  analytique non archim\'edienne : applications \`a la th\'eorie d'Arakelov}.
\newblock PhD thesis, Universit\'e Rennes 1, 2005.
\newblock under the direction of Antoine Chambert-Loir.

\bibitem[U]{Ullmo-Bogomolov}
Emmanuel Ullmo.
\newblock Positivit\'{e} et discr\'{e}tion des points alg\'{e}briques des
  courbes.
\newblock {\em Ann. of Math. (2)}, 147(1):167--179, 1998.

\bibitem[Y]{yuan}
Xinyi Yuan.
\newblock Big line bundles over arithmetic varieties.
\newblock {\em Invent. Math.}, 173(3):603--649, 2008.

\bibitem[YZ]{YZ-adelic}
Xinyi Yuan and Shou-Wu Zhang.
\newblock Adelic line bundles over quasi-projective varieties, 2021.

\bibitem[Zha1]{Zhang-positivity}
Shouwu Zhang.
\newblock Positive line bundles on arithmetic varieties.
\newblock {\em J. Amer. Math. Soc.}, 8(1):187--221, 1995.

\bibitem[Zha2]{Zhang-small}
Shouwu Zhang.
\newblock Small points and adelic metrics.
\newblock {\em J. Algebraic Geom.}, 4(2):281--300, 1995.

\bibitem[Zha3]{Zhang-Bogomolov}
Shou-Wu Zhang.
\newblock Equidistribution of small points on abelian varieties.
\newblock {\em Ann. of Math. (2)}, 147(1):159--165, 1998.

\end{thebibliography}

\end{document}